\newtheorem{theorem}{Theorem}[section]
\newtheorem*{theorem*}{Theorem}
\newtheorem{corollary}[theorem]{Corollary}
\newtheorem{lemma}[theorem]{Lemma}
\newtheorem{proposition}[theorem]{Proposition}
\newtheorem{claim}[theorem]{Claim}
\theoremstyle{definition}
\newenvironment{remark}[1][Remark]{\begin{trivlist}
\item[\hskip \labelsep {\bfseries #1}]}{\end{trivlist}}
\newcommand{\rr}{\mathbb{R}}
\newcommand{\nn}{\mathbb{N}}
\newcommand{\ee}{\varepsilon}
\newcommand{\hhh}{\mathcal{H}}
\newcommand{\supp}{\mathrm{supp}}
\newcommand{\cat}{^\smallfrown}
\newcommand{\ord}{\textbf{Ord}}
\newcommand{\gr}{\Gamma}
\newcommand{\pr}{\mathbb{P}}
\newcommand{\tens}{X\hat{\otimes}_\ee Y}
\begin{document}

\title[Szlenk index of injective tensor products]{The Szlenk index of injective tensor products and convex hulls}
\author{Ryan M. Causey}

\address{Department of Mathematics\\ Miami University\\
Oxford, OH 45056\\ U.S.A.} \email{causeyrm@miamioh.edu}

\begin{abstract} Given any Banach space $X$ and any weak*-compact subset $K$ of $X^*$, we compute the Szlenk index of the weak*-closed, convex hull of $K$ as a function of the Szlenk index of $K$. Also as an application, we compute the Szlenk index of any injective tensor product of two operators. In particular, we compute the Szlenk index of an injective tensor product $\tens$ in terms of $Sz(X)$ and $Sz(Y)$.   As another application, we give a complete characterization of those ordinals which occur as the Szlenk index of a Banach space, as well as those ordinals which occur as the Bourgain $\ell_1$ or $c_0$ index of a Banach space.

\end{abstract}

\maketitle

\section{introduction}

Since Szlenk introduced his index \cite{Szlenk} to prove the non-existence of a separable, reflexive Banach space which is universal for the class of separable, reflexive Banach spaces, the Szlenk index has become an important tool in Banach space theory. For a survey of these results, we refer the reader to \cite{Lancien2}.  One remarkable property of the Szlenk index is that it perfectly determines the isomorphism classes of separable $C(K)$ spaces, which can be seen by combining the results of Bessaga and Pe\l czy\'{n}ski \cite{BP}, Milutin \cite{Milutin}, and Samuel \cite{Samuel}. In the sequel, we shall use $w^*$ in place of weak$^*$ to refer to the weak$^*$ topology on a dual Banach space.  The Szlenk index is also an important tool regarding asymptotically uniformly smooth and $w^*$-UKK renormings \cite{KOS}, and $\xi$-$w^*$-UKK renormings of separable Banach spaces \cite{LPR}.  For these questions, the relationship between the Szlenk index of a given set and the Szlenk index of its $w^*$-closed, convex hull is important.  The content of this work provides a sharp result regarding the relationship between these two indices.

Let us recall the definition of the Szlenk derivation and the Szlenk index.  Given a Banach space $X$, a $w^*$-compact subset $K$ of $X^*$, and $\ee>0$, we let $s_\ee(K)$ consist of all those $x^*\in K$ such that for all $w^*$-neighborhoods $V$ of $x^*$, $\text{diam}(V\cap K)>\ee$.    We define the transfinite derived sets by $s^0_\ee(K)=K$, $s^{\xi+1}_\ee(K)=s_\ee(s^\xi_\ee(K))$, and $s^\xi_\ee(K)=\cap_{\zeta<\xi}s^\zeta_\ee(K)$ when $\xi$ is a limit ordinal.  We let $Sz_\ee(K)=\min\{\xi: s_\ee^\xi(K)=\varnothing\}$ if this class of ordinals is non-empty, and we write $Sz_\ee(K)=\infty $ otherwise. We agree to the convention that $\xi<\infty$ for any ordinal $\xi$.   We let $Sz(K)=\sup_{\ee>0}Sz_\ee(K)$, with the convention that the supremum is $\infty$ if $Sz_\ee(K)=\infty$ for some $\ee>0$.  Last, we let $Sz(X)=Sz(B_{X^*})$ for a Banach space $X$.     Given an operator $A:X\to Y$, we let $Sz(A)=Sz(A^*B_{Y^*})$.

We also recall the definition of the Cantor-Bendixson derivative of a subset of a topological space.  If $K$ is a topological space and $L\subset K$, we let $L'$ consist of all members of $L$ which are not isolated in $L$.   We define the transfinite derived sets $L^\xi$ for all ordinals $\xi$ as above: $L^0=L$, $L^{\xi+1}=(L^\xi)'$, and $L^\xi=\cap_{\zeta<\xi}L^\zeta$ when $\xi$ is a limit ordinal.    We let $i(L)$ denote the minimum $\xi$ such that $L^\xi=\varnothing$ if such a $\xi$ exists, and $i(L)=\infty$ otherwise.   We recall that $L$ is \emph{scattered} if and only if $i(L)<\infty$.

Throughout, $\ord$ denotes the class of ordinal numbers, $\nn=\{1, 2, \ldots\}$, $\nn_0=\{0\}\cup \nn$. We denote the first infinite ordinal by $\omega$ and the first uncountable ordinal by $\omega_1$.  Recall that a gamma number is an ordinal which is greater than the sum of any two lesser ordinals.   Of course, $0$ is a gamma number.  The non-zero gamma numbers are precisely the ordinals of the form $\omega^\xi$ for some $\xi$ \cite{Monk}.   Given an ordinal $\xi$, we let $\Gamma(\xi)$ denote the minimum gamma number which is not less than $\xi$.  Since $\omega^\xi\geqslant \xi$ for any ordinal $\xi$, this minimum exists.   For completeness, we agree that $\Gamma(\infty)=\infty$.  For the convenience, we will also agree to call $\infty$ a gamma number.   

The main tool of this work is the following.  

\begin{theorem} Let $X$ be any Banach space, $K\subset X^*$ $w^*$-compact, and let $L=\overline{\emph{co}}^*(K)$.  Then $Sz(L)=\Gamma(Sz(K))$.   
\label{theorem}
\end{theorem}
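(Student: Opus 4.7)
The plan is to prove the equality by establishing matching inequalities, with the structural fact that $Sz$ of any $w^*$-compact convex set is a gamma number (or $\infty$) serving as the bridge. For the lower bound $Sz(L) \geq \Gamma(Sz(K))$, monotonicity of the Szlenk derivation under inclusion gives $Sz(K) \leq Sz(L)$ immediately from $K \subseteq L$. Combined with the structural fact applied to $L$, this forces $Sz(L) \geq \Gamma(Sz(K))$ since $\Gamma(Sz(K))$ is by definition the least gamma number that is at least $Sz(K)$. To prove the structural fact, I would establish, for any $w^*$-compact convex $M$ and ordinals $\alpha, \beta$, a convexity-driven estimate of the form: if $s^\alpha_\ee(M)$ and $s^\beta_\ee(M)$ are both nonempty, then $s^{\alpha + \beta}_{\ee/2}(M)$ is also nonempty. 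This comes from concatenating an $\ee$-slicing tree of height $\alpha$ on top of one of height $\beta$ using Hahn--Banach separation, with the price of a factor $1/2$ absorbed into the convex combinations. Iterating, $\sup_\ee Sz_\ee(M)$ is closed under ordinal addition and hence a gamma number.

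For the upper bound $Sz(L) \leq \Gamma(Sz(K))$, write $\Gamma(Sz(K)) = \omega^\xi$. I would prove, by transfinite induction on $\xi$, an inclusion of the form
$$s^{\omega^\xi}_{\ee}(L) \subseteq \overline{\text{co}}^*\bigl(s^{\omega^\xi}_{f(\ee)}(K)\bigr)$$
for a suitable $f(\ee) > 0$. Since $Sz(K) \leq \omega^\xi$ means the right side is empty, this forces $Sz_\ee(L) \leq \omega^\xi$ for every $\ee > 0$, hence $Sz(L) \leq \omega^\xi$. The base case $\xi = 0$ (where $\omega^\xi = 1$) amounts to: any $x^* \in s_\ee(L)$ lies in the $w^*$-closed convex hull of those points in $K$ witnessing sufficient separation near $x^*$, which follows from Hahn--Banach plus the $w^*$-compactness of $K$. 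The successor step $\xi \mapsto \xi + 1$ uses $\omega^{\xi+1} = \sup_n \omega^\xi \cdot n$, iteratively applies the inductive hypothesis, and loses a controlled factor in $\ee$ at each iteration while gaining $\omega^\xi$ in the ordinal index; the limit step is handled by taking intersections across $\zeta < \xi$.

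The main obstacle is the upper-bound inclusion together with careful bookkeeping of how $f(\ee)$ degrades along the transfinite induction. Each use of convexity requires averaging finitely many $\ee$-separated slicings in $K$, so $f(\ee)$ must shrink in a regulated way to ensure the induction closes at the correct ordinal $\omega^\xi$ rather than overshooting. The cleanest framework I would use is the Schreier-family picture: elements of $s^{\omega^\xi}_\ee(L)$ correspond to trees indexed by $\mathcal{S}_\xi$ with $\ee$-separated nodes in $L$, and the combinatorial task is to extract from each such tree in $L$ a corresponding tree of $f(\ee)$-separated nodes in $K$, paying only a bounded multiplicative factor and using the fact that every node of an $L$-tree is a $w^*$-limit of convex combinations from $K$. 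Once this extraction is in place, both inequalities combine to give $Sz(L) = \Gamma(Sz(K))$.
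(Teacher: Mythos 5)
Your lower bound is exactly the paper's: monotonicity gives $Sz(K)\leqslant Sz(L)$, and the fact that the Szlenk index of a non-empty, convex, $w^*$-compact set is a gamma number (proved, as you suggest, by a convexity/homogeneity argument showing $Sz_\ee(M)>\xi$ implies $Sz_{\ee/2}(M)>\xi\cdot 2$) upgrades this to $\Gamma(Sz(K))\leqslant Sz(L)$. The upper bound, however, has a fatal gap: the inclusion you propose to prove by induction, $s^{\omega^\xi}_{\ee}(L)\subseteq \overline{\text{co}}^{w^*}\bigl(s^{\omega^\xi}_{f(\ee)}(K)\bigr)$, is false already at the base case $\xi=0$. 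Take $X=c_0$ and $K=\{0\}\cup\{e_n^*:n\in\nn\}\subset \ell_1$. Each $e_n^*$ is $w^*$-isolated in $K$, so $s_{\delta}(K)=\{0\}$ for every small $\delta>0$ and the right-hand side is $\{0\}$; but $\tfrac{1}{2}e_1^*\in s_\ee(L)$ for every $\ee<\tfrac{1}{2}$, since $\tfrac{1}{2}e_1^*+\tfrac{1}{2}e_m^*\in L$ converges $w^*$ to $\tfrac{1}{2}e_1^*$ while remaining at norm distance $\tfrac{1}{2}$ from it. The Szlenk derivation of a convex hull is simply not contained in the convex hull of any derivation of the generating set --- that failure is precisely the mechanism by which convexification raises the index from $Sz(K)$ up to $\Gamma(Sz(K))$ --- so no pointwise inclusion of this type can drive a transfinite induction.

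The viable route (and the paper's) is the one you gesture at in your closing paragraph, but the step you defer with ``once this extraction is in place'' is the entire content of the proof. One works in the predual: $Sz(M)>\omega^\xi$ is characterized by the existence of a weakly null tree $(x_\tau)$ in $B_X$, indexed by a $B$-tree of order $\omega^\xi$, each branch of which is normed up to $\ee$ by a single functional in $M$. Applying this to $L$, and using Hahn--Banach after replacing $K$ by $S_\mathbb{F}K$ so that suprema of real parts over $L$ and over $K$ agree, one only learns that every convex combination along every branch is $\ee$-normed by some element of $K$; the hard direction is to pass from this back to a pruned subtree each of whose branches is normed nodewise by a single element of $K$. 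That passage requires a replacement for the repeated averages hierarchy valid for arbitrary (possibly uncountable) ordinals and nonseparable spaces, together with a genuine combinatorial dichotomy (every bounded function on the tree is either $\ee$-large or $\ee$-small), which is the content of the paper's Sections 4 and 6. Note also that the Schreier families $\mathcal{S}_\xi$ you invoke exist only for countable $\xi$, so even the indexing objects must be rebuilt before this strategy can address the general statement.
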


Using the geometric version of the Hahn-Banach theorem (or in some cases the Krein-Milman theorem), Theorem \ref{theorem} will allow us to  compute the Szlenk index of a given set $L$ by the Szlenk index of a subset of $L$ whose $w^*$-closed, convex hull is $L$ and whose Szlenk index is easy to compute. Our first example of such a phenomenon is the following.

\begin{theorem} Let $K$ be any compact, Hausdorff topological space.  Then $Sz(C(K))=\Gamma(i(K))$.

\label{maintheorem}
\end{theorem}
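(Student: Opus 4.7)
The plan is to realize $B_{C(K)^*}$ as the $w^*$-closed convex hull of a simple $w^*$-compact set whose Szlenk index is directly controlled by $i(K)$, and then invoke Theorem \ref{theorem}. Let $\tau: K \to C(K)^*$ denote the Dirac embedding $\tau(t) = \delta_t$ and set
\[
K_0 = \tau(K) \cup (-\tau(K)) \subset B_{C(K)^*}.
\]
Both $\tau(K)$ and $-\tau(K)$ are $w^*$-compact (as continuous images of $K$ under $t \mapsto \pm \delta_t$), so $K_0$ is $w^*$-compact. By the Krein-Milman theorem together with the classical identification of the extreme points of $B_{C(K)^*}$ as $\{\pm \delta_t : t \in K\}$, one has $\overline{\mathrm{co}}^*(K_0) = B_{C(K)^*}$, and Theorem \ref{theorem} then gives
\[
Sz(C(K)) = Sz(B_{C(K)^*}) = \Gamma(Sz(K_0)).
\]
Thus the problem reduces to showing $Sz(K_0) = i(K)$.

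To compute $Sz(K_0)$, I would show that for every $\ee \in (0, 2)$ the Szlenk derivation $s_\ee$ on $K_0$ coincides with the Cantor-Bendixson derivation, while $Sz_\ee(K_0) \leq 1$ for $\ee \geq 2$ (since $\mathrm{diam}(K_0) = 2$). Two observations drive this. First, testing $w^*$-convergence against the constant function $f \equiv 1$ prevents any net in $-\tau(K)$ from converging $w^*$ to a point of $\tau(K)$ (and vice versa); together with the fact that $\tau$ restricts to a $w^*$-homeomorphism of $K$ onto $\tau(K)$, this gives $(K_0)' = \tau(K') \cup (-\tau)(K')$, so a point of $K_0$ is $w^*$-isolated in $K_0$ exactly when the corresponding point of $K$ is isolated. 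Second, $\|\delta_s - \delta_t\| = 2$ for all distinct $s, t \in K$, so any $w^*$-neighborhood of a non-isolated $\delta_t \in K_0$ meets $K_0$ in some $\delta_s$ with $s \neq t$, forcing the norm-diameter to be at least $2 > \ee$. Combining these yields $s_\ee(K_0) = (K_0)'$ for $\ee \in (0, 2)$, and a straightforward transfinite induction gives
\[
s_\ee^\xi(K_0) = \tau(K^\xi) \cup (-\tau)(K^\xi)
\]
for every ordinal $\xi$. Hence $Sz_\ee(K_0) = i(K)$ for $\ee \in (0, 2)$, and taking $\sup_\ee$ gives $Sz(K_0) = i(K)$ (the degenerate case $K = \varnothing$ is handled separately).

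The technical heart of the argument is the identification $s_\ee(K_0) = (K_0)'$ for small $\ee$, which rests on the two concrete facts that $\|\delta_s - \delta_t\| = 2$ for $s \neq t$ and that the constant functions $w^*$-separate $\tau(K)$ from $-\tau(K)$. Once this is in place, the transfinite induction is automatic and Theorem \ref{theorem} handles the passage to the convex hull. In the complex scalar case one proceeds analogously, taking $K_0 = \{\alpha \delta_t : |\alpha| = 1,\, t \in K\}$; the extra circle factor does not inflate the Szlenk index, again because testing against $f \equiv 1$ recovers convergence in the $\alpha$-variable.
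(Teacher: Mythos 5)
Your proposal is correct and follows essentially the same route as the paper: realize $B_{C(K)^*}$ as $\overline{\text{co}}^{w^*}$ of the unimodular multiples of the Dirac measures via Krein--Milman/Hahn--Banach, apply Theorem \ref{theorem}, and identify the Szlenk derivation on the Dirac measures with the Cantor--Bendixson derivation using $\|\delta_s-\delta_t\|=2$. The only (cosmetic) difference is that the paper factors the scalar-multiple issue into a separate general lemma (Lemma \ref{lemma1}) showing $Sz(S_\mathbb{F}K)=Sz(K)$, whereas you handle the sign/circle factor inline by testing against the constant function; both are fine.
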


Recall that for two Banach spaces $X, Y$, $X\hat{\otimes}_\ee Y$ denotes the injective tensor product of $X$ and $Y$.  All relevant definitions will be given in the next section.  Recall also that for operators $A:X\to X_1$, $B:Y\to Y_1$, there is an induced operator from $X\hat{\otimes}_\ee Y$ into $X_1\hat{\otimes}_\ee Y_1$ which is the continuous, linear extension of the operator $x\otimes y\mapsto Ax\otimes By$. We denote this operator by $A\otimes B$.  Using Theorem \ref{theorem} and known results about the Szlenk index, we will prove the following. 

\begin{theorem} Let $A:X\to X_1$ $B:Y\to Y_1$ be non-zero operators.  Then $$Sz(A\otimes B)=\max\{Sz(A), Sz(B)\}.$$    In particular, for any non-zero Banach spaces $X,Y$, $Sz(\tens)=\max\{Sz(X), Sz(Y)\}$.

\label{maintheorem3}
\end{theorem}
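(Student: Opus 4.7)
The plan is to establish the two inequalities $Sz(A \otimes B) \geq \max\{Sz(A), Sz(B)\}$ and $Sz(A \otimes B) \leq \max\{Sz(A), Sz(B)\}$ separately; the stated formula for $Sz(\tens)$ then follows by specialization to $A = \mathrm{id}_X$ and $B = \mathrm{id}_Y$.

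For the lower bound, the strategy is to factor $A$ through $A \otimes B$. Since $B$ is non-zero, pick $y_0 \in Y$ with $B y_0 \neq 0$, and by Hahn-Banach choose $y_0^* \in Y_1^*$ with $\langle B y_0, y_0^* \rangle = 1$. Define $J_{y_0}: X \to X \hat{\otimes}_\varepsilon Y$ by $x \mapsto x \otimes y_0$ and $P_{y_0^*}: X_1 \hat{\otimes}_\varepsilon Y_1 \to X_1$ as the continuous linear extension of $x_1 \otimes y_1 \mapsto \langle y_1, y_0^* \rangle x_1$ (i.e., $\mathrm{id}_{X_1} \otimes y_0^*$). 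A direct evaluation on elementary tensors yields $P_{y_0^*} \circ (A \otimes B) \circ J_{y_0} = A$, and the standard monotonicity of the Szlenk index under bounded two-sided composition gives $Sz(A) \leq Sz(A \otimes B)$; the analogous factorization on the other side produces $Sz(B) \leq Sz(A \otimes B)$.

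For the upper bound, the plan is to invoke Theorem \ref{theorem}. The definition of the injective norm together with the geometric Hahn-Banach theorem yields $B_{(X_1 \hat{\otimes}_\varepsilon Y_1)^*} = \overline{\mathrm{co}}^*(\tilde K_1)$, where $\tilde K_1 = \{\pm x_1^* \otimes y_1^* : x_1^* \in B_{X_1^*},\, y_1^* \in B_{Y_1^*}\}$. Because $(A \otimes B)^*$ is linear and $w^*$-$w^*$ continuous and sends $x_1^* \otimes y_1^*$ to $A^* x_1^* \otimes B^* y_1^*$, it follows that $(A \otimes B)^* B_{(X_1 \hat{\otimes}_\varepsilon Y_1)^*} = \overline{\mathrm{co}}^*(\tilde K)$, where $\tilde K = K \cup (-K)$ and $K = \{A^* x_1^* \otimes B^* y_1^* : x_1^* \in B_{X_1^*},\, y_1^* \in B_{Y_1^*}\}$. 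Theorem \ref{theorem} then gives $Sz(A \otimes B) = \Gamma(Sz(\tilde K)) = \Gamma(Sz(K))$.

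What remains is to prove $Sz(K) \leq \max\{Sz(A), Sz(B)\}$. Set $L_1 = A^* B_{X_1^*}$, $L_2 = B^* B_{Y_1^*}$, and $r = \max(\|A\|, \|B\|)$. The key technical step is a transfinite induction on $\xi$ establishing the inclusion
\[ s_\varepsilon^\xi(K) \subseteq \bigl\{x^* \otimes y^* : x^* \in s_{\varepsilon/(2r)}^\xi(L_1),\, y^* \in L_2\bigr\} \cup \bigl\{x^* \otimes y^* : x^* \in L_1,\, y^* \in s_{\varepsilon/(2r)}^\xi(L_2)\bigr\}. \]
The anchor is the Lipschitz estimate $\|x_1^* \otimes y_1^* - x_2^* \otimes y_2^*\| \leq r(\|x_1^* - x_2^*\| + \|y_1^* - y_2^*\|)$ on $L_1 \times L_2$. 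The main obstacle lies in the successor step: to certify that an element outside the right-hand side admits a small-diameter $w^*$-open neighborhood in $K$, one must reckon with the non-injectivity of the elementary-tensor map $\Phi: L_1 \times L_2 \to K$. I would navigate this by exploiting that $\Phi$ is a continuous surjection between compact Hausdorff spaces, hence a quotient map, so that appropriately saturated rectangular opens in $L_1 \times L_2$ descend to opens in $K$. Taking $\xi = \max\{Sz(A), Sz(B)\}$ forces the right side of the inclusion to be empty at every $\varepsilon > 0$, so $Sz(K) \leq \max\{Sz(A), Sz(B)\}$; since Szlenk indices are always gamma numbers, $\Gamma(Sz(K)) \leq \max\{Sz(A), Sz(B)\}$, as required.
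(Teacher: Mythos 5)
Your lower bound (explicit factorization of $A$ and $B$ through $A\otimes B$) and your reduction of the upper bound are both in line with the paper: the paper likewise deduces $Sz(A\otimes B)\geqslant\max\{Sz(A),Sz(B)\}$ from Proposition \ref{szlenk facts}$(xi)$, and uses Theorem \ref{hb} to identify $(A\otimes B)^*B_{(X_1\hat{\otimes}_\ee Y_1)^*}$ with the $w^*$-closed convex hull of the set $C$ of elementary tensors $A^*x^*\otimes B^*y^*$, so that Theorem \ref{theorem} reduces everything to bounding $Sz(C)$ (over $\mathbb{C}$ you need unimodular multiples rather than $\pm$, but Lemma \ref{lemma1} makes that cosmetic).

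The gap is in your final step. The inclusion $s_\ee^\xi(K)\subseteq\bigl(s^\xi_{\ee/(2r)}(L_1)\otimes L_2\bigr)\cup\bigl(L_1\otimes s^\xi_{\ee/(2r)}(L_2)\bigr)$ is false for general $w^*$-compact $L_1,L_2$, and the non-injectivity of the elementary-tensor map is not the real obstruction. The problem is that a single derivation of $K$ can be witnessed by separation in \emph{either} factor, so derivations accumulate \emph{additively} across the two factors rather than maximally; the true analogue of your inclusion involves the sets $s^{\xi_1}(L_1)\otimes s^{\xi_2}(L_2)$ with $\xi_1+\xi_2=\xi$. Concretely, take $L_1=L_2=\{x_0^*\}\cup\{x_0^*+\delta e_n^*:n\in\nn\}\subset\ell_1=c_0^*$ with $x_0^*=e_1^*$ and $n\geqslant 2$: then $s^2_{\delta'}(L_i)=\varnothing$ for every $\delta'>0$, yet $(x_0^*+\delta e_n^*)\otimes(x_0^*+\delta e_m^*)\underset{w^*}{\to}(x_0^*+\delta e_n^*)\otimes x_0^*\underset{w^*}{\to}x_0^*\otimes x_0^*$ with separation at least $\delta$ at each stage, so $x_0^*\otimes x_0^*\in s^2_\ee(L_1\otimes L_2)$ for $\ee<\delta$ while your right-hand side is empty. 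The same phenomenon blocks the successor step of your induction even when the inclusion is not outright false: the two separated nets $x^{i,*}_\lambda\otimes y^{i,*}_\lambda$ may carry their separation in the $Y$-factor while the inductive hypothesis places only the $X$-factors in the derived set, and the limits of $(x^{1,*}_\lambda)$ and $(x^{2,*}_\lambda)$ need not coincide, so neither factor is certified to enter the next derived set. What rescues the theorem is convexity, which your argument never invokes: the paper realizes $C$ as a $w^*$-continuous Lipschitz image of the convex sum $M_1+M_2\subset(X\oplus_1Y)^*$ with $M_1=A^*B_{X_1^*}\times\{0\}$, $M_2=\{0\}\times B^*B_{Y_1^*}$ (Lemma \ref{just}$(i)$), and then applies Proposition \ref{szlenk facts}$(x)$ --- the nontrivial cited fact that $Sz(M_1+M_2)=\max\{Sz(M_1),Sz(M_2)\}$ for convex sets --- whose proof absorbs the additive accumulation precisely through the submultiplicativity $Sz_\ee(K)>\xi\Rightarrow Sz_{\ee/n}(K)>\xi n$ that makes Szlenk indices of convex sets gamma numbers. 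You need to route your argument through some such convexity input; a derivation-by-derivation bookkeeping on the non-convex set $K$ alone will not close.
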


If $K$ is compact, Hausdorff and $X$ is a Banach space, we let $C(K, X)$ denote the continuous, $X$-valued functions defined on $K$.  By the work of Grothendieck \cite{Grothendieck} (see also \cite{Ryan}), $C(K, X)=C(K)\hat{\otimes}_\ee X$.  Using this fact together with Theorems \ref{maintheorem} and  \ref{maintheorem3}, we obtain the following.

\begin{theorem} Let $K$ be compact, Hausdorff and let $X$ be a non-zero Banach space.  Then  $Sz(C(K,X))=\max\{Sz(C(K)), Sz(X)\}$.   

\label{maintheorem2}
\end{theorem}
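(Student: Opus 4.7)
The plan is to deduce this directly from the preceding two results. By a classical theorem of Grothendieck (recorded in the paragraph preceding the statement), there is a canonical isometric isomorphism $C(K,X) \cong C(K)\hat{\otimes}_\ee X$ sending $f \otimes x$ to the function $t \mapsto f(t)x$. Since the Szlenk index is an isometric invariant, it suffices to compute $Sz(C(K)\hat{\otimes}_\ee X)$.

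Assuming $K \neq \varnothing$ so that $C(K)$ is non-zero, I would then apply the ``in particular'' clause of Theorem \ref{maintheorem3}, with its two Banach spaces taken to be $C(K)$ and $X$. This immediately yields
$$Sz(C(K)\hat{\otimes}_\ee X) = \max\{Sz(C(K)), Sz(X)\},$$
which is the desired identity. The edge case $K = \varnothing$ is trivial, since then $C(K,X) = \{0\}$ and both sides collapse.

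Essentially no obstacle remains at this stage: all the substantive work has been absorbed into Theorem \ref{theorem} and its consequence Theorem \ref{maintheorem3}, and the argument reduces to checking that the hypotheses of the latter (both tensor factors non-zero) are met. A slightly different route would bypass the tensor product identification altogether and argue directly that $B_{C(K,X)^*}$ is the $w^*$-closed convex hull of $\{\delta_t \otimes x^* : t \in K,\ x^* \in B_{X^*}\}$ via the Riesz representation theorem; one could then invoke Theorem \ref{theorem} jointly with Theorem \ref{maintheorem} to recover the formula. The route through Theorem \ref{maintheorem3} is, however, by far the cleaner presentation.
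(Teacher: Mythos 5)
Your proof is correct and is precisely the paper's argument: the Grothendieck identification $C(K,X)=C(K)\hat{\otimes}_\ee X$ followed by the ``in particular'' clause of Theorem \ref{maintheorem3} applied to the non-zero factors $C(K)$ and $X$. One quibble: in the degenerate case $K=\varnothing$ the two sides do \emph{not} both collapse --- the left side is $Sz(\{0\})=1$ while the right side is $\max\{1,Sz(X)\}=Sz(X)$ --- so that case is actually a counterexample rather than a triviality; but the theorem (like Theorem \ref{maintheorem}, which would likewise fail for $K=\varnothing$) is implicitly stated for non-empty $K$, and your main argument is exactly right.
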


Combining Theorems \ref{maintheorem} and \ref{maintheorem2} completely determines the Szlenk index of $C(K,X)$ in terms of the Szlenk index of $X$ and the Cantor-Bendixson index of $K$.

Theorem \ref{theorem}allows us to easily construct a Banach space with prescribed Szlenk index if we can construct a norming set for that Banach space which has this Szlenk index. Recall that the \emph{cofinality} of an ordinal $\xi$ is the minimum cardinality of a subset $A$ of $[0, \xi)$ such that $\sup A=\sup [0, \xi)$.  

\begin{theorem} Let $\Gamma=\{\omega^\xi: \xi\in \textbf{\emph{Ord}}, \xi\text{\ \emph{has countable cofinality}}\}$ and $\Lambda=\{\omega^{\omega^\xi}: \xi\in \textbf{\emph{Ord}}, \xi\text{\ \emph{is a limit ordinal}}\}$.  Then for any ordinal $\xi$, there exists a Banach space $X$ with $Sz(X)=\xi$ if and only if $\xi\in \Gamma\setminus \Lambda$.  

\label{maintheorem4}
\end{theorem}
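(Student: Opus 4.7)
The plan is to verify both directions of the characterization, using Theorem~\ref{theorem} as the central reduction tool in each direction.

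For necessity, I first argue $Sz(X)\in\Gamma$ whenever $Sz(X)<\infty$. Applying the Krein--Milman theorem to the $w^*$-compact convex set $B_{X^*}$ yields $B_{X^*}=\overline{\text{co}}^*(M)$ for $M=\overline{\text{ext}(B_{X^*})}^*$, and Theorem~\ref{theorem} then produces $Sz(X)=\Gamma(Sz(M))$, which is a gamma number $\omega^\alpha$. The countable-cofinality condition follows from a separate observation: for any $w^*$-compact $K$ and any $\varepsilon>0$, $Sz_\varepsilon(K)$ is a successor ordinal, because the transfinite sequence $s_\varepsilon^\zeta(K)$ consists of $w^*$-compact sets and the intersection of a decreasing chain of nonempty $w^*$-compact sets is nonempty, so the derivation first reaches $\varnothing$ at a successor stage. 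Thus $Sz(X)=\sup_n Sz_{1/n}(X)$ is a countable supremum of successors; when $Sz(X)>1$, this supremum cannot stabilize --- $\omega^0=1$ is the only successor gamma number --- so it is a limit ordinal of cofinality $\omega$, placing $\omega^\alpha$ in $\Gamma$.

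The $\Lambda$-exclusion is the main difficulty. Assuming for contradiction that $Sz(X)=\omega^{\omega^\xi}$ with $\xi$ a limit ordinal, the facts that $\omega^{\omega^\xi}$ is a limit of strictly smaller gamma numbers and that $\omega^{\omega^\xi}=\Gamma(Sz(M))$ together force $Sz(M)=\omega^{\omega^\xi}$ exactly. The plan is then to invoke a finer, scale-sensitive refinement of Theorem~\ref{theorem} bounding $Sz_\delta(\overline{\text{co}}^*(M))$ in terms of the $Sz_\varepsilon(M)$ for $\varepsilon<\delta$, and to show that the successor approximants $Sz_{1/n}(M)$, all strictly below the limit gamma number $\omega^{\omega^\xi}$, cannot accumulate through the convex-hull operation to produce $Sz(B_{X^*})=\omega^{\omega^\xi}$ unless some $Sz_{1/n}(M)$ already reaches $\omega^{\omega^\xi}$, which is impossible since $Sz_{1/n}(M)$ is a successor and $\omega^{\omega^\xi}$ is not. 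The conceptual heart is that successor approximants cannot build a limit-gamma exponent $\omega^\xi$ (with $\xi$ itself a limit) through the convex-hull operation. This step is the principal technical hurdle of the proof, and is the reason a simple counting argument recovers only $\Gamma$-membership.

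For sufficiency, given $\omega^\alpha\in\Gamma\setminus\Lambda$, I realize it by an explicit construction proceeding by induction on $\alpha$. For $\alpha=0$, take $X=\mathbb{R}$. For $\alpha=\beta+1$ a successor, apply Theorem~\ref{maintheorem} to the compact ordinal interval $K=[1,\omega^{\omega^\beta}]$; since $i(K)=\omega^\beta+1$, this yields $Sz(C(K))=\Gamma(\omega^\beta+1)=\omega^{\beta+1}$. For $\alpha$ a limit ordinal of countable cofinality not of the form $\omega^\xi$ with $\xi$ a limit, fix an increasing sequence $\alpha_n\uparrow\alpha$ with each $\omega^{\alpha_n}\in\Gamma\setminus\Lambda$ --- choosing each $\alpha_n$ to be a successor always works --- let $X_n$ realize $\omega^{\alpha_n}$ by the inductive hypothesis, and set $X=c_0(X_n)$. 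The set $K=\{0\}\cup\bigcup_n\iota_n(B_{X_n^*})$, where $\iota_n$ is the coordinate embedding into $X^*\cong\ell_1(X_n^*)$, is $w^*$-compact (the coordinate copies $w^*$-converge to $0$), norming, and satisfies $\overline{\text{co}}^*(K)=B_{X^*}$; a direct analysis gives $Sz(K)=\sup_n Sz(X_n)=\omega^\alpha$, and Theorem~\ref{theorem} then produces $Sz(X)=\Gamma(\omega^\alpha)=\omega^\alpha$, completing the construction.
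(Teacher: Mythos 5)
Your necessity argument for $\Gamma$-membership is fine, but the other two components have problems, one of them fatal.

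The limit stage of your construction is wrong, not merely unjustified. The identity $Sz\bigl((\oplus_n X_n)_{c_0}\bigr)=\sup_n Sz(X_n)$ fails in general, and it fails for exactly the spaces your induction produces. Claim \ref{trivial claim} only gives $Sz_\ee(K)\leqslant \sup_n Sz_\ee(\iota_n B_{X_n^*})+1$ \emph{for each fixed} $\ee$, so to conclude $Sz(K)=\omega^\alpha$ you need the $\ee$-Szlenk indices of the summands to stay uniformly below $\omega^\alpha$ for each fixed $\ee$; otherwise $w^*$-compactness forces $s_\ee^{\omega^\alpha}(K)\neq\varnothing$ and hence $Sz(K)>\omega^\alpha$, whence $\Gamma(Sz(K))\geqslant\omega^{\alpha+1}$. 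Your successor-stage spaces $X_n=C([1,\omega^{\omega^{\beta_n}}])$ destroy this uniformity: by Lemma \ref{lemma2}, $Sz_\ee(D_{K_n})=i(K_n)=\omega^{\beta_n}+1$ for \emph{every} $\ee\in(0,2)$, so already at the single scale $\ee=1$ one has $\sup_n Sz_1(\iota_n B_{X_n^*})\geqslant \sup_n\omega^{\beta_n}=\omega^\alpha$, and your $X$ has $Sz(X)=\omega^{\alpha+1}$ (consistent with $X\cong C$ of the one-point compactification of $\sqcup_n K_n$). This uniformity problem is the entire difficulty of the sufficiency direction; the paper's space $\mathfrak{S}$ is built precisely to defeat it, by weighting the long branches of the $n$-th tree by scalars $\theta_n\to 0$ so that for each fixed $\ee$ only finitely many summands $L_n$ (those with $2\theta_n\geqslant\ee$) contribute a large $\ee$-index, while the unweighted pieces $L_{n,\gamma}$ keep every $\ee$-index of the whole norming set bounded by roughly $\omega^\alpha\cdot\beta_n$. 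No induction on $\alpha$ that only carries the hypothesis ``$Sz(X_n)=\omega^{\alpha_n}$'' can work; you must carry quantitative control of $Sz_\ee$ at every scale.

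The $\Lambda$-exclusion is also not proved: you describe a ``plan'' to invoke a ``scale-sensitive refinement of Theorem \ref{theorem},'' but no such refinement is stated, and the convex-hull operation is not the mechanism. The actual reason $Sz(X)\neq\omega^{\omega^\xi}$ for limit $\xi$ is Lancien's submultiplicativity of the $\ee$-Szlenk indices of a dual \emph{ball}, $Sz_{\ee\delta}(B_{X^*})\leqslant Sz_\ee(B_{X^*})\,Sz_\delta(B_{X^*})$, which bounds $Sz(X)$ by $\omega^{\alpha_\ee\cdot\omega}$ for a single scale $\ee$ and hence forces the exponent of $Sz(X)$ to be $0$, a successor, or to be attained cofinally by a single scale --- impossible when the exponent is $\omega^\xi$ with $\xi$ a limit. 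The paper simply cites this as Proposition \ref{proximity}$(iii)$; you could do the same, but as written your sketch does not contain the idea.
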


This answers a question of Brooker \cite{BrookerAsplund}, who showed that the class of ordinals $\xi$ for which there exists an operator with Szlenk index $\omega^\xi$ is precisely the class of ordinals with countable cofinality.  The only difference between the space and operator version is that those ordinals of the form $\omega^{\omega^\xi}$ where $\xi$ is a limit ordinals may arise as the Szlenk index of an operator, but not as the Szlenk index of a Banach space.  This is due to the fact that for a Banach space, the $\ee$-Szlenk indices are submultiplicative, while for an operator, this is not necessarily true.

\section{Proofs of Theorems \ref{maintheorem}, \ref{maintheorem3}, \ref{maintheorem2}}

Recall that a real Banach space $X$ is said to be \emph{Asplund} if every real-valued, convex, continuous function defined on a convex, open subset $U$ of $X$ is Fr\'{e}chet differentiable on a dense $G_\delta$ subset of $U$.   If $X$ is a complex Banach space, we say it is Asplund if it is Asplund as a vector space over the reals.  We say that a subset $K\subset X^*$ is $w^*$-\emph{fragmentable} if for every non-empty subset $L$ of $K$ and any $\ee>0$,  there exists a $w^*$-open set $V$ in $X^*$ such that $\text{diam}(V\cap L) \leqslant \ee$ and $V\cap L\neq \varnothing$. It is easy to see that $Sz(K)=\infty$ if and only if $K$ fails to be $w^*$-fragmentable.   

We first collect some facts concerning the Szlenk index.  

\begin{proposition}Let $X$ be a Banach space and let $K\subset X^*$ be $w^*$-compact.  \begin{enumerate}[(i)]\item $Sz(K)=1$ if and only if $K$ is norm compact.  In particular, $Sz(X)=1$ if and only if $\dim X<\infty$.  \item $Sz(X)<\infty$ if and only if $X$ is Asplund. \item If $K\subset L$ for some $L\subset X^*$ $w^*$-compact, $Sz(K)\leqslant Sz(L)$.  \item If $Y$ embeds isomorphically into $X$, $Sz(Y)\leqslant Sz(X)$.  \item If $K$ is convex and non-empty and $Sz(K)<\infty$, then there exists an ordinal $\xi$ such that $Sz(K)=\omega^\xi$.  That is, $Sz(K)$ is a gamma number.  \item For $\ee>0$, $x^*\in s_\ee(K)$ if and only if there exist a directed set $D$ and nets $(x^*_\lambda)_{\lambda\in D}$, $(y^*_\lambda)_{\lambda\in D}\subset K$ each of which converges $w^*$ to $x^*$ such that for each $\lambda\in D$, $\|x^*_\lambda-y^*_\lambda\|>\ee$.  \item For $\ee>0$, if $x^*\in s_\ee(K)$, there exists a net $(x^*_\lambda)\subset K$ converging $w^*$ to $x^*$ such that $\|x^*_\lambda-x^*\|>\ee/2$ for every $\lambda$.    \item If there exists a net $(x^*_\lambda)\subset K$ converging $w^*$ to $x^*$ such that $\|x^*_\lambda-x^*\|>\ee$ for every $\lambda$, then $x^*\in s_\ee(K)$.  \item For any $w^*$-compact sets $K_1, \ldots, K_n$ and any $\ee>0$, $s_\ee(\cup_{i=1}^n K_i)\subset \cup_{i=1}^n s_{\ee/2}(K_i)$.  \item If $L\subset X^*$ is $w^*$-compact and $K,L$ are convex and non-empty, $$Sz(K+L)=\max\{Sz(K), Sz(L)\}.$$ \item If $A:X\to Y$ and $B:X_0\to Y_0$ are such that $A$ factors through $B$, then $Sz(A)\leqslant Sz(B)$. \item If $j:X\to Y$ is an isomorphic embedding, $Sz(j)=Sz(X)$. \item For any operator $A:X\to Y$ and any $Z\leqslant X$, $Sz(A|_Z)\leqslant Sz(A)$.  \end{enumerate}

\label{szlenk facts}
\end{proposition}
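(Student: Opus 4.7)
The plan is to develop the net characterizations (vi)--(ix) first, as they serve as the topological workhorse for the remaining items. For (vi), given $x^*\in s_\ee(K)$, index a net by the directed set of $w^*$-neighborhoods of $x^*$ ordered by reverse inclusion; for each such $V$, the hypothesis $\text{diam}(V\cap K)>\ee$ permits the selection of $x^*_V,y^*_V\in V\cap K$ with $\|x^*_V-y^*_V\|>\ee$, and both nets converge $w^*$ to $x^*$. The converse direction of (vi) is immediate, since any $w^*$-neighborhood of $x^*$ eventually contains both nets. Item (vii) follows from (vi) by the triangle inequality: at each index, at least one of $x^*_\lambda,y^*_\lambda$ has norm distance greater than $\ee/2$ from $x^*$, so select that one. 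Item (viii) is then immediate, using that $x^*\in K$ by $w^*$-closedness. For (ix), apply (vii) to obtain a net in $\bigcup_{i=1}^n K_i$ with norm distance greater than $\ee/2$ from $x^*$, pass by pigeonhole to a subnet lying in some fixed $K_i$, and invoke (viii).

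The monotonicity items (iii), (iv), (xi)--(xiii) amount to routine bookkeeping. Item (iii) is a straightforward transfinite induction yielding $s^\xi_\ee(K)\subset s^\xi_\ee(L)$. Item (xi) is obtained from a factorization $A=CBD$ at the dual level, namely $A^*B_{Y^*}\subset\|C\|\,D^*(B^*B_{Y_0^*})$, combined with the observation (also proved by transfinite induction) that a bounded $w^*$-continuous linear map does not increase the Szlenk index, at the cost of rescaling $\ee$ by the operator norm. Item (xii) sandwiches $j^*B_{Y^*}$ between two scaled copies of $B_{X^*}$, using that $j^*$ is a quotient map with constant $\|j^{-1}\|^{-1}$, and applies (iii). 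Items (iv) and (xiii) are special cases of (xi), applied to the isomorphic embedding $Y\hookrightarrow X$ and to the inclusion $Z\hookrightarrow X$, respectively.

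Items (i) and (ii) are classical. For (i), $Sz(K)=1$ means every $x^*\in K$ admits a $w^*$-neighborhood $V$ with $\text{diam}(V\cap K)\leqslant\ee$; $w^*$-compactness of $K$ then yields a finite $\ee$-net, so $K$ is norm compact. The converse is the coincidence of the $w^*$- and norm topologies on a norm-compact set, together with Alaoglu for the finite-dimensional case of $X$. Item (ii) is the classical equivalence between Asplundness of $X$ and $w^*$-fragmentability of $B_{X^*}$, which I will cite.

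The substantive items are (v) and (x). For (v), convexity of $K$ is used to show that the indices $Sz_\ee(K)$ satisfy a submultiplicative relation in $\ee$ (of the form $Sz_\ee(K)\leqslant Sz_{\ee/2}(K)\cdot Sz_{\ee/2}(K)$ or similar), which forces $Sz(K)$ to be closed under ordinal multiplication and hence a gamma number $\omega^\xi$. For (x), the inequality $\max\{Sz(K),Sz(L)\}\leqslant Sz(K+L)$ follows from (iii) by translating $K+y^*\subset K+L$ for any fixed $y^*\in L$. The reverse inequality is the main obstacle: the plan is to combine (v) (so that both sides are gamma numbers and comparison reduces to comparing $Sz_\ee$) with a containment of the form $s_\ee(K+L)\subset(s_\delta(K)+L)\cup(K+s_\delta(L))$ for a suitable $\delta=\delta(\ee)>0$, obtained by applying the triangle inequality to summand decompositions of the nets furnished by (vi), and then conclude by transfinite induction.
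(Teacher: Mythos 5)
Most of your items are fine: the net characterizations (vi)--(ix), the monotonicity statements, and (i)--(ii) are handled essentially as one would expect (the paper itself only cites references for these, proving only (v) in detail). One small slip: (iv) is not literally a special case of (xi), since $I_Y$ factors through $I_X$ only when $Y$ is isomorphic to a \emph{complemented} subspace of $X$; you need to combine (xi) with (xii) (apply (xi) to $j=I_X\circ j$ and then use $Sz(j)=Sz(Y)$), or argue directly with $j^*B_{X^*}\supset cB_{Y^*}$. The genuine problems are in (v) and (x). For (v), the inequality you propose, $Sz_\ee(K)\leqslant Sz_{\ee/2}(K)\cdot Sz_{\ee/2}(K)$, is trivially true (already $Sz_\ee(K)\leqslant Sz_{\ee/2}(K)$ since $s_\ee(A)\subset s_{\ee/2}(A)$) and implies nothing about $\sup_\ee Sz_\ee(K)$. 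What is needed is a \emph{lower} bound at the smaller scale: convexity gives $nK=K+\cdots+K$, one checks $s_\ee^\xi(K)+L\subset s_\ee^\xi(K+L)$ and $cs_\ee^\xi(K)=s_{c\ee}^\xi(cK)$, and iterating yields $s_\ee^\xi(K)+\cdots+s_\ee^\xi(K)\subset s_\ee^{\xi n}(nK)=n\,s_{\ee/n}^{\xi n}(K)$, hence $Sz_\ee(K)>\xi$ implies $Sz_{\ee/n}(K)>\xi n$. That superadditivity is what makes $Sz(K)$ closed under $\xi\mapsto\xi\cdot 2$ and hence additively indecomposable; an upper bound in your direction cannot do this.

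For (x), the easy inequality via translation is correct, and the one-step containment $s_\ee(K+L)\subset(s_{\ee/4}(K)+L)\cup(K+s_{\ee/4}(L))$ is also correct (decompose the net from (vii), pass to subnets, apply the triangle inequality and (viii)). But the proposed transfinite induction does not close. At each successor stage you must take $s_\ee$ of a union, and (ix) costs a factor of $2$ in $\ee$ each time, so after $\omega$ stages the parameter has degenerated to $0$; moreover the natural inductive hypothesis $s_\ee^\mu(K+L)\subset\bigcup_{\alpha+\beta=\mu}\bigl(s_\delta^\alpha(K)+s_\delta^\beta(L)\bigr)$ involves an \emph{infinite} union once $\mu\geqslant\omega$, which is outside the scope of (ix) entirely (and the summand decomposition of a $w^*$-limit need not respect the levels $\alpha_\lambda,\beta_\lambda$ of the approximating net). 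This is precisely why $Sz(K+L)=\max\{Sz(K),Sz(L)\}$ for convex sets is a theorem of \cite{Causey} rather than a routine derivation-set computation: the proof there goes through the weakly null tree characterization of the Szlenk index (Theorem \ref{characterization theorem} here) together with a Ramsey-type dichotomy, which is the same machinery Sections 4--5 of this paper develop for Theorem \ref{theorem}. If you intend to prove (x) rather than cite it, you need that route (or at least the gamma-number structure from (v) combined with a genuinely different bookkeeping), not the direct induction as sketched.
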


Items $(iii)$, $(vi)$, $(vii)$, and $(viii)$ follow easily from the definition.  Item $(i)$ can be found in \cite{BrookerAsplund}.  Item $(ii)$ follows from the fact that a Banach space is an Asplund space if and only if its dual ball is $w^*$-fragmentable, which is shown in Chapter I.$5$ of \cite{DGZ} for real Banach spaces and can be deduced for the complex case using the real case (see \cite[Theorem $1.4$]{BrookerAsplund} for the details).    Item $(iv)$ is given in \cite{HLM}.      Item $(ix)$ is given in \cite{BrookerDirect}.   Last, item $(x)$ was shown in \cite{Causey}.    Items $(xi)$, $(xii)$, and $(xiii)$ were shown by Brooker \cite{BrookerAsplund}. Item $(v)$ is given in \cite{Lancien} when $K=B_{X^*}$, but the same proof works for convex sets. For the ease of the reader, we sketch the proof here.  

\begin{proof}[Proof of $(v)$] First we claim that for any $K,L\subset X^*$ $w^*$-compact,  any $\ee>0$, and any ordinal $\xi$, $s_\ee^\xi(K)+L\subset s_\ee^\xi(K+L).$  The case $\xi=0$ is trivial. Next, assume $x^*+y^*\in s_\ee^{\xi+1}(K)+L$ with $x^*\in s_\ee^{\xi+1}(K)$, $y^*\in L$, and assume $s^\xi_\ee(K)+L\subset s_\ee^\xi(K+L)$.    Since $x^*\in s_\ee^{\xi+1}(K)$, for some directed set $D$, we may fix two nets $(x^{1,*}_\lambda)_{\lambda\in D}$, $(x^{2,*}_\lambda)_{\lambda\in D}$ contained in $s_\ee^\xi(K)$, converging $w^*$ to $x^*$, such that $\|x^{1,*}_\lambda- x^{2,*}_\lambda\|>\ee$ for all $\lambda\in D$.  This is by $(vi)$ above.  Also by $(vi)$, the nets $(x^{1,*}_\lambda+y^*)_{\lambda\in D}$, $(x^{2,*}_\lambda+y^*)_{\lambda\in D}$ witness that $x^*+y^*\in s_\ee(s_\ee^\xi(K)+L)\subset s_\ee(s_\ee^\xi(K+L))=s_\ee^{\xi+1}(K+L)$.   This proves the successor case. Last, assume $\xi$ is a limit ordinal and $s_\ee^\zeta(K)+L\subset s_\ee^\zeta(K+L)$ for all $\zeta<\xi$.   Then $$s_\ee^\xi(K)+L=\bigl(\cap_{\zeta<\xi}s_\ee^\zeta(K)\bigr)+L\subset \cap_{\zeta<\xi}(s_\ee^\zeta(K)+L)= s_\ee^\xi(K+L).$$  This finishes the claim.   

Now an easy homogeneity argument yields that for any $K\subset X^*$ $w^*$-compact and any $c,\ee>0$, $c s_\ee(K)=s_{c\ee}(cK)$. By induction, $c s_\ee^\xi(K)=s_{c\ee}^\xi(cK)$ for any ordinal $\xi$.  Finally, if $K$ is convex and $\ee>0$, $\xi\in \textbf{Ord}$, for any $n\in \nn$ and $0\leqslant j<n$, repeated applications of the claim from the beginning of the proof yield that \begin{align*} n s_{\ee/n}^{\xi n}(K) & = s_\ee^{\xi n}(nK) \supset s_\ee^{\xi(n-1)}\Bigl(K+\ldots +K+s^\xi_\ee(K)\Bigr) \\ & \supset s_\ee^{\xi(n-2)}\Bigl(K+\ldots +K +s^\xi_\ee(K)+s_\ee^\xi(K)\Bigr)\\ & \supset \ldots \supset s^\xi_\ee(K)+\ldots +s^\xi_\ee(K).\end{align*} From this we see that if $Sz_\ee(K)>\xi$, $Sz_{\ee/n}(K)>\xi n$.  Suppose $K\subset X^*$ is $w^*$-compact, convex, non-empty, and $Sz(K)<\infty$. Fix $\xi<Sz(K)$ and $\ee>0$ such that $s^\xi_\ee(K)\neq \varnothing$.  Then $s^{\xi\cdot  2}_{\ee/2}(K)\neq \varnothing$, and $\xi\cdot 2<Sz(K)$.  This means that $Sz(K)$ is a non-zero gamma number.  As mentioned above, the non-zero ordinals which are gamma numbers are precisely the ordinals of the form $\omega^\gamma$, $\gamma\in \textbf{Ord}$.

\end{proof}

We state the lemmas which, in conjunction with Theorem \ref{theorem}, yield Theorems \ref{maintheorem} and \ref{maintheorem2}.  We postpone the proof of Theorem \ref{theorem} until Section $4$.  In what follows, $\mathbb{F}$ will denote the scalar field (either real or complex numbers) and for $K\subset X^*$ and $T\subset \mathbb{F}$,  $$T K:=\{t x^*: t\in T,  x^*\in K\}.$$ We let $S_\mathbb{F}=\{t\in \mathbb{F}: |t|=1\}$.

\begin{lemma} For any $w^*$-compact subset $K$ of $X^*$, any $\ee_0>\ee>0$, and any ordinal $\xi$, $s^\xi_{\ee_0}(S_\mathbb{F}K)\subset S_\mathbb{F} s_{\ee/2}^\xi(K)$. In particular, $Sz_\ee(S_\mathbb{F}K)\leqslant     Sz_{\ee/3}(K)$ and $Sz(S_\mathbb{F}K)=Sz(K)$.  

\label{lemma1}
\end{lemma}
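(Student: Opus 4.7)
The plan is to prove the main inclusion $s^\xi_{\ee_0}(S_\mathbb{F}K)\subset S_\mathbb{F}s^\xi_{\ee/2}(K)$ by transfinite induction on $\xi$; the base case $\xi=0$ is trivial, and the ``in particular'' statements then fall out by a simple rescaling.

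For the limit case, I would fix $z^*\in\bigcap_{\zeta<\xi}s^\zeta_{\ee_0}(S_\mathbb{F}K)$ and, for each $\zeta<\xi$, consider the set
\[
A_\zeta := \{\alpha\in S_\mathbb{F} : \alpha z^*\in s^\zeta_{\ee/2}(K)\}.
\]
The inductive hypothesis forces each $A_\zeta$ to be non-empty, the $w^*$-continuity of $\alpha\mapsto\alpha z^*$ together with $w^*$-closedness of $s^\zeta_{\ee/2}(K)$ makes each closed in $S_\mathbb{F}$, and the family is decreasing in $\zeta$. Compactness of $S_\mathbb{F}$ supplies a single $\alpha\in\bigcap_{\zeta<\xi}A_\zeta$, so $\alpha z^*\in s^\xi_{\ee/2}(K)$ and $z^*\in S_\mathbb{F}s^\xi_{\ee/2}(K)$.

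The successor case carries the main work and is the step I expect to be the obstacle. Assuming the inclusion at $\xi$, pick $z^*\in s^{\xi+1}_{\ee_0}(S_\mathbb{F}K)$. Item $(vii)$ of Proposition \ref{szlenk facts} supplies a net $(z^*_\lambda)\subset s^\xi_{\ee_0}(S_\mathbb{F}K)$ converging $w^*$ to $z^*$ with $\|z^*_\lambda-z^*\|>\ee_0/2$. Using the inductive hypothesis, write $z^*_\lambda=t_\lambda x^*_\lambda$ with $t_\lambda\in S_\mathbb{F}$ and $x^*_\lambda\in s^\xi_{\ee/2}(K)$, pass to a subnet with $t_\lambda\to t\in S_\mathbb{F}$, and put $y^*:=t^{-1}z^*$. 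Boundedness of $K$ yields $x^*_\lambda=t_\lambda^{-1}z^*_\lambda\to y^*$ in $w^*$, and the triangle inequality
\[
\|x^*_\lambda-y^*\|\geqslant\|z^*_\lambda-z^*\|-|t_\lambda^{-1}-t^{-1}|\,\|z^*\|
\]
combined with $t_\lambda^{-1}\to t^{-1}$ shows $\|x^*_\lambda-y^*\|>\ee/2$ on a tail, so item $(viii)$ of Proposition \ref{szlenk facts} gives $y^*\in s^{\xi+1}_{\ee/2}(K)$ and hence $z^*\in S_\mathbb{F}s^{\xi+1}_{\ee/2}(K)$. The real role of the split $\ee_0>\ee$ is to leave enough slack to absorb the scalar perturbation $|t_\lambda^{-1}-t^{-1}|\,\|z^*\|$ at this step, with the crucial feature that this cost is paid once rather than compounding, because the same fixed pair of parameters is tracked from stage to stage.

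For the ``in particular'' assertions, applying the established inclusion with $(\ee_0,\ee)$ replaced by $(\ee,2\ee/3)$ yields $s^\xi_\ee(S_\mathbb{F}K)\subset S_\mathbb{F}s^\xi_{\ee/3}(K)$, hence $Sz_\ee(S_\mathbb{F}K)\leqslant Sz_{\ee/3}(K)$; taking the supremum over $\ee>0$ delivers $Sz(S_\mathbb{F}K)\leqslant Sz(K)$, and the reverse inequality is immediate from $K\subset S_\mathbb{F}K$ via item $(iii)$ of Proposition \ref{szlenk facts}.
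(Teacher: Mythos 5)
Your proof is correct and follows essentially the same route as the paper: transfinite induction in which the successor step passes to a subnet of the unimodular scalars and uses a perturbation estimate to show the norm gap only degrades from $\ee_0/2$ to $\ee/2$, with the "in particular" claims obtained exactly as you describe. The only (cosmetic) difference is in the limit step, where you package the compactness of $S_\mathbb{F}$ as a finite-intersection-property argument on the nested closed sets $A_\zeta$ rather than passing to subnets of the transfinite families $(\ee_\zeta)_{\zeta<\xi}$, $(x^*_\zeta)_{\zeta<\xi}$ as the paper does; both are valid.
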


\begin{proof} We will use Proposition \ref{szlenk facts}$(vii)$, $(viii)$ frequently throughout the proof.   We prove the result by induction.  The base case is trivial.  To see the successor case, it is sufficient to treat the $\xi=1$ case.  Assume $y^*\in s_{\ee_0}(S_\mathbb{F}K)$.  Fix $(\ee_\lambda x^*_\lambda)\subset S_\mathbb{F}K$ converging $w^*$ to $y^*$ such that for all $\lambda$, $\ee_\lambda\in S_{\mathbb{F}}$, $x^*_\lambda\in K$, and $\|\ee_\lambda x^*_\lambda - y^*\|>\ee_0/2$.   By passing to a subnet, we may assume $\ee_\lambda\to \ee\in S_\mathbb{F}$ and $x^*_\lambda\underset{w^*}{\to}x^*\in K$. Note that $y^*=\ee x^*$. Then $$\underset{\lambda}{\lim\sup} \|x^*_\lambda - x^*\| = \underset{\lambda}{\lim\sup} \|\ee_\lambda x^*_\lambda - \ee x^*\| \geqslant \ee_0/2>\ee/2.$$  From this it follows that $x^*\in s_{\ee/2}(K)$ and $y^*=\ee x^*\in S_\mathbb{F}s_{\ee/2}(K)$.   

Last, assume that the result holds for every $\zeta<\xi$, where $\xi$ is a limit ordinal.  Assume $y^*\in s^\xi_{\ee_0}(S_\mathbb{F}K)$.  For every $\zeta<\xi$, there exist $\ee_\zeta\in S_\mathbb{F}$ and $x^*_\zeta\in s^\zeta_{\ee/2}(K)$ such that $y^*=\ee_\zeta x^*_\zeta$.  By passing to subnets of $(\ee_\zeta)_{\zeta<\xi}$ and $(x^*_\zeta)_{\zeta<\xi}$, we may assume $\ee_\zeta\to \ee\in S_\mathbb{F}$ and $x^*_\zeta\underset{w^*}{\to}x^*$.  Then because the sets $(s^\zeta_{\ee/2}(K))_{\zeta<\xi}$ are decreasing and $w^*$-closed, $x^*\in s_{\ee/2}^\xi(K)$ and $y^*=\ee x^*\in S_\mathbb{F}s_{\ee/2}^\xi(K)$.

\end{proof}

\begin{lemma} Let $K$ be a compact, Hausdorff topological space. Let $D_K=\{\delta_\varpi:\varpi\in K\}\subset B_{C(K)^*}$, where $\delta_\varpi$ denotes the Dirac measure at $\varpi$.    Then for any $0<\ee<2$, $Sz_\ee(D_K)=i(L)$.

\label{lemma2}
\end{lemma}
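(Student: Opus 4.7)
The plan is to show that, under the identification $\varpi \leftrightarrow \delta_\varpi$, the Szlenk derivation $s_\ee$ on $D_K$ coincides with the Cantor--Bendixson derivation on $K$, for every $\ee \in (0,2)$. The statement (with $L$ presumably a typo for $K$) then follows by transfinite induction.

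The first step is the standard observation that the map $\varpi \mapsto \delta_\varpi$ is a homeomorphism from $K$, with its given compact Hausdorff topology, onto $D_K$ equipped with the relative $w^*$-topology inherited from $C(K)^*$; this is because $C(K)$ separates the points of $K$ and a net $\varpi_\lambda \to \varpi$ in $K$ iff $f(\varpi_\lambda) \to f(\varpi)$ for every $f \in C(K)$. The second step is to record the key metric fact: for any two distinct points $\varpi, \varpi' \in K$, Urysohn's lemma produces $f \in B_{C(K)}$ with $f(\varpi)=1$ and $f(\varpi')=-1$, so $\|\delta_\varpi - \delta_{\varpi'}\| = 2$. (This is the reason the hypothesis $\ee < 2$ is used.)

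With these two ingredients in hand, I can identify $s_\ee(D_K)$ concretely. Fix $\ee \in (0,2)$ and $\delta_\varpi \in D_K$. If $\varpi$ is isolated in $K$, then by the homeomorphism $\delta_\varpi$ is $w^*$-isolated in $D_K$, so any sufficiently small $w^*$-neighborhood $V$ of $\delta_\varpi$ satisfies $V \cap D_K = \{\delta_\varpi\}$ and hence $\mathrm{diam}(V \cap D_K)=0 < \ee$; thus $\delta_\varpi \notin s_\ee(D_K)$. Conversely, if $\varpi$ is a limit point of $K$, then every $w^*$-neighborhood $V$ of $\delta_\varpi$ contains some $\delta_{\varpi'}$ with $\varpi' \neq \varpi$, and $\|\delta_\varpi - \delta_{\varpi'}\|=2 > \ee$, giving $\mathrm{diam}(V\cap D_K) > \ee$. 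Hence
\[
s_\ee(D_K) = \{\delta_\varpi : \varpi \in K'\}.
\]

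The final step is a routine transfinite induction proving $s_\ee^\xi(D_K) = \{\delta_\varpi : \varpi \in K^\xi\}$ for all ordinals $\xi$. The base case is trivial; the successor case reduces to the $\xi=1$ case already handled, applied to the $w^*$-compact set $s_\ee^\xi(D_K)$ under the homeomorphism above. For limit $\xi$,
\[
s_\ee^\xi(D_K) \;=\; \bigcap_{\zeta<\xi} s_\ee^\zeta(D_K) \;=\; \bigcap_{\zeta<\xi}\{\delta_\varpi : \varpi \in K^\zeta\} \;=\; \bigl\{\delta_\varpi : \varpi \in \textstyle\bigcap_{\zeta<\xi} K^\zeta\bigr\} \;=\; \{\delta_\varpi : \varpi \in K^\xi\}.
\]
Taking $\xi$ to be the least ordinal for which either side is empty yields $Sz_\ee(D_K) = i(K)$, as desired.

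There is no real obstacle here; the only subtlety is the need to rule out large-diameter witnesses near an isolated $\delta_\varpi$, which is handled by the universal lower bound $\|\delta_\varpi - \delta_{\varpi'}\| = 2$ together with $\ee < 2$.
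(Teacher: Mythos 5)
Your proposal is correct and follows essentially the same route as the paper: the homeomorphism $\varpi\mapsto\delta_\varpi$ onto $(D_K,w^*)$, the fact that distinct Dirac measures are at norm distance exactly $2>\ee$, and a transfinite induction identifying $s_\ee^\xi(D_K)$ with $D_{K^\xi}$ (the paper phrases the induction for $D_L$ over all closed $L\subset K$, which is the same reduction you make by applying the $\xi=1$ case to $K^\xi$). You are also right that $i(L)$ in the statement is a typo for $i(K)$.
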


\begin{proof} Fix $0<\ee<2$. For any closed subset $L$ of $K$, let $D_L=\{\delta_\varpi: \varpi\in L\}\subset B_{C(K)^*}$.  We show by induction on $\xi$ that $s_\ee^\xi(D_L)= D_{L^\xi}$.  The base and limit ordinal cases are trivial.   In order to complete the successor case, it is sufficient to check the case that $\xi=1$.    Note that $\delta:L\to D_L$ given by $\delta(\varpi)=\delta_\varpi$ is a homeomorphism when $D_L$ is endowed with its $w^*$-topology.  Therefore if $\delta_\varpi\in D_{L'}$, $\varpi\in L'$, and $\varpi$ is not isolated in $L$.  This means $\delta_\varpi$ is not $w^*$-isolated in $\delta(L)=D_L$, and any $w^*$-neighborhood of $\delta_\varpi$ must contain two distinct members of $D_L$, and therefore have diameter exactly $2>\ee$.  From this it follows that $\delta_\varpi\in s_\ee(L)$.   Conversely, if $\delta_\varpi\in s_\ee(D_L)$, then $\varpi$ cannot be isolated in $L$, otherwise $\{\delta_\varpi\}$ is a $w^*$-neighborhood of $\delta_\varpi$ relative to $D_L$, and $\text{diam}(\{\delta_\varpi\}\cap D_L)=0<\ee$.   Thus $\varpi\in L'$, and $\delta_\varpi\in D_{L'}$.

\end{proof}

Here we isolate the geometric version of the Hahn-Banach theorem which allows us to apply Theorem \ref{theorem} to obtain Theorems \ref{maintheorem}, \ref{maintheorem3}, and \ref{maintheorem4}.  We discuss the proof for the sake of completeness.  

\begin{theorem}[Hahn-Banach] Let $Z$ be a locally convex topological vector space over $\mathbb{F}$ (either the real or complex numbers). If $C,D$ are disjoint, convex, compact subsets of $Z$, there exists a continuous, linear functional $\phi: Z\to \mathbb{F}$ and real numbers $\alpha<\beta$ such that $\max_{z\in C}\text{\emph{Re}\ }\phi(z)\leqslant \alpha<\beta \leqslant \min_{z\in D}\text{\emph{Re\ }}\phi(z)$.

\label{hb}
\end{theorem}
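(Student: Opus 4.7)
The plan is to reduce to the real case and then invoke a single standard separation fact: in a real locally convex space, a compact convex set not containing the origin can be strictly separated from $\{0\}$ by a continuous linear functional. I would first handle the reduction from $\mathbb{F}=\mathbb{C}$ to $\mathbb{F}=\mathbb{R}$ by observing that every complex locally convex space $Z$ is also a real locally convex space under the same topology, and that if a continuous real-linear $\psi:Z\to\mathbb{R}$ strictly separates $C$ from $D$, then $\phi(z):=\psi(z)-i\psi(iz)$ is continuous and complex-linear with $\mathrm{Re}\,\phi=\psi$, so it does the job of the theorem.

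Next, working in the real case, I would form $E:=D-C=\{d-c:c\in C,\,d\in D\}$. This set is convex (it is the Minkowski sum of the convex sets $D$ and $-C$), it is compact as the continuous image of $C\times D$ under $(c,d)\mapsto d-c$, and it is disjoint from $\{0\}$ by the hypothesis $C\cap D=\varnothing$. By local convexity (and Hausdorffness, which is included in the standard definition), one can choose an open convex balanced neighborhood $U$ of $0$ with $U\cap E=\varnothing$; this uses compactness of $E$ together with the fact that the complement of $E$ is an open neighborhood of $0$ containing a basic convex balanced neighborhood.

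The key step is now the geometric Hahn-Banach theorem for the open convex set $U$ and the convex set $E$: since $U$ is open, convex, nonempty and disjoint from the convex set $E$, there exists a continuous real-linear $\psi:Z\to\mathbb{R}$ and a real number $\gamma$ with
\[
\psi(u) < \gamma \leqslant \psi(e)\quad \text{for all }u\in U,\,e\in E.
\]
Because $U$ is a balanced neighborhood of $0$ and $\psi$ is non-trivial, $\sup_{u\in U}\psi(u)$ is some strictly positive number $\eta$, and compactness of $E$ together with continuity of $\psi$ yields $\min_{e\in E}\psi(e)\geqslant \gamma\geqslant \eta>0$. Unwinding, $\psi(d)-\psi(c)\geqslant \eta$ for all $c\in C$, $d\in D$; setting $\alpha:=\max_{c\in C}\psi(c)$ and $\beta:=\min_{d\in D}\psi(d)$, which exist by compactness, gives $\alpha<\beta$ and the required inequality. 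The real case is complete, and the complex case follows by the correspondence above.

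The main obstacle is the geometric Hahn-Banach step separating $U$ from $E$; this is the content of the standard Mazur-type separation theorem, whose proof (via Minkowski functionals of $U-E$ and an algebraic Hahn-Banach extension) is classical and I would cite rather than reproduce. Everything else is formal manipulation: the reduction to the real scalar field, the verification that $E$ is convex and compact and avoids $0$, and the passage from strict separation of $U$ and $E$ to strict separation of $C$ and $D$ with attained extrema.
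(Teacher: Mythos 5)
Your proof is correct. Note, however, that the paper does not actually supply a proof of Theorem \ref{hb}: despite the sentence ``we discuss the proof for the sake of completeness,'' what follows the statement in the paper is a derivation of the consequence that is actually used later (namely that a convex $w^*$-compact set $L$ equals $\overline{\mathrm{co}}^{w^*}(K)$ whenever $K\subset L$ and $K$ realizes the support functionals of $L$), and the separation theorem itself is invoked as a standard fact. What you have written is the classical proof of that standard fact: reduce to real scalars via $\phi(z)=\psi(z)-i\psi(iz)$, pass to the compact convex set $E=D-C$, which misses $0$, insulate $E$ from $0$ by an open convex balanced neighborhood $U$, and apply the open-set (Mazur-type) separation theorem to $U$ and $E$. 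The steps all check out: $E$ is closed because it is compact and the space is Hausdorff, so its complement is an open neighborhood of $0$ containing a basic open convex balanced $U$; the functional $\psi$ produced by the separation theorem is necessarily nonzero, so $\eta:=\sup_{u\in U}\psi(u)>0$; and the uniform gap $\psi(d)-\psi(c)=\psi(d-c)\geqslant\eta$ yields $\max_C\psi<\min_D\psi$, with both extrema attained by compactness. One point worth making explicit is that Hausdorffness is genuinely needed (in a non-Hausdorff locally convex space, two disjoint compact convex singletons need not be separated by any continuous functional), and you are right to flag that you take it as part of the definition; the paper only ever applies the theorem to $(X^*,w^*)$, where this is automatic.
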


Any continuous functional $\phi$ on $Z$ such that there exist real numbers $\alpha<\beta$ as in the conclusion of the theorem are said to \emph{separate} $C$ and $D$. With this, we deduce that if $X$ is a Banach space, $K\subset L\subset X^*$ are $w^*$-compact, $L$ is convex, and $\max_{x^*\in L}\text{Re\ }x^*(x)=\max_{x^*\in K}\text{Re\ }x^*(x)$ for all $x\in X$, then $L=\overline{\text{co}}^{w^*}(K)$. To see this, we fix $x^*_0\in L$ and apply Theorem \ref{hb} with $Z=X^*$ endowed with its $w^*$-topology, $C=\overline{\text{co}}^{w^*}(K)$, $D=\{x^*_0\}$. We recall that a continuous functional $\phi:(X^*, w^*)\to \mathbb{F}$ is of the form $\phi(x^*)=x^*(x)$ for some $x\in X$.  If $x^*_0\notin C$, there would exist $x\in X$ such that $$\max_{x^*\in K}\text{Re\ }x^*(x)<\text{Re\ }x^*_0(x),$$ contradicting our hypotheses.  Therefore $x^*_0\in \overline{\text{co}}^{w^*}(K)$, and $L\subset \overline{\text{co}}^{w^*}(K)$.  The reverse inclusion is clear, and $L=\overline{\text{co}}^{w^*}(K)$.

\begin{proof}[Proof of Theorem \ref{maintheorem}] Note that if $D_K$ is the collection of Dirac measures, then by Theorem \ref{hb},  $B_{C(K)^*}=\overline{\text{co}}^{w^*}(S_\mathbb{F}D_K)$. This is also a consequence of the Krein-Milman theorem, since the unimodular multiples of Dirac measures are precisely the extreme points of $B_{C(K)^*}$. By Theorem \ref{theorem} and Lemmas \ref{lemma1} and \ref{lemma2}, $$Sz(C(K))=Sz(B_{C(K)^*})= \Gamma(Sz(S_\mathbb{F}D_K))=\Gamma(Sz(D_K))=\Gamma(i(K)).$$

\end{proof}

Samuel \cite{Samuel} showed that for a countable ordinal $\xi$, the Szlenk index of $C([1, \omega^{\omega^\xi}])$ is $\omega^{\xi+1}$.  H\'{a}jek and Lancien \cite{HL} deduced that the Szlenk index of $C([0,\xi])$ is $\omega_1\omega$ when $\omega_1\leqslant \xi < \omega_1\omega$. Brooker \cite{Brooker} gave a computation of the Szlenk index of $C([0,\xi])$ for every infinite ordinal $\xi$.  Brooker showed that for any infinite ordinal  $\xi$, the Szlenk index of $C([0,\xi])$ is $\omega^{\zeta+1}$, where $\zeta$ is such that $\omega^{\omega^\zeta}\leqslant \xi<\omega^{\omega^{\zeta+1}}$.  It is easy to show that for any ordinals $\zeta, \xi$ such that $\omega^\zeta \leqslant \xi <\omega^{\zeta+1}$, $i([0, \xi])=\zeta+1$.  From this, it follows that for any ordinals $\omega^{\omega^\zeta}\leqslant \xi<\omega^{\omega^{\zeta+1}}$, $\Gamma(i([0, \xi]))=\omega^{\zeta+1}$, so Theorem \ref{maintheorem} offers another computation of the Szlenk indices of all $C([0, \xi])$ spaces.  Theorem \ref{maintheorem} also offers another proof that $C(K)$ is an Asplund space if and only if $K$ is scattered, which was originally shown by Namioka and Phelps \cite{NP}.

In the next lemma, we will say that for a Banach space $F$ and a subset $C\subset F^*$ is $1$-\emph{norming} for $F$ provided that for every $x\in F$, $\sup_{x^*\in C}\text{Re\ }x^*(x) \geqslant \|x\|$.    

\begin{lemma}\begin{enumerate}[(i)]\item If $E,F$ are Banach spaces, $B\subset E^*$, $C\subset F^*$ are $w^*$-compact, and $f:B\to C$ is a $w^*$-$w^*$-continuous, Lipschitz surjection, then $Sz(C)\leqslant Sz(B)$.  \item If $E$ is a Banach space, $K,L\subset E^*$ are $w^*$-compact and non-empty, $F$ is a Banach space,  $C\subset F^*$ is $w^*$ compact, and $f:K+L\to C$ is $w^*$-$w^*$-continuous, Lipschitz, and surjective, then $Sz(\overline{\text{\emph{co}}}^{w^*}(C))\leqslant \max\{\Gamma(Sz(K)), \Gamma(Sz(L))\}$.    \end{enumerate}

\label{just}
\end{lemma}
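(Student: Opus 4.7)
The plan for (i) is to push a Lipschitz surjection through the Szlenk derivation by iterating a single-step inclusion. Writing $M$ for the Lipschitz constant of $f$, I would prove by transfinite induction that
$$s_\ee^\xi(C) \subset f\bigl(s_{\ee/(2M)}^\xi(B)\bigr)$$
for every $\ee > 0$ and every ordinal $\xi$. This gives $Sz_\ee(C) \leqslant Sz_{\ee/(2M)}(B) \leqslant Sz(B)$, whence $Sz(C) \leqslant Sz(B)$ by taking the supremum over $\ee > 0$.

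The base case is surjectivity. For the successor step, by monotonicity of $s_\ee$ in its argument it suffices to show $s_\ee(f(A)) \subset f(s_{\ee/(2M)}(A))$ for any $w^*$-compact $A \subset E^*$. Given $y^* \in s_\ee(f(A))$, Proposition \ref{szlenk facts}(vii) yields a net $(y_\lambda^*) \subset f(A)$ converging $w^*$ to $y^*$ with $\|y_\lambda^* - y^*\| > \ee/2$ for all $\lambda$. I would lift each $y_\lambda^*$ to some $x_\lambda^* \in A$ and, using $w^*$-compactness of $A$, pass to a subnet with $x_\lambda^* \overset{w^*}{\to} x^* \in A$. The $w^*$-$w^*$-continuity of $f$ then gives $f(x^*) = y^*$, and the Lipschitz bound gives $\|x_\lambda^* - x^*\| \geqslant \|y_\lambda^* - y^*\|/M > \ee/(2M)$ for all $\lambda$. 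Proposition \ref{szlenk facts}(viii) then places $x^*$ in $s_{\ee/(2M)}(A)$, so $y^* \in f(s_{\ee/(2M)}(A))$ as required. For the limit step, I would verify $f(\cap_{\zeta<\xi} A_\zeta) = \cap_{\zeta<\xi} f(A_\zeta)$ for any decreasing transfinite chain of non-empty $w^*$-compact sets, by applying the finite intersection property to the $w^*$-compact sets $f^{-1}(y^*) \cap A_\zeta$; combining this with the induction hypothesis closes the limit case.

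\textbf{Part (ii).} For (ii) I would combine (i) with Theorem \ref{theorem} and Proposition \ref{szlenk facts}. The set $K+L$ is the image of $K \times L$ under the $w^*$-continuous addition map, hence $w^*$-compact, so part (i) applies and gives $Sz(C) \leqslant Sz(K+L)$. Since $K + L \subset \overline{\text{co}}^{w^*}(K) + \overline{\text{co}}^{w^*}(L)$, Proposition \ref{szlenk facts}(iii) and (x) together yield
$$Sz(K+L) \leqslant Sz\bigl(\overline{\text{co}}^{w^*}(K) + \overline{\text{co}}^{w^*}(L)\bigr) = \max\bigl\{Sz(\overline{\text{co}}^{w^*}(K)),\, Sz(\overline{\text{co}}^{w^*}(L))\bigr\},$$
and Theorem \ref{theorem} rewrites the right-hand side as $\max\{\Gamma(Sz(K)), \Gamma(Sz(L))\}$. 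The maximum of two gamma numbers is a gamma number, so applying $\Gamma$ to $Sz(C) \leqslant \max\{\Gamma(Sz(K)), \Gamma(Sz(L))\}$ leaves the bound unchanged, and one final application of Theorem \ref{theorem} converts $\Gamma(Sz(C))$ into $Sz(\overline{\text{co}}^{w^*}(C))$, finishing the proof.

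The main obstacle I anticipate is the successor step in (i): one needs a single point of $s_{\ee/(2M)}(A)$ whose image under $f$ is $y^*$, but the natural two-net characterization in Proposition \ref{szlenk facts}(vi) only yields a pair of nets in $B$ whose $w^*$-limits $x^*$ and $w^*$ might differ (even though both map to $y^*$). The asymmetric characterization in (vii)--(viii) sidesteps this by working with a single net approaching $y^*$, at the price of a factor of $2$ in $\ee$, a loss that is harmless at the level of $Sz$.
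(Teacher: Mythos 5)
Your proof is correct and follows essentially the same route as the paper: for part (i) the paper simply cites Brooker's Lemma 2.5 and notes it generalizes, and your transfinite induction with the single-step lifting via Proposition \ref{szlenk facts}(vii)--(viii) and the finite-intersection-property argument at limits is exactly the standard argument behind that citation. Part (ii) reproduces the paper's chain of inequalities (part (i), monotonicity, Proposition \ref{szlenk facts}(x), and two applications of Theorem \ref{theorem}) essentially verbatim.
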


\begin{proof}$(i)$ The proof is the same as the proof Lemma $2.5$ of \cite{BrookerAsplund}, where it was shown in the case that $f$ is the restriction to $B$ of $S^*$, where $S:F\to E$ is a linear operator.  The proof is easily seen to hold in the more general setting of Lemma \ref{just}.

$(ii)$ If $\max\{\Gamma(Sz(K),\Gamma(Sz(L))\}=\infty$, there is nothing to show, so assume both $K$ and $L$ are $w^*$-fragmentable. Then $\max\{\Gamma(Sz(K)), \Gamma(Sz(L))\}=\omega^\xi$ for some ordinal $\xi$.   By $(i)$, Proposition \ref{szlenk facts}$(x)$, and Theorem \ref{theorem},  \begin{align*} Sz(C) & \leqslant Sz(K+L)\leqslant Sz(\overline{\text{co}}^{w^*}(K)+\overline{\text{co}}^{w^*}(L)) \\ & =\max\{Sz(\overline{\text{co}}^{w^*}(K), Sz(\overline{\text{co}}^{w^*}(L)\}=\max\{\Gamma(Sz(K)), \Gamma(Sz(L))\}=\omega^\xi.\end{align*} Another application of Theorem \ref{theorem} yields that $$Sz(\overline{\text{co}}^{w^*}(C))=\Gamma(Sz(C))\leqslant \omega^\xi.$$

\end{proof}

Let $L(Y^*,X)$ denote the bounded operators from $Y^*$ into $X$. Given $x\in X$, $y\in Y$, we let $x\otimes y:Y^*\to X$ be the operator given by $(x\otimes y)(y^*)=y^*(y)x$.  We recall that the injective tensor product $\tens$ of $X,Y$ can be viewed as the norm closure in $L(Y^*,X)$ of $X\otimes Y:=\{\sum_{i=1}^n x_i\otimes y_i:n\in \nn, x_i\in X, y_i\in Y\}$.   

Note that if $A:X\to X_1$, $B:Y\to Y_1$ are operators, one may define the operator $A\otimes B:X\hat{\otimes}_\ee Y\to X_1\hat{\otimes}_\ee Y_1$ to be the unique continuous extension of the operator from $X\otimes Y$ into $X_1\otimes Y_1$ given by $\sum_{i=1}^n x_i\otimes y_i\mapsto \sum_{i=1}^n Ax_i \otimes By_i$.  This is well-defined and linear with norm not exceeding $\|A\|\|B\|$.  To see this, note that if $u=\sum_{i=1}^n x_i\otimes y_i$ is viewed as a a member of $L(Y^*,X)$, $\sum_{i=1}^n Ax_i\otimes By_i$, as viewed as a member of $L(Y^*_1, X_1)$, is $AuB^*$.

\begin{proof}[Proof of Theorem \ref{maintheorem3}] Since $A,B$ factor through $A\otimes B$, it is clear that $$Sz(A\otimes B)\geqslant \max\{Sz(A), Sz(B)\}.$$   We focus on the reverse inequality.  Let $E=X\oplus_1 Y$, $M_1=A^*B_{X_1^*}\times \{0\}\subset E^*$, $M_2=B^*B_{Y_1^*}$, $f:M_1+M_2\to C:=f(M_1+M_2)\subset (X\hat{\otimes}_\ee Y)^*$ be given by $f(A^*x^*, B^*y^*)=A^*x^*\otimes B^*y^*$.  It is straightforward to check that $f$ is $w^*$-$w^*$ continuous, Lipschitz, and onto.  Using Theorem \ref{hb}, $\overline{\text{co}}^{w^*}(C)=(A\otimes B)^* B_{(X_1\hat{\otimes}_\ee Y_1)^*}$. Indeed, for any $u=\sum_{i=1}^n x_i\otimes y_i\in X\hat{\otimes}_\ee Y$, \begin{align*} \sup_{z^*\in (A\otimes B)^*B_{(X_1\hat{\otimes}_\ee Y_1)^*}} \text{Re\ }z^*(u) & = \|A\otimes B (u)\|=\max_{x^*\in B_{X_1^*}, y^*\in B_{Y_1^*}} \text{Re\ }\sum_{i=1}^n x^*(Ax_i)y^*(By_i) \\ &= \max_{(A^*x^*,B^*y^*)\in C} \text{Re\ }(A^*x^*\otimes B^*y^*)(u).  \end{align*}

By convexity of $M_1$ and $M_2$ and Lemma \ref{just}$(ii)$, $$Sz(A\otimes B) \leqslant \max\{Sz(M_1), Sz(M_2)\}=\max\{Sz(A), Sz(B)\}.$$

The final part follows by applying the first part to $I_X\otimes I_Y$.

\end{proof}

\section{Proof of Theorem \ref{maintheorem4}}

The purpose of this section is to provide a complete characterization of those ordinals which occur as the Szlenk index, the Bourgain $\ell_1$ index, or the Bourgain $c_0$ index of a Banach space.  We know that the Szlenk index of an Asplund space must be of the form $\omega^\xi$.   It turns out that the solution of this problem for the Szlenk index can be done in a way which simultaneously solves the problem of which ordinals occur as the $\ell_1$ or $c_0$ index of a Banach space.

If $\Lambda$ is a set, we let $\Lambda^{<\nn}$ denote the finite sequences in $\Lambda$, including the empty sequence $\varnothing$.    We order $\Lambda^{<\nn}$ by $s\preceq t$ if $s$ is an initial segment of $t$, and $s\prec t$ if $s$ is a proper initial segment of $t$.    We let $|t|$ denote the length of $t$. For $0\leqslant i\leqslant |t|$,  $t|_i$ denotes  the initial segment of $t$ having length $i$, and if $t\neq \varnothing$, $t^-$ denotes the maximal, proper initial segment of $t$.  We let $s\cat t$ denote the concatenation of $s$ with $t$. Given a subset $U$ of $\Lambda^{<\nn}$, we let $MAX(U)$ denote the members of $U$ which are maximal in $U$ with respect to $\prec$. If $T\subset \Lambda^{<\nn}$ is closed under taking initial segments, we say $T$ is a \emph{tree on } $\Lambda$.  We define $T'=T\setminus MAX(T)$.     We define the higher order derived trees $T^\xi$  by $T^0=T$, $T^{\xi+1}=(T^\xi)'$, and $T^\xi=\cap_{\zeta<\xi} T^\zeta$ when $\xi$ is a limit ordinal.  These are indeed trees if $T$ is. We define the order of $T$ by $o(T)=\min\{\xi: T^\xi=\varnothing\}$ if this class is non-empty, and $o(T)=\infty$ otherwise. We say $T$ is \emph{well-founded} if $o(T)<\infty$, and we say $T$ is \emph{ill-founded} otherwise.   A \emph{branch} of $T$ is a maximal linearly ordered subset of $T$.  We note that $o(T)=\infty$ if and only if $T$ admits an infinite branch if and only if $T$ admits an infinite chain.  We say $T\subset \Lambda^{<\nn}\setminus\{\varnothing\}$ is a $B$-\emph{tree} if $T\cup \{\varnothing\}$ is a tree.  Each of the notions above regarding derived trees and order can be relativized to $B$-trees.   If $\Lambda_1$, $\Lambda_2$ are two sets, we naturally identify $(\Lambda_1\times \Lambda_2)^{<\nn}$ with the subset of $\Lambda_1^{<\nn}\times \Lambda_2^{<\nn}$ consisting of those pairs of sequences $(s,t)$ such that $|s|=|t|$.   That is, we identify the sequence of pairs $((a_i, b_i))_{i=1}^n$ with the pair of sequences $((a_i)_{i=1}^n, (b_i)_{i=1}^n)$, and we identify $\varnothing$ with $(\varnothing, \varnothing)$.   Given subsets $S,T$ of $\Lambda_1^{<\nn}$, $\Lambda_2^{<\nn}$, respectively, we say $\theta:S\to T$ is \emph{monotone} provided that for any $s\prec s_1$, $s, s_1\in S$, $\theta(s)\prec \theta(s_1)$.      We recall that for any $B$-trees $S$ and $T$, $o(S)\leqslant o(T)$ if and only if there exists a monotone, length-preserving function $\theta:S\to T$, where we obey the convention that $\xi<\infty$ for any ordinal $\xi$.  Since we are unaware of a proof of this fact in the literature, we include one for completeness.

\begin{lemma} For $B$-trees $S,T$, there exists a monotone function $\theta:S\to T$ such that $|\theta(s)|=|s|$ for all $s\in S$ if and only if $o(S)\leqslant o(T)$.

\label{monotone lemma}
\end{lemma}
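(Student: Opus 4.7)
The plan is to prove the two directions separately: the forward direction by transfinite induction on $\xi$ showing $\theta(S^\xi)\subset T^\xi$, and the reverse direction by recursion on sequence length using a rank function on the derived trees. For the forward direction, the base and limit cases are formal; for the successor step, if $s\in S^{\xi+1}$ is not maximal in $S^\xi$ then some $s_1\succ s$ lies in $S^\xi$, the inductive hypothesis gives $\theta(s_1)\in T^\xi$, and monotonicity of $\theta$ forces $\theta(s)\prec \theta(s_1)$, placing $\theta(s)\in T^{\xi+1}$. If $o(T)=\infty$ the inequality $o(S)\leqslant o(T)$ is immediate by convention; otherwise, taking $\xi=o(T)$ gives $\theta(S^{o(T)})\subset T^{o(T)}=\varnothing$, so $S^{o(T)}=\varnothing$ and $o(S)\leqslant o(T)$.

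For the reverse direction, if $o(T)=\infty$ then $T$ contains an infinite chain (by the fact recalled just before the lemma), from which by filling in missing lengths with initial segments I extract $t_1\prec t_2\prec\cdots$ with $|t_n|=n$, and the map $\theta(s):=t_{|s|}$ is monotone and length-preserving. Assume henceforth $o(T)<\infty$, so $o(S)<\infty$ as well. For $U\in\{S,T\}$ and $u\in U$, define the rank $\rho_U(u):=\max\{\xi:u\in U^\xi\}$, which is well-defined because the derived sets are decreasing and eventually empty. Two routine transfinite inductions then establish the identities
\[
\rho_U(u)=\sup\{\rho_U(v)+1:v\succ u,\,|v|=|u|+1,\,v\in U\} \quad\text{and}\quad o(U)=\sup\{\rho_U(v)+1:v\in U,\,|v|=1\},
\]
where empty suprema are $0$; in particular, $\rho_U$ strictly decreases along $\prec$-chains.

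The construction of $\theta$ now proceeds by recursion on $|s|$, preserving the invariant $\rho_T(\theta(s))\geqslant \rho_S(s)$. For $|s|=1$, the hypothesis gives $\rho_S(s)+1\leqslant o(S)\leqslant o(T)=\sup\{\rho_T(t)+1:|t|=1\}$, and I choose $\theta(s)$ to be a length-$1$ element of $T$ witnessing this bound. For $|s|\geqslant 2$, with $\theta(s^-)$ already defined, the strict decrease of $\rho_S$ along chains combined with the invariant gives $\rho_S(s)+1\leqslant \rho_S(s^-)\leqslant \rho_T(\theta(s^-))=\sup\{\rho_T(c)+1:c\text{ a child of }\theta(s^-)\text{ in }T\}$, and I let $\theta(s)$ be any such child $c$ meeting the bound. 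Monotonicity and length preservation then hold automatically by construction.

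The main subtlety I anticipate is that a supremum of ordinals may exceed a limit ordinal without any individual term doing so, so from a supremum bound one cannot in general extract a single witness. The $+1$ shift in the rank identities is tailored to sidestep this: the quantity I must bound, $\rho_S(s)+1$, is always a successor, and a successor bounded by $\sup_i \beta_i$ is bounded by some individual $\beta_i$. Once this wrinkle is accounted for, every other step in the recursion is routine.
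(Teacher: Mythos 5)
Your proof is correct. For the substantive direction ($o(S)\leqslant o(T)$ implies the existence of $\theta$), your argument is at heart the same as the paper's, but with different bookkeeping: the paper runs a transfinite induction on the lexicographically ordered pairs $(o(T),o(S))$ and decomposes both trees at the root into the subtrees of continuations $S(s)$, whereas you package the same information into the node rank $\rho_U(u)=\max\{\xi: u\in U^\xi\}$ (which equals the paper's $o(U(u))$) and run an ordinary recursion on $|s|$ maintaining the invariant $\rho_T(\theta(s))\geqslant\rho_S(s)$. The pivotal step is identical in both: a successor ordinal dominated by a supremum is dominated by one of its terms, which is exactly the paper's ``claim'' that each length-one $s\in S$ admits a length-one $t\in T$ with $o(S(s))\leqslant o(T(t))$. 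Your formulation avoids the induction on $\ord^2$ and is arguably cleaner. You also do something the paper's proof does not: you explicitly prove the ``only if'' direction, via the (correct) observation that a monotone, length-preserving $\theta$ satisfies $\theta(S^\xi)\subset T^\xi$ for every $\xi$; the paper leaves this converse implicit. One small point worth spelling out if you write this up: in your successor step for that direction, the witness $s_1\succ s$ in $S^\xi$ need not be an immediate successor, but monotonicity still gives $\theta(s)\prec\theta(s_1)$ with both in $T^\xi$, which is all you need; and in your rank identity restricted to children $v$ with $|v|=|u|+1$, you are implicitly using that each derived set $U^\xi$ is again a $B$-tree, hence closed under passing to non-empty initial segments.
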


\begin{proof} If $T$ is ill-founded, then there exists an infinite branch $(x_n)_{n\in \nn}$.  Since branches are maximal by definition, $|x_n|=n$ for each $n\in \nn$.  For any tree $S$, we may then define $\theta(s)=x_{|s|}$.   This is monotone and preserves lengths.

We next prove by induction on $A=\{(\zeta, \xi)\in \ord^2: \xi\leqslant \zeta\}$ with its lexicographical ordering that if $S,T$ are $B$-trees with $o(S)=\xi$ and $o(T)=\zeta$, then there exists a monotone, length-preserving function $\theta:S\to T$.  In the proof, if $S$ is a $B$-tree on the set $\Lambda$ and $s$ is a finite sequence in $\Lambda$, let $S(s)$ denote those non-empty members $u$ of $\Lambda^{<\nn}$ such that $s\cat u\in S$. Note that $S(s)$ is a $B$-tree, well-founded if $S$ is.     Suppose that $(\zeta, \xi)\in A$ and the result holds for every pair less than $(\zeta, \xi)$.   Let $S,T$ be $B$-trees (say on the set $\Lambda$) such that $o(S)=\xi$ and $o(T)=\zeta$.   Let $R$ denote the sequences in $S$ having length $1$.    If $R=\varnothing$, $S=\varnothing$, and the empty map from $S$ to $T$ is monotone and preserves lengths.   Assume that $R\neq \varnothing$.    Note that for any $s\in S$, $o(S(s))<\xi$.  Indeed, for any ordinal $\eta$, $S^\eta(s)=(S(s))^\eta$, which is easily seen by induction.  If $o(S(s))=0$, then since $s\in S$, $o(S)>0$.   If $o(S(s))=\eta+1$, $S^\eta(s)\neq \varnothing$, whence $s\in S^{\eta+1}$ and $o(S)>\eta+1$.  If $o(S(s))=\eta$ and $\eta$ is a limit ordinal, then for any $\gamma<\eta$, $S^\gamma(s)\neq \varnothing$, whence $s\in S^{\gamma+1}$.  Since this holds for every $\gamma<\eta$, $s\in \cap_{\gamma<\eta}S^\gamma=S^\eta$, and $o(S)>\eta$.    

We claim that for any $s\in R$, there exists a sequence  $t\in T$ having length $1$ such that $o(S(s))\leqslant o(T(t))$.    If it were true that for some $s\in S$, $o(S(s))>o(T(t))$ for every $t\in T$, then $o(S)>o(T)$.  Indeed, since $T^{o(T(t))}(t)=\varnothing$, $t$ has no proper extensions in $T^{o(T(t))}$, and $t\notin T^{o(T(t))+1}$.  Then if $\eta=(\sup_{t\in T}o(T(t)))+1$, $T^\eta=\varnothing$, since $T^\eta$ contains no sequences of length $1$, and must therefore be empty.  From this it follows that $\zeta\leqslant \eta$.       But $o(S(s))>\sup_{t\in T}o(T(t))$ implies that $\xi=o(S)>o(S(s))\geqslant \sup_{t\in T} o(T(t))+1=\eta\geqslant \zeta$, a contradiction.   This yields the claim.  

For every $s\in R$, let $t_s$ be a sequence in $T$ of length $1$ such that $o(S(s))\leqslant o((T(t_s))$.  By the inductive hypothesis, there exists $\theta_s:S(s)\to T(t_s)$ which is monotone and preserves lengths.  We then define $\theta:S\to T$ by letting $\theta(s)=t_s$ and if $s'$ is a sequence in $S$, $\theta(s')=t_s\cat \theta_s(s'')$, where $s\in R$ and $s''$ are the unique sequences such that $s$ has length $1$ and $s'=s\cat s''$.    The function $\theta$ defined in this way is monotone and preserves lengths.

\end{proof}

Given a well-founded tree $T$ and $t\in T$, we let $T(t)=\{s\in T: t\preceq s\}$. Sets of this form are called \emph{wedges}.  Then the \emph{coarse wedge topology} on a non-empty tree, with which all trees in this section will be endowed, is the topology for which the  wedges and their complements form a subbase. Clearly this is a Hausdorff topology on any tree. Indeed, if $s\neq t$ are two members of a tree, then either neither is an initial segment of the other, or we may assume $s\prec t$.  In this case, $t\in T(t)$, $s\in T\setminus T(t)$ witnesses that $s$ and $t$ may be separated by open sets.  If $T$ is well-founded, then $T$ is compact with its coarse wedge topology.  This was stated for a more general notion of tree in \cite{Nyikos}, and there the terminology ``branch complete'' was used in place of ``well-founded.''  Since our definition of tree is more restrictive than the one in \cite{Nyikos}, and this greater restriction makes for a simpler special case of the more involved proof in \cite{Nyikos},  we include the proof.   

\begin{proposition} If $T$ be a non-empty, well-founded tree endowed with its coarse wedge topology, then $T$ is compact.  

\end{proposition}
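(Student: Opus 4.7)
The plan is to apply the Alexander subbase theorem to the natural subbase consisting of the wedges $T(t)$ and their complements $T\setminus T(s)$, so that it suffices to show any cover $\mathcal{U}$ of $T$ by such subbase elements admits a finite subcover. Arguing by contradiction, I would write $\mathcal{U}=\{T(t):t\in A\}\cup\{T\setminus T(s):s\in B\}$ for suitable $A,B\subset T$ and suppose that no finite subfamily of $\mathcal{U}$ covers $T$.

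The core of the argument will be an inductive construction of an infinite chain $\varnothing=t_0\prec t_1\prec t_2\prec\cdots$ in $T$ with $|t_n|=n$, which directly contradicts the hypothesis that $T$ is well-founded. The invariant to maintain is that no finite subfamily of $\mathcal{U}$ covers $T(t_n)$; this holds at $n=0$ since $T(\varnothing)=T$. For the inductive step I would fix some member of $\mathcal{U}$ containing $t_n$. If it is a wedge $T(t)$ with $t\in A$, then $t\preceq t_n$ forces $T(t)\supseteq T(t_n)$, giving a one-element subcover of $T(t_n)$ and a contradiction. Hence the covering element must have the form $T\setminus T(s)$ with $s\in B$ and $s\not\preceq t_n$. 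If $s$ were incomparable to $t_n$, then $T(s)\cap T(t_n)=\varnothing$ and $T\setminus T(s)$ alone would cover $T(t_n)$, again a contradiction. So $t_n\prec s$, and the natural choice will be $t_{n+1}:=s|_{n+1}$.

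The key verification is that $\mathcal{U}$ still has no finite subcover of $T(t_{n+1})$. For this I would observe that any length-$(n+1)$ extension $u$ of $t_n$ other than $t_{n+1}$ is incomparable to $s$, since $u$ and $t_{n+1}$ are distinct length-$(n+1)$ extensions of $t_n$; consequently $T(u)\cap T(s)=\varnothing$, so $T(u)\subseteq T\setminus T(s)$. Combined with $t_n\in T\setminus T(s)$, this shows $T(t_n)\setminus T(t_{n+1})\subseteq T\setminus T(s)$. Therefore any finite subcover of $T(t_{n+1})$ augmented by $T\setminus T(s)$ would be a finite subcover of $T(t_n)$, contradicting the inductive hypothesis. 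The main obstacle is exactly this case analysis at the successor step: one must recognise that the covering element of $t_n$ has to be a complement with $s$ a strict extension of $t_n$, and then choose $t_{n+1}$ as the length-$(n+1)$ prefix of $s$ so that $T\setminus T(s)$ absorbs the rest of $T(t_n)\setminus T(t_{n+1})$. Once the chain is built, the infinite branch $t_0\prec t_1\prec\cdots$ contradicts $o(T)<\infty$, completing the proof.
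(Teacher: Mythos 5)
Your proof is correct, but it takes a genuinely different route from the paper's. Both arguments invoke the Alexander subbase theorem, but the paper then argues directly on the global structure of the cover: writing the cover as $\{T(t):t\in S_1\}\cup\{T\setminus T(t):t\in S_2\}$, it observes that either $S_2=\varnothing$ (so $T(\varnothing)$ alone covers), or $S_2$ contains two incomparable elements (so two complements cover), or $S_2$ is a chain, hence finite by well-foundedness, and its maximum $t_2$ together with a wedge containing $t_2$ yields a two-element subcover. You instead argue by contradiction and localize: assuming no finite subcover, you recursively produce $t_0\prec t_1\prec\cdots$ with the invariant that $T(t_n)$ has no finite subcover, the key point being that the subbase element covering $t_n$ must be of the form $T\setminus T(s)$ with $t_n\prec s$, and that $T(t_n)\setminus T(s|_{n+1})\subset T\setminus T(s)$ so the invariant passes to $t_{n+1}=s|_{n+1}$; the resulting infinite chain contradicts well-foundedness. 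Your case analysis at the successor step (the covering element cannot be a wedge, and $s$ cannot be incomparable to $t_n$) and the absorption of $T(t_n)\setminus T(t_{n+1})$ into $T\setminus T(s)$ are both verified correctly. The paper's argument is shorter and yields the sharper quantitative fact that every subbasic cover admits a subcover of cardinality at most two, while yours is a more generic descent argument whose shape would adapt to other tree topologies where the ``complements form a finite chain'' observation is not available; both use well-foundedness in essentially dual ways (every chain is finite versus no infinite chain can be constructed).
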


\begin{proof}   We will use the Alexander subbase theorem, which states that in order to see that $T$ is compact with its coarse wedge topology, it is sufficient to prove that any cover of $T$ by members of a subbase admits a finite subcover.  We will use the subbase for the coarse wedge topology consisting of wedges and their complements.  To that end, fix a cover of $T$ $$\mathcal{U}=\{T(t): t\in S_1\}\cup \{T\setminus T(t): t\in S_2\}.$$  If $S_2=\varnothing$, there exists $t\in S_1$ such that $\varnothing\in T(t)$.  Of course this means $t=\varnothing$, and $T=T(\varnothing)$.  Thus $\{T(\varnothing)\}\subset\mathcal{U}$ is a subcover with cardinality $1$. Thus we may assume $S_2\neq \varnothing$.  If $S_2$ contains two incomparable members $t_1, t_2$, then $T=(T\setminus T(t_1))\cup (T\setminus T(t_2))$, and $\{T(t_1), T(t_2)\}\subset \mathcal{U}$ is a subcover of cardinality $2$.  Thus we may assume $S_2$ is a non-empty chain in $T$.   Since $T$ is well-founded, every chain in $T$ is finite. Since $S_2$ is finite, it has a maximum member, say $t_2$.  Since $t_2\notin \cup_{t\in S_2}T\setminus T(t)$, there exists $t_1\in S_1$ such that $t_2\in T(t_1)$.  Then $T=T(t_1)\cup (T\setminus T(t_2))$, and $\{T(t_1), T\setminus T(t_2)\}\subset \mathcal{U}$ is a subcover of cardinality $2$.

\end{proof}

Note that $t$ lies in the Cantor-Bendixson derivative of $T$ if and only if $\{s\in T: s^-=t\}$ is infinite.   For any derived tree $T^\xi$, the relative coarse wedge topology on $T^\xi$ as a subset of $T$ coincides with the coarse wedge topology of $T^\xi$.    Let us say that the tree $T$ is \emph{full} if for every ordinal $\xi$ and every $t\in T^{\xi+1}$, $\{s\in T^\xi: s^-=t\}$ is infinite.  It follows from this condition that the $\xi^{th}$ derived tree coincides with the $\xi^{th}$ Cantor-Bendixson derivative of $T$ for every $\xi$.

In this section, a $1$-\emph{unconditional basis for} the  Banach space $X$ will be an unordered collection $(e_i)_{i\in I}$ of vectors in $X$ having dense span in $X$ such that for every finite subset $A$ of $I$, every set of unimodular scalars $(\ee_i)_{i\in A}$, and all scalars $(a_i)_{i\in A}$, $$\|\sum_{i\in A} \ee_ia_ie_i\|=\|\sum_{i\in A}a_ie_i\|.$$  Note that in this case, for every non-empty subset $A$ of $I$, the projection $P_A:X\to X$ given by $P_A\sum_{i\in I}a_ie_i=\sum_{i\in A} a_ie_i$ is a projection of norm $1$. If $x^*, y^*\in X^*$ are such that there exists a subset $A$ of $I$ such that $x^*=P^*_A y^*$, we will say that $x^*$ is a \emph{suppression projection} of $y^*$, and in this case, $\|x^*\|\leqslant \|y^*\|$.    For any $\ee=(\ee_i)_{i\in I}\subset S_\mathbb{F}^I$, the operator $U_\ee:X\to X$ given by $U_\ee\sum_{i\in I} a_ie_i = \sum_{i\in I}\ee_ia_ie_i$ is an isometry.  We note that if $(x^*_\lambda)\subset X^*$ is a bounded net converging $w^*$ to $x^*$ and $\ee_\lambda=(\ee_{\lambda, i})_{i\in I}$ is a net in $S_\mathbb{F}^I$ converging to $\ee$ in the product topology, $U_{\ee_\lambda}^*x^*_\lambda\underset{w^*}{\to} U_\ee^* x^*$.  Let $(e_i^*)_{i\in I}$ denote the biorthogonal functionals to $(e_i)_{i\in I}$.  Note that $X^*$ is identifiable with all formal (not necessarily norm converging) series $\sum_{i\in I} a_ie_i^*$ such that $\sup_{|A|<\infty} \|\sum_{i\in A}a_ie_i^*\|<\infty$. For $x^*\in X^*$, we let $\supp(x^*)=\{i\in I: x^*(e_i)\neq 0\}$. Note that $X^*$ can be endowed with a lattice structure where the positive elements are those formal series $\sum_{i\in I}a_ie_i^*\in X^*$ with $a_i\geqslant 0$ for all $i$.  We may define $|\cdot|$ on $X^*$ by $|\sum_{i\in I} a_ie_i^*|=\sum_{i\in I}|a_i|e_i^*$, and note that this function is $w^*$-$w^*$-continuous on $B_{X^*}$. Given a subset $M$ of $X^*$, we let $|M|=\{|f|: f\in M\}$. Note that if $M$ is $w^*$-compact, so is $|M|$. We say $M\subset X^*$ is \emph{unconditional} if $M=\{U_\ee^* x^*: x^*\in M, \ee\in S_\mathbb{F}^I\}$.    The \emph{unconditionalization} of $M$ is $\{U^*_\ee x^*: \ee\in S_\mathbb{F}^I, x^*\in M\}$.  Our remarks imply that the unconditionalization of $M$ is $w^*$-compact if $M$ is.  

\begin{lemma} For any $M\subset [e_i^*:i\in I]$ which is $w^*$-compact, any ordinal $\zeta$,  any $0<\ee<\ee_1$, $$|s_{\ee_1}^\zeta(M)|\subset s^\zeta_{\ee/2}(|M|).$$ In particular, $Sz(M)= Sz(|M|)$.  

Here, $[e_i^*:i\in i]$ denotes the norm closure in $X^*$ of the span of $\{e^*_i: i\in I\}$. 
  
\label{unconditionalization}
\end{lemma}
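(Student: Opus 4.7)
The plan is to proceed by transfinite induction on $\zeta$, exploiting $1$-unconditionality and the norm-one suppression projections $P_A^*$. The base case $\zeta=0$ is immediate, and for a limit $\zeta$ one has
\[ |s_{\ee_1}^\zeta(M)|=\left|\bigcap_{\zeta'<\zeta}s_{\ee_1}^{\zeta'}(M)\right|\subset\bigcap_{\zeta'<\zeta}|s_{\ee_1}^{\zeta'}(M)|\subset\bigcap_{\zeta'<\zeta}s_{\ee/2}^{\zeta'}(|M|)=s_{\ee/2}^\zeta(|M|), \]
the middle step being the inductive hypothesis applied at each $\zeta'<\zeta$.

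In the successor step, given $x^*\in s_{\ee_1}^{\zeta+1}(M)$, Proposition \ref{szlenk facts}$(vii)$ furnishes a net $(x_\lambda^*)\subset s_{\ee_1}^\zeta(M)$ with $x_\lambda^*\to x^*$ in $w^*$ and $\|x_\lambda^*-x^*\|>\ee_1/2$. The inductive hypothesis gives $|x_\lambda^*|\in s_{\ee/2}^\zeta(|M|)$, and the $w^*$-$w^*$-continuity of $|\cdot|$ on bounded sets (noted before the lemma) yields $|x_\lambda^*|\to|x^*|$ in $w^*$. If $\||x_\lambda^*|-|x^*|\|>\ee/2$ holds cofinally, Proposition \ref{szlenk facts}$(viii)$ immediately gives $|x^*|\in s_{\ee/2}^{\zeta+1}(|M|)$.

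The hard part will be ruling out the collapsed regime in which $\||x_\lambda^*|-|x^*|\|\leqslant\ee/2$ eventually, for there the signed witnesses remain norm-far from $x^*$ while the absolute values almost converge in norm. I would close this gap by using $M\subset[e_i^*:i\in I]$: fix $\eta\in(0,(\ee_1-\ee)/6)$ and a finite $F\subset I$ with $\|P_{I\setminus F}^* x^*\|<\eta$. Since $P_F^*$ has finite-dimensional range, $\|P_F^*(x_\lambda^*-x^*)\|\to 0$, so this is $<\eta$ eventually. Because $\|P_{I\setminus F}^* g\|$ depends only on $|g|$ by $1$-unconditionality, $\|P_{I\setminus F}^*|x^*|\|=\|P_{I\setminus F}^* x^*\|<\eta$, and in the collapsed regime
\[ \|P_{I\setminus F}^* x_\lambda^*\|=\|P_{I\setminus F}^* |x_\lambda^*|\|\leqslant \||x_\lambda^*|-|x^*|\|+\|P_{I\setminus F}^* |x^*|\|\leqslant \ee/2+\eta. \]
Splitting $x_\lambda^*-x^*$ via $P_F^*+P_{I\setminus F}^*$ and applying the triangle inequality then give $\|x_\lambda^*-x^*\|<\ee/2+3\eta<\ee_1/2$ eventually, contradicting the lower bound. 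Thus the cofinal condition must hold and the induction closes.

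For the concluding assertion, the inclusion just proved yields $Sz_{\ee_1}(M)\leqslant Sz_{\ee/2}(|M|)\leqslant Sz(|M|)$ for every $0<\ee<\ee_1$, hence $Sz(M)\leqslant Sz(|M|)$. The reverse inequality is immediate from Lemma \ref{just}$(i)$ applied to the surjective, $w^*$-$w^*$-continuous, $1$-Lipschitz map $|\cdot|:M\to|M|$ (the Lipschitz bound $\||f|-|g|\|\leqslant\|f-g\|$ following from $||a_i|-|b_i||\leqslant|a_i-b_i|$ and $1$-unconditionality).
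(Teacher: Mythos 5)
Your proof is correct, and the successor step takes a genuinely different route from the paper's. The paper writes $|x^*_\lambda|=U_{\ee_\lambda}^*x^*_\lambda$ for unimodular multipliers $\ee_\lambda\in S_\mathbb{F}^I$, passes to a subnet with $\ee_\lambda\to\ee$ in the product topology, and uses $x^*\in[e_i^*:i\in I]$ to get $U_{\ee_\lambda}^*x^*\to|x^*|$ in norm; since each $U_{\ee_\lambda}^*$ is an isometry, this transfers the separation in one stroke, giving $\limsup_\lambda\||x^*_\lambda|-|x^*|\|\geqslant\limsup_\lambda\|x^*_\lambda-x^*\|\geqslant\ee_1/2$ (slightly stronger than your cofinal bound of $\ee/2$, though both suffice). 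You instead run a dichotomy and rule out the collapsed regime by a head/tail splitting with $P_F^*+P_{I\setminus F}^*$: the hypothesis $M\subset[e_i^*:i\in I]$ enters through the choice of the finite set $F$ with small tail rather than through norm-continuity of $\ee\mapsto U_\ee^*x^*$, and the tail of $x^*_\lambda$ is controlled via $|x^*_\lambda|$ by unconditionality. Both arguments are sound and both make essential use of the hypothesis that $M$ lies in the closed span of the coordinate functionals. For the final assertion, the paper first replaces $M$ by its unconditionalization $M_1$ so that $|M|=|M_1|\subset M_1$, which yields $Sz(|M|)\leqslant Sz(M_1)$; your appeal to Lemma \ref{just}$(i)$ applied to $|\cdot|:M\to|M|$ is arguably cleaner, since it gives $Sz(|M|)\leqslant Sz(M)$ for the original $M$ directly. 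One small point: your one-line justification of the $1$-Lipschitz bound $\||f|-|g|\|\leqslant\|f-g\|$ tacitly uses that a $1$-unconditional basis is $1$-multiplier-unconditional in the dual (i.e., $|c_i|\leqslant|d_i|$ pointwise implies $\|\sum_i c_ie_i^*\|\leqslant\|\sum_i d_ie_i^*\|$), which follows from the usual convexity argument; this is standard and consistent with the paper's use of suppression projections, but it deserves a sentence.
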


\begin{proof} If $M_1$ is the unconditionalization of $M$, $M\subset M_1$ and $|M|=|M_1|$. If we can prove that for any ordinal $\zeta$, any $0<\ee<\ee_1$, $$|s_{\ee_1}^\zeta(M_1)|\subset s^\zeta_{\ee/2}(|M_1|),$$ then $$|s_{\ee_1}^\zeta(M)|\subset |s_{\ee_1}^\zeta(M_1)|\subset s^\zeta_{\ee/2}(|M_1|)=s^\zeta_{\ee/2}(|M|). $$  Thus we may replace $M$ by its unconditionalization and assume $M$ itself is unconditional.

 Since $M$ is unconditional,  $|M|\subset M$ and $Sz(|M|)\leqslant Sz(M)$.    We will prove the first statement of the lemma by induction.  The base and limit cases are trivial.   Suppose the result holds for some $\zeta$ and suppose also that $x^*\in s_{\ee_1}^{\zeta+1}(M)$.  We may fix a net $(x^*_\lambda)\subset s^\zeta_{\ee_1}(M)$ converging $w^*$ to $x^*$ such that for all $\lambda$, $\|x^*_\lambda-x^*\|>\ee_1/2$.  For every $\lambda$, fix $\ee_\lambda=(\ee^\lambda_i)_{i\in I}\subset S_\mathbb{F}^I$ such that $U_{\ee_\lambda}^* x^*_\lambda=|x^*_\lambda|\in s_{\ee/2}^\zeta(|M|)$. By the inductive hypothesis, $U_{\ee_\lambda}^* x^*_\lambda\in s_{\ee/2}^\zeta(|M|)$.   By passing to a subnet, we may assume there exists $\ee\in S_\mathbb{F}^I$ such that $\ee_\lambda\to \ee$ in the product topology and note that necessarily $U_\ee^* x^*=|x^*|$.   Moreover, since $x^*$ lies in the norm closure of the span of $(e_i^*:i\in I)$, it follows that $U_{\ee_\lambda}^* x^*\to |x^*|$ in norm.   From this it follows that \begin{align*} \underset{\lambda}{\lim\sup} \||x^*_\lambda|-|x^*|\| &  \geqslant \underset{\lambda}{\lim\sup}\|U_{\ee_\lambda}^*x^*_\lambda- U^*_{\ee_\lambda} x^*\| - \|U^*_{\ee_\lambda} x^*- |x^*|\| \\ & = \underset{\lambda}{\lim\sup}\|x^*_\lambda-x^*\|\geqslant \ee_1/2>\ee/2.\end{align*}    This yields that $|x^*|\in s^{\zeta+1}_{\ee/2}(|M|)$.

\end{proof}

We include here another easy fact.

\begin{claim} Suppose that $X$ is a Banach space and $(M_i)_{i\in I}$ is a collection of $w^*$-compact subsets of $X^*$ such that for every $i \neq j\in I$, $\{0\}=M_i\cap M_j$.    Let $M=\underset{i\in I}{\bigcup}M_i$. Suppose that any net $(x^*_\lambda)\subset M$ such that for any $i\in I$, $(x^*_\lambda)$ is eventually in $M\setminus (M_i\setminus\{0\})$ converges $w^*$ to $0$.    \begin{enumerate}[(i)]\item   $M$ is $w^*$-compact.  \item For any ordinal $\mu$ and any $\ee>0$, $s_\ee^\mu(M)\setminus\{0\} \subset \underset{i\in I}{\bigcup} s_\ee^\mu(M_i)$. Consequently, for any $\ee>0$, if $\sup_{i\in I}Sz_\ee(M_i)<\infty$, then $Sz_\ee(M)\leqslant (\sup_{i\in I}Sz_\ee(M_i))+1$.  Moreover, if for some $\xi$, some $\ee>0$, and every $i\in I$, $s^\xi_\ee(M_i)\subset \{0\}$, $Sz_\ee(M)\leqslant \xi+1$.

\end{enumerate}

\label{trivial claim}
\end{claim}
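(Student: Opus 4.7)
The plan is to reduce everything to one structural observation extracted from the convergence hypothesis:

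\textbf{Local Structure Lemma.} \emph{For every $x^*\in M\setminus\{0\}$ with $x^*\in M_{i_0}$, there is a $w^*$-open neighborhood $V$ of $x^*$ such that $V\cap M\subset M_{i_0}$.}

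I would prove this by contradiction. If it fails, then indexing $w^*$-neighborhoods of $x^*$ by reverse inclusion, one can select $y^*_V\in (V\cap M)\setminus M_{i_0}$, giving a net $y^*_V\xrightarrow{w^*}x^*$ entirely inside $M\setminus M_{i_0}$ (hence outside $M_{i_0}\setminus\{0\}$, since $0\in M_{i_0}$). For each $i\neq i_0$, the net cannot be frequently in $M_i\setminus\{0\}$: otherwise a subnet would lie in the $w^*$-compact $M_i$ and force $x^*\in M_i\cap M_{i_0}=\{0\}$, contradicting $x^*\neq 0$. So for every $i\in I$ the net is eventually in $M\setminus(M_i\setminus\{0\})$, and the standing hypothesis yields $y^*_V\xrightarrow{w^*}0$, again contradicting $x^*\neq 0$.

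For part (i), norm-boundedness of $M$ follows from the uniform boundedness principle applied to any hypothetical sequence with $\|x^*_n\|\to\infty$ chosen from distinct $M_{i_n}$'s (such a sequence is eventually outside each $M_i\setminus\{0\}$, hence $w^*$-null, hence bounded). For $w^*$-closedness, given a net $(x^*_\lambda)\subset M$ with $x^*_\lambda\xrightarrow{w^*}x^*$, either some subnet is cofinally in a single $M_{i_0}$ (so $x^*\in M_{i_0}\subset M$ by $w^*$-compactness), or for every $i$ the net is eventually outside $M_i\setminus\{0\}$, and the hypothesis gives $x^*=0\in M$.

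For part (ii), I would argue by transfinite induction on $\mu$, the base case being immediate. For a limit ordinal $\mu$ and $x^*\in s_\ee^\mu(M)\setminus\{0\}$, the inductive hypothesis supplies for each $\zeta<\mu$ some $i(\zeta)$ with $x^*\in s_\ee^\zeta(M_{i(\zeta)})\subset M_{i(\zeta)}$; since $x^*\neq 0$ and the $M_i$'s pairwise meet only at $0$, the index is forced to be a single $i^*$, whence $x^*\in\bigcap_{\zeta<\mu}s_\ee^\zeta(M_{i^*})=s_\ee^\mu(M_{i^*})$. For the successor case $\mu=\zeta+1$, apply the Local Structure Lemma to get a $w^*$-neighborhood $V_0$ of $x^*$ with $V_0\cap M\subset M_{i_0}$; shrink so that $0\notin V_0$. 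For any $w^*$-neighborhood $V\subset V_0$ of $x^*$, the inductive hypothesis plus $0\notin V$ gives $V\cap s_\ee^\zeta(M)\subset V\cap\bigcup_i s_\ee^\zeta(M_i)\subset V_0\cap M\subset M_{i_0}$, and hence $V\cap s_\ee^\zeta(M)\subset s_\ee^\zeta(M_{i_0})$. Since $\mathrm{diam}(V\cap s_\ee^\zeta(M))>\ee$, the same holds for $V\cap s_\ee^\zeta(M_{i_0})$, and running over all such $V$ gives $x^*\in s_\ee^{\zeta+1}(M_{i_0})$.

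The consequences then drop out: if $s_\ee^\xi(M_i)\subset\{0\}$ for every $i$, then $s_\ee^\xi(M)\subset\{0\}$ by what was just proved, and since $s_\ee(\{0\})=\varnothing$ (a singleton has $w^*$-neighborhoods of diameter $0$), we conclude $s_\ee^{\xi+1}(M)=\varnothing$. The one nontrivial step is the Local Structure Lemma; both the limit case of the induction (handled by injectivity of the $M_i\mapsto M_i$ assignment on nonzero points) and the bookkeeping in the successor step depend on it, but once it is in hand everything else is routine.
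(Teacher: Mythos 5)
Your proof is correct, and it follows the same underlying strategy as the paper's: isolate $M_{i_0}$ from the rest of $M$ by a $w^*$-open set and observe that the Szlenk derivation localizes to that set. The difference is in the packaging. The paper obtains the separating open set by applying part (i) to the subfamily $(M_j)_{j\neq i_0}$, so that $P=\bigcup_{j\neq i_0}M_j$ is $w^*$-compact and $V=X^*\setminus P$ is open, and then cites as well known the inductive fact that $s_\ee^\mu(N\cup P)\cap V\subset s_\ee^\mu(N)$ when $V$ is $w^*$-open and disjoint from $P$; you instead extract the separating neighborhood (your Local Structure Lemma) directly from the convergence hypothesis and carry out the transfinite induction explicitly, which is more self-contained at the cost of redoing work that (i) already encodes. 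Two small points: in the successor step, the deduction ``hence $V\cap s_\ee^\zeta(M)\subset s_\ee^\zeta(M_{i_0})$'' needs the extra remark that a nonzero element of $V\cap s_\ee^\zeta(M)$ lies in some $s_\ee^\zeta(M_j)$ and in $M_{i_0}$, forcing $j=i_0$; and in your boundedness argument for (i) you should note that a sequence witnessing unboundedness can be taken from distinct $M_{i_n}$'s because each $M_i$ is itself bounded. For (i) the paper argues directly that every net has a $w^*$-convergent subnet (via the one-point compactification of $I$), whereas you go through boundedness, $w^*$-closedness, and Banach--Alaoglu; both are valid. The middle ``consequently'' statement, which you do not spell out, follows from your last one with $\xi=\sup_{i\in I}Sz_\ee(M_i)$.
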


\begin{proof}$(i)$ If $I$ is finite, $(i)$ is trivial.  Assume $I$ is infinite and let $I_\infty=I\cup \{\infty\}$ denote the one-point compactification of $I$ with its discrete topology.  For any net $(x^*_\lambda)\subset M$, either $(x^*_\lambda)$ has a subnet which is constantly $0$, and which therefore converges $w^*$ to $0$, or $(x^*_\lambda)$ has a subnet (which we assume is the entire net) such that for all $\lambda$, $x^*_\lambda \neq 0$.  In this case, for each $\lambda$, there exists a unique  $i_\lambda\in I$ such that $x^*_\lambda\in M_{i_\lambda}$.   Then we may pass to a subnet and assume that $(i_\lambda)$ converges in $I_\infty$.   If $i_\lambda$ converges to $i\in I$, we may pass to a subnet once more and assume $(x^*_\lambda)\subset M_i$.  By $w^*$-compactness of $M_i$, $(x^*_\lambda)$ has a $w^*$-convergent subnet.  Otherwise $i_\lambda$ converges to $\infty$ in $I_\infty$, whence for any $i\in I$, $(x^*_\lambda)$ is eventually not in $M_i$.    From the hypothesis, $x^*_\lambda\underset{w^*}{\to}0$. Thus in any case, the original net $(x^*_\lambda)\subset M$ has a $w^*$-converging subnet.

$(ii)$ It is well-known and an easy proof by induction on $\mu$ that if $N,P$ are $w^*$-compact subsets of $X^*$ and $V$ is a $w^*$-open set such that $V\cap P=\varnothing$, then for any $\ee>0$ and $\mu>0$,   $s_\ee^\mu(N\cup P)\cap V\subset s_\ee^\mu(N)$.   Then for any $i\in I$, we may let $N=M_i$, $P=\cup_{i\neq j\in I}M_j$, and $V=X^*\setminus P$.  That $P$ is $w^*$-compact follows from applying $(i)$ to the collection $(M_j)_{j\in I\setminus\{i\}}$.   We deduce that $$s_\ee^\mu(N\cup P)\cap V \subset s^\mu_\ee(M_i)$$ for any $i\in I$.  If $f\in s_\ee^\mu(M)\setminus\{0\}$, then $f\in M_i\setminus \{0\}$ for a unique $i\in I$, whence $f\in s_\ee^\mu(M)\cap (X^*\setminus \underset{i\neq j\in I}{\bigcup} M_j)\subset s_\ee^\mu(M_i)$.  Thus it follows that $s^\mu_\ee(M)\setminus\{0\}\subset \cup_{i\in I}s_\ee^\mu(M_i)$.   This means that if $\xi=\sup_{i\in I} Sz_\ee(M_i)<\infty$, $s^\xi_\ee(M)\subset \{0\}$, and $Sz_\ee(M)\leqslant \xi+1$.    If $\xi\in \ord$ and $\ee>0$ are such that for every $i\in I$, $s_\ee^\xi(M_i)\subset \{0\}$, $s_\ee^\xi(M)\subset \cup_{i\in I}s_\ee^\xi(M_i)\subset \{0\}$, whence $Sz_\ee(M)\leqslant \xi+1$.

\end{proof}

We next define the Bourgain $\ell_1$ and $c_0$ indices, which originated in \cite{Bo}.  Given a Banach space $X$ and $K\geqslant 1$, we let $T_1(X,K)$ denote the tree in $X$ consisting of the empty sequence together with those sequences $(x_i)_{i=1}^n\subset X$ such that for all scalars $(a_i)_{i=1}^n$, $$K^{-1}\sum_{i=1}^n |a_i|\leqslant \|\sum_{i=1}^n a_ix_i\|\leqslant \sum_{i=1}^n |a_i|.$$  We let $I_1(X)=\sup_{K\geqslant 1}o(T_1(X,K))$.  We note that $I_1(X)=\infty$ if and only if $\ell_1$ embeds into $X$.  Similarly, we define $T_\infty(X,K)$ to be the empty sequence together with those sequences $(x_i)_{i=1}^n\subset X$ such that for all scalars $(a_i)_{i=1}^n$, $$\max_{1\leqslant i\leqslant n}|a_i|\leqslant \|\sum_{i=1}^n a_ix_i\|\leqslant K \max_{1\leqslant i\leqslant n}|a_i|.$$  We let $I_\infty(X)=\sup_{K\geqslant 1}o(T_\infty(X,K))$.  We note that $I_\infty(X)=\infty$ if and only if $c_0$ embeds into $X$.  

We collect the following facts.  

\begin{proposition}\begin{enumerate}[(i)]\item For any Banach space $X$, $I_1(X)\geqslant I_\infty(X^*)$. \item For any Banach space $X$, $I_1(X)=1+\dim X$ if $X$ has finite dimension, $I_1(X)=\infty$ if $\ell_1$ embeds into $X$, and otherwise there exists an ordinal $\xi>0$ such that $I_1(X)=\omega^\xi$.  The analogous statement holds for the $I_\infty$ index.  \item For any limit ordinal $\xi$ and any Banach space $X$, $Sz(X), I_1(X), I_\infty(X)\neq \omega^{\omega^\xi}$.   \item Suppose $X$ has a $1$-unconditional basis.  Then $I_1(X)\geqslant Sz(X)$ and if $Sz(X)\geqslant \omega^\omega$, $Sz(X)=I_1(X)$.    \end{enumerate}

\label{proximity}
\end{proposition}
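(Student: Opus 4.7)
The plan is to prove (i)--(iv) sequentially; part (iii) is the principal obstacle, being the ordinal-arithmetic extraction from $\epsilon$-Szlenk submultiplicativity for Banach spaces. For (i), I attach to each $t = (x_1^*,\dots,x_n^*) \in T_\infty(X^*, K)$ a biorthogonal family $(y_1^t,\dots,y_n^t) \subset X$ via Helly's theorem: the dual basis to $(x_i^*)$ on $\sspan(x_i^*)$ has norm at most $1$ thanks to the lower $c_0$ estimate $\max|a_i| \le \|\sum a_i x_i^*\|$, so for every $\delta > 0$ one can take $\|y_i^t\| \le 1 + \delta$ and $x_j^*(y_i^t) = \delta_{ij}$. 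Testing $\|\sum a_i y_i^t\|$ against $K^{-1}\sum\sgn(a_i)x_i^*$ (of norm at most $1$ by the upper $c_0$ estimate) shows $(y_i^t) \in T_1(X, K(1+\delta))$. A monotone, length-preserving map between these trees is assembled by transfinite induction on initial segments, and Lemma~\ref{monotone lemma} then yields $I_1(X) \ge I_\infty(X^*)$.

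For (ii), the finite-dimensional case follows from linear independence (bounding $o(T_1(X,K))$ by $\dim X + 1$) together with the existence of well-conditioned bases achieving this bound. The $\ell_1$-embedding case is a direct translation: an infinite branch of $T_1(X,K)$ is precisely a $K$-equivalent $\ell_1$-sequence, so $I_1(X) = \infty$ iff $\ell_1 \hookrightarrow X$. The remaining case reduces to showing $I_1(X)$ is a non-zero gamma number, which I would obtain from a concatenation lemma: given trees of orders $\zeta_1, \zeta_2$ in $T_1(X,K_1), T_1(X,K_2)$, appending the second beneath every node of the first and perturbing by Bessaga--Pe\l czy\'{n}ski to near-disjointness from the span of initial vectors (available in infinite dimension) produces a tree of order at least $\zeta_1 + \zeta_2$ in $T_1(X, K')$ for suitable $K'$. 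The $I_\infty$ analogue is parallel.

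Part (iii) is the main step. The input is submultiplicativity of the Banach-space $\epsilon$-Szlenk index, $Sz_{\epsilon_1 \epsilon_2}(X) \le Sz_{\epsilon_1}(X) \cdot Sz_{\epsilon_2}(X)$, which iterates to $Sz(X) \le Sz_\epsilon(X)^\omega$ for every $\epsilon > 0$. Suppose for contradiction $Sz(X) = \omega^{\omega^\zeta}$ with $\zeta$ a limit. For each $\epsilon$, write $Sz_\epsilon(X)$ in Cantor normal form with leading exponent $\gamma_\epsilon < \omega^\zeta$ and let $\mu_\epsilon < \zeta$ denote the leading exponent of $\gamma_\epsilon$. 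A direct ordinal-arithmetic computation yields $Sz_\epsilon(X)^\omega = \omega^{\gamma_\epsilon \omega} = \omega^{\omega^{\mu_\epsilon + 1}}$. Combined with $Sz(X) \le Sz_\epsilon(X)^\omega$ this forces $\zeta \le \mu_\epsilon + 1$, but $\mu_\epsilon < \zeta$ together with $\zeta$ being a limit gives $\mu_\epsilon + 1 < \zeta$, a contradiction. The $I_1$ and $I_\infty$ statements follow from the corresponding submultiplicativities $o(T_1(X, K_1 K_2)) \ge o(T_1(X, K_1)) \cdot o(T_1(X, K_2))$ and its $c_0$-analogue, proved by concatenating trees via Bessaga--Pe\l czy\'{n}ski-style constructions, through the identical ordinal-arithmetic calculation. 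The hardest piece is the leading-exponent formula $\beta^\omega = \omega^{\gamma \omega}$.

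For (iv), using Lemma~\ref{unconditionalization} replace $B_{X^*}$ by $|B_{X^*}|$, which has the same Szlenk index. Each derivation step on $|B_{X^*}|$ produces positive functionals whose difference has norm exceeding $\epsilon/2$; extracting via the lattice structure and iterating yields sequences of pairwise essentially disjointly supported positive functionals of bounded-below norm, whose biorthogonals in $X$ are $K$-equivalent to an $\ell_1$-basis. Recording these along the derivation hierarchy gives $o(T_1(X, K)) \ge Sz(X)$ and hence $I_1(X) \ge Sz(X)$. For the reverse inequality when $Sz(X) \ge \omega^\omega$, an $\ell_1$-tree of order $\zeta$ in $X$ supplies, through the biorthogonal functionals available for an unconditional basis, a $c_0$-tree of comparable order in $X^*$; this in turn forces a Szlenk derivation on $B_{X^*}$ of comparable length, and the additive or multiplicative loss incurred in this passage is absorbed using the gamma-number structure of $Sz(X)$ combined with part (iii) to exclude the intermediate values.
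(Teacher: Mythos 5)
The paper itself offers no proof of this proposition; it is quoted from \cite{Causey1}, \cite{Causey}, \cite{JuddOdell}, and \cite{AJO}, so your attempt has to be measured against those arguments. Your part (i) (Helly's theorem to dualize the lower $c_0$-estimate into biorthogonal $\ell_1$-sequences) is the right idea, though note the assignment $t\mapsto (y_i^t)$ is not monotone since the biorthogonals must be re-chosen at each extension, so the "assembly" is genuinely an induction on derived trees and not a single map. Your part (ii) is the standard route, and your Szlenk portion of (iii) is correct: Lancien's submultiplicativity $Sz_{\ee\ee'}(X)\leqslant Sz_\ee(X)Sz_{\ee'}(X)$, the observation that each $Sz_\ee(X)$ is a successor and hence strictly below the limit ordinal $\omega^{\omega^\zeta}$, and the identity $\beta^\omega=\omega^{\gamma\omega}$ for $\omega^\gamma\leqslant \beta<\omega^{\gamma+1}$ together give the contradiction exactly as you describe.

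There are, however, two genuine gaps. First, for the $I_1$ and $I_\infty$ portions of (iii), the inequality you invoke, $o(T_1(X,K_1K_2))\geqslant o(T_1(X,K_1))\cdot o(T_1(X,K_2))$, points the wrong way. It yields only $I_1(X)\geqslant \sup_K o(T_1(X,K))^\omega$, which is perfectly consistent with $I_1(X)=\omega^{\omega^\zeta}$ for $\zeta$ a limit: take $o(T_1(X,K))=\omega^{\omega^{\mu_K}}+1$ with $\mu_K\uparrow \zeta$, so that $\sup_K o(T_1(X,K))^\omega=\sup_K\omega^{\omega^{\mu_K+1}}=\omega^{\omega^\zeta}$ with no contradiction. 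To mimic the Szlenk argument you need the reverse estimate --- an upper bound on the order of the tree with the \emph{worse} constant in terms of orders of trees with better constants --- and that is the substantial block-subtree extraction theorem of Judd--Odell (extended to the nonseparable case in \cite{Causey1}); concatenation constructions can only ever produce lower bounds and cannot supply it. Second, in (iv), the claim that biorthogonals of essentially disjointly supported, norm-bounded-below positive functionals are $K$-equivalent to the $\ell_1$-basis is false: in $\ell_2$ the coordinate functionals are disjointly supported and normalized, yet their biorthogonals span $\ell_2$. So $I_1(X)\geqslant Sz(X)$ cannot be read off the Szlenk derivation in the way you describe; the known proofs pass through weakly null $\ell_1^\xi$-spreading-model indices (or, via (i), through $I_\infty(X^*)$), and the reverse inequality when $Sz(X)\geqslant\omega^\omega$ again rests on the Judd--Odell-type extraction rather than on absorbing losses by gamma-number arithmetic.
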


Items $(i)$, $(ii)$, and the portion of $(iii)$ dealing with $I_1$ and $I_\infty$  appeared in \cite{Causey1}.  The remaining items appeared in \cite{Causey}.   Items $(ii)$ and the portion of $(iii)$ dealing with $I_1$ and $I_\infty$ appeared for separable Banach spaces in \cite{JuddOdell}, while the remaining portion of $(iii)$ and item $(iv)$ appeared for separable Banach spaces in \cite{AJO}.

We remark here that for any sequence $(\xi_n)$ of ordinals, there exists a sequence of disjoint $B$-trees $(B_n)$ such that, with $T_n=B_n\cup \{\varnothing\}$,  $o(T_n)=\xi_n+1$ and such that $T_n$ is full.  Indeed, we may let $B_n$ denote those non-empty sequences $(n, \gamma_i, k_i)_{i=1}^p$ such that $\gamma_i\in [0, \xi_n)$, $\gamma_p<\ldots <\gamma_1$, and $k_i\in \nn$.  It is trivial to check by inducion that for any $\zeta$, $B_n^\zeta$ is the collection of all non-empty sequences $(n, \gamma_i, k_i)_{i=1}^p\in B_n$ such that $\gamma_p\geqslant \zeta$. This yields the correct order of $T_n$.   The inclusion of the $k_i$ terms ensures that $T_n$ is full.

We say that a subset $C$ of a tree $T$ is \emph{segment complete} if for every $s\preceq t$, $s,t\in C$, $\{u: s\preceq u\preceq t\}\subset C$.

Fix $\theta\in (0,1)$.  Fix an ordinal $\alpha$ and let $\xi=\omega^{\omega^\alpha}$.   Let $(B_n)$ be a sequence of disjoint $B$-trees such that with $T_n=B_n\cup \{\varnothing\}$, $o(T_n)=\xi^n+1$ and $T_n$ is full.  Let $B=\cup_{n=1}^\infty B_n$.    For each $n\in \nn$, $1\leqslant k\leqslant n$, and $\gamma\in [0, \xi^{n-k})$, let $$G_{n,k,\gamma}= B_n^{\xi^k \gamma}\setminus B_n^{\xi^k (\gamma+1)}.$$ Note that $G_{n, k, \gamma}$ is segment complete and that $(G_{n,k, \gamma})_{\gamma<\xi^{n-k}}$ partitions $B_n$.  Let $c_{00}(B)$ denote the span of the canonical Hamel basis $\{e_t: t\in B\}$ and let $e_t^*$ be the coordinate functional to $e_t$.   For each $n\in \nn$, $1\leqslant k\leqslant n$, and $\gamma\in [0, \xi^{n-k})$, let $$K_{n, k, \gamma}=\{0\}\cup \Bigl\{\theta^{k-1}\sum_{G_{n,k,\gamma}\ni s\preceq t} \ee_s e_s^*:\ee_s\in S_\mathbb{F}, t\in G_{n,k, \gamma}\Bigr\},$$ $$L_{n,k, \gamma}=|K_{n, k, \gamma}|,$$ $$K=\underset{n,k,\gamma}{\bigcup}K_{n, k, \gamma},\hspace{5mm} L=|K|.$$   Note that the basis $(e_t: t\in B)$ becomes a normalized, $1$-unconditional basis for the completion $\mathfrak{G}$ of $c_{00}(B)$ under the norm $\|x\|=\sup_{x^*\in K}|x^*(x)|$.    Note also that the norming set $K$ is contained in the unit ball $B_{\mathfrak{G}^*}$.   Furthermore, since for any $x\in \mathfrak{G}$, $$\max_{x^*\in B_{\mathfrak{G}^*}}\text{Re\ }x^*(x)=\|x\|=\max_{x^*\in K}\text{Re\ }x^*(x),$$ $B_{\mathfrak{G}^*}=\overline{\text{co}}^{w^*}(K)$ by Theorem \ref{hb}.

\begin{lemma}\begin{enumerate}[(i)]\item The collection $I=\{L_{n, k, \gamma}: n\in \nn, 1\leqslant k\leqslant n, \gamma<\xi^{n-k}\}$ satisfies the hypotheses of Claim \ref{trivial claim}.\item For any $\ee>0$, any $n\in \nn$, any $1\leqslant k\leqslant n$, and any $\gamma<\xi^{n-k}$, $Sz_\ee(L_{n, k, \gamma}) \leqslant \xi^k+1$.  \item Suppose $j\in \nn$ is such that  $\ee>2 \theta^j$. Then for any $k,n\in \nn$ such that  $j< k\leqslant n$ and for any ordinal $\gamma<\xi^{n-k}$, $Sz_\ee(L_{n, k, \gamma}) \leqslant \xi^j+1$. \item For any $n,j\in \nn$, $1\leqslant k\leqslant n$,  $\gamma<\xi^{n-k}$, and $\ee>2\theta^j$, $Sz_\ee(L_{n, k, \gamma})\leqslant \xi^j+1$.  \item If $0<\ee<\theta^{n-1}$, $Sz_\ee(L_{n, n, 0})=\xi^n+1$.

\end{enumerate}
\label{ebola}
\end{lemma}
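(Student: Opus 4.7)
The common thread across all five parts is that each nonzero element of $L_{n,k,\gamma}$ is of the form $f_t := \theta^{k-1}\sum_{G_{n,k,\gamma}\ni s\preceq t} e_s^*$, injectively indexed by $t\in G_{n,k,\gamma}$, and the map $t\mapsto f_t$ transfers the coarse-wedge structure of $G_{n,k,\gamma}$ to $w^*$-approximation in $\mathfrak{G}^*$. I will invoke Proposition \ref{szlenk facts}(vii)--(viii) repeatedly to translate between Szlenk derivatives and $w^*$-limits with controlled distances. For (i) I check the hypotheses of Claim \ref{trivial claim}: $L_{n,k,\gamma}\cap L_{n',k',\gamma'}=\{0\}$ because different $n$'s have disjoint supports inside $B=\bigcup_m B_m$, different $k$'s give incompatible coefficient magnitudes $\theta^{k-1}\neq\theta^{k'-1}$, and fixed $(n,k)$ with different $\gamma$ uses the partition of $B_n$; the convergence hypothesis follows since each basis vector $e_t$ lies in the support of functionals from only finitely many triples (the $n$ is unique and $k\in\{1,\ldots,n\}$), so a net eventually outside each $L_{n,k,\gamma}\setminus\{0\}$ has every coordinate eventually zero.

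For (ii) I prove by transfinite induction on $\zeta$ that
\[
s^\zeta_\ee(L_{n,k,\gamma})\subseteq \{f_t : t\in G_{n,k,\gamma}\cap T_n^{\xi^k\gamma+\zeta}\}\cup\{0\}.
\]
The base and limit cases are routine given injectivity of $t\mapsto f_t$. For the successor step, a nonzero $f_t\in s^{\zeta+1}_\ee(L_{n,k,\gamma})$ is, by Proposition \ref{szlenk facts}(vii), a $w^*$-limit of a net $(f_{t_\lambda})\subseteq s^\zeta_\ee(L_{n,k,\gamma})$ with $\|f_{t_\lambda}-f_t\|>\ee/2$; a coordinate-wise analysis of the $w^*$-convergence forces the $t_\lambda$ to eventually strictly extend $t$, and their immediate successors of $t$ must take infinitely many distinct values inside $T_n^{\xi^k\gamma+\zeta}$, placing $t$ in $T_n^{\xi^k\gamma+\zeta+1}$. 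At $\zeta=\xi^k$, $G_{n,k,\gamma}\cap T_n^{\xi^k(\gamma+1)}=\varnothing$ by definition, and the inclusion gives $Sz_\ee(L_{n,k,\gamma})\leqslant \xi^k+1$.

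The main obstacle is (iii), where the improved bound requires finer norm estimates in $\mathfrak{G}^*$. Using the auxiliary functionals in $K_{n,k',\gamma'}$ for $1\leqslant k'<k$ as LP-duality certificates, the segment difference $f_{t_2}-f_{t_1}=\theta^{k-1}\sum_{G_{n,k,\gamma}\ni u:t_1\prec u\preceq t_2}e_u^*$ is bounded by $\theta^{k-k'}M_{k'}$, where $M_{k'}$ counts how many of the partition blocks $G_{n,k',\gamma'}$ meet the segment. Choosing $k'=j+1$, the hypothesis $\ee>2\theta^j$ forces $M_{j+1}$, and in turn the rank-range of the segment inside $B_n$, to exceed $\xi^{k-j}$. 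A strengthened induction then yields
\[
s^\zeta_\ee(L_{n,k,\gamma})\subseteq \{f_t : t\in G_{n,k,\gamma}\cap T_n^{\xi^k\gamma+\xi^{k-j}\zeta}\}\cup\{0\},
\]
and plugging in $\zeta=\xi^j$ lands in $T_n^{\xi^k(\gamma+1)}$, giving $Sz_\ee(L_{n,k,\gamma})\leqslant \xi^j+1$. Item (iv) is immediate: apply (ii) if $j\geqslant k$ and (iii) if $j<k$.

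For (v), the upper bound comes from (ii) with $k=n$, $\gamma=0$ (so $G_{n,n,0}=B_n$). For the matching lower bound I prove by induction on $\zeta\leqslant \xi^n$ that $\{f_t : t\in B_n\cap T_n^\zeta\}\cup\{0\}\subseteq s^\zeta_\ee(L_{n,n,0})$. Fullness of $T_n$ supplies, for every $t\in T_n^{\zeta+1}$, an infinite family of distinct immediate successors $s_\lambda\in T_n^\zeta$; the corresponding $f_{s_\lambda}\to f_t$ in $w^*$ with $\|f_{s_\lambda}-f_t\|=\theta^{n-1}\|e_{s_\lambda}^*\|=\theta^{n-1}>\ee$, so Proposition \ref{szlenk facts}(viii) puts $f_t\in s^{\zeta+1}_\ee(L_{n,n,0})$. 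The persistence of $0$ at every stage uses a spreading net of length-one elements in $B_n\cap T_n^\zeta$ whose $f_{s_\lambda}$'s converge $w^*$ to $0$ with $\|f_{s_\lambda}\|\geqslant \theta^{n-1}>\ee$. Since $\xi^n=\omega^{\omega^\alpha n}$ is a limit ordinal, $0\in s^{\xi^n}_\ee(L_{n,n,0})$, whence $Sz_\ee(L_{n,n,0})\geqslant \xi^n+1$.
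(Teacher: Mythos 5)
Your overall architecture is the paper's: the inductive inclusion $s_\ee^\zeta(L_{n,k,\gamma})\subset\{0\}\cup\{\theta^{k-1}\sum_{G_{n,k,\gamma}\ni s\preceq t}e_s^*: t\in G_{n,k,\gamma}\cap T_n^{\xi^k\gamma+\zeta}\}$ for (ii), its strengthening with exponent $\xi^k\gamma+\xi^{k-j}\zeta$ for (iii), and the fullness argument for the lower bound in (v) all match, and (ii), (iv), (v) are essentially correct. But the key quantitative step in (iii) is wrong as stated. Your upper bound $\|f_{t_\lambda}-f_t\|\leqslant\theta^{k-k'}M_{k'}$ (obtained by splitting the segment into $M_{k'}$ pieces, each a suppression projection of an element of $L_{n,k',\cdot}\subset B_{\mathfrak{G}^*}$) is fine, but the choice $k'=j+1$ does not do what you claim. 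Comparing it with the lower bound $\|f_{t_\lambda}-f_t\|>\ee/2>\theta^j$ gives only $M_{j+1}>\theta^{2j+1-k}$, which is vacuous whenever $j<k\leqslant 2j$ (e.g.\ $j=1$, $k=2$, a case the lemma must cover), and even when $k>2j+1$ it forces $M_{j+1}$ to exceed only a fixed finite number, hence the segment to span a rank range of at most $(\text{finite})\cdot\xi^{j+1}<\xi^{k-j}$. The assertion that $M_{j+1}$ ``exceeds $\xi^{k-j}$'' cannot be right in any case, since $M_{j+1}$ is a finite count and $\xi^{k-j}$ is an infinite ordinal. The correct choice is $k'=k-j$: then $\theta^{k-1}=\theta^j\cdot\theta^{(k-j)-1}$, so a segment contained in a \emph{single} block $G_{n,k-j,\delta}$ gives $\|f_{t_\lambda}-f_t\|\leqslant\theta^j$, and the separation $>\theta^j$ forces $M_{k-j}\geqslant 2$. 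Combined with the ordinal identity $\xi^k\gamma+\xi^{k-j}\zeta=\xi^{k-j}(\xi^j\gamma+\zeta)$ and segment-completeness of the blocks, this is exactly what pushes $t$ from $T_n^{\xi^k\gamma+\xi^{k-j}\zeta}$ into $T_n^{\xi^k\gamma+\xi^{k-j}(\zeta+1)}$: if $t$ were not in the latter set, then neither would its extension $t_\lambda$ be, so $t$ and $t_\lambda$ would both lie in the single block $G_{n,k-j,\xi^j\gamma+\zeta}$, contradicting $M_{k-j}\geqslant 2$.

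A secondary omission: in (i) you verify the pairwise-intersection and net-convergence hypotheses of Claim \ref{trivial claim} but never check that each $L_{n,k,\gamma}$ is $w^*$-compact, which is also one of its hypotheses and is not automatic (the paper establishes it by exhibiting a homeomorphism between $L_{n,k,\gamma}$ and a compact quotient of $T_n^{\xi^k\gamma}$ obtained by collapsing $T_n^{\xi^k(\gamma+1)}$ to a point). This needs to be supplied for (i) to be complete.
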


\begin{proof} We recall that all trees are endowed with the coarse wedge topology.

$(i)$ Note that if $x^*\in L_{n,k, \gamma}$, $\supp(x^*)\subset G_{n, k, \gamma}$.  For any $s\in B$, there is a finite subset $F$ of $I$ such that if $x^*\in L\setminus \cup_{A\in F}A$, $x^*(e_s)=0$.  Indeed, if $s\in B_n$, then for each $1\leqslant k\leqslant n$, there exists a unique $\gamma_k$ such that $s\in G_{n,k,\gamma_k}$, and $F=\{L_{n,k,\gamma_k}:1\leqslant k\leqslant n\}$ has this property.  Therefore if $(x^*_\lambda)$ is any net in $L$ is such that for any $P\in I$, $(x^*_\lambda)$ is eventually in $\cup I\setminus (P\setminus\{0\})$, for any $s\in B$, $x^*_\lambda(e_s)\to 0$, whence $(x^*_\lambda)$ is $w^*$-null.  Next, suppose $(m,k,\gamma)\neq (n, l, \delta)$, $x^*\in L_{m,k, \gamma}\setminus\{0\}$, and $y^*\in L_{n,l, \delta}\setminus\{0\}$.  If $m\neq n$, $\supp(x^*)\cap \supp(y^*)\subset G_{m,k, \gamma}\cap G_{n, l, \delta}=\varnothing$ and $x^*\neq y^*$.  If $m=n$ and $k\neq l$, $$\{x^*(e_s):s\in B\}=\{0, \theta^{k-1}\}\neq \{0, \theta^{l-1}\}=\{y^*(e_s): s\in B\},$$ and $x^*\neq y^*$.  If $m=n$, $k=l$, and $\gamma\neq \delta$, then $\supp(x^*)\cap \supp(y^*)\subset G_{m,k, \gamma}\cap G_{m, k, \delta}=\varnothing$.  Thus $L_{m, k, \gamma}\cap L_{n, l, \delta}=\{0\}$.  It remains to show that $L_{n, k,\gamma}$ is $w^*$-compact.   Consider the following equivalence relation $\approx$ on $T_n^{\xi^ k\gamma}$.   We let $s\approx t$ if $s=t$ or if $s,t\in T_n^{\xi^k(\gamma+1)}$.   Let $q:T_n^{\xi^k \gamma}\to Q=T_n^{\xi^k \gamma}/\approx$ denote the quotient map sending a member of $T^{\xi^k \gamma}_n$ to its equivalence class.  Endow $Q$ with the quotient topology induced by this map, noting that $Q$ is compact.   Note that $T^{\xi^k \gamma}_n\setminus T^{\xi^k (\gamma+1)}_n=G_{n,k, \gamma}$, and the equivalence classes of $Q$ are $T_n^{\xi^k (\gamma+1)}$ and $\{t\}$, $t\in G_{n,k, \gamma}$.  We define the map $\varphi:Q\to L_{n,k, \gamma}$ by $\varphi(T_n^{\xi^k (\gamma+1)})=0$ and $\varphi(\{t\})=\theta^{k-1}\sum_{G_{n,k, \gamma}\ni s\preceq t} e_s^*$ for $t\in G_{n,k, \gamma}$.  It is straightforward to check that this is a bijection, and we need only to show that $\varphi$ is continuous to see that it is a homeomorphism.    Note that for any $\tau\in Q$, $\supp(\varphi(\tau))\subset G_{n,k, \gamma}$, thus we only need to show that for any net $(\tau_\lambda)\subset Q$ converging to $\tau\in Q$ and for any $s\in G_{n,k, \gamma}$, $\varphi(\tau_\lambda)(e_s)\to \varphi(\tau)(e_s)$. For every $\lambda$, fix $t_\lambda\in \tau_\lambda$ and fix $t\in \tau$.    Note that if $t\in G_{n,k, \gamma}$, then $t_\lambda\to t$ in the topology of $T^{\xi^k \gamma}_n$, and if $t\in T^{\xi^k \gamma}_n$, then for any $s\in G_{n, k, \gamma}$, eventually $s\not\preceq t_\lambda$.   In the case that $t\in G_{n,k, \gamma}$, for any $s\in G_{n,k, \gamma}$, $\varphi(\tau_\lambda)(e_s)\to \theta^{k-1}=\varphi(\tau)(e_s)$ if $s\preceq t$ and $\varphi(\tau_\lambda)(e_s)\to 0=\varphi(\tau)(e_s)$ if $s\not\preceq t$.  If $t\in T^{\xi^k (\gamma+1)}_n$, then $\varphi(\tau_\lambda)(e_s)\to 0=\varphi(\tau)(e_s)$ for every $s\in G_{n,k, \gamma}$.

$(ii)$ We claim that for any $n\in \nn$, $1\leqslant k\leqslant n$,  $\gamma<\xi^{n-k}$, $\ee>0$, and for any $\zeta\leqslant \xi^k$, $$s_\ee^\zeta(L_{n, k, \gamma})\subset \{0\}\cup \Bigl\{\theta^{k-1} \sum_{G_{n, k, \gamma}\ni s\preceq t} e_s^*: t\in G_{n, k, \gamma}\cap T_n^{\xi^k\gamma+\zeta}\Bigr\}.$$   Since $G_{n, k, \gamma}\cap T_n^{\xi^k (\gamma+1)}=\varnothing$, setting $\zeta=\xi^k$ yields that $s^{\xi^k}_\ee(L_{n, k, \gamma})\subset \{0\}$, and $Sz_\ee(L_{n, k, \gamma})\leqslant \xi^k+1$ as desired.   The base case and limit ordinal cases are clear.  Assume that $t_\lambda, t\in G_{n, k, \gamma}\cap T_n^{\xi^k\gamma+\zeta}$, $\theta^{k-1}\sum_{G_{n,k, \gamma}\ni s\preceq t_\lambda}e_s^*\neq \theta^{k-1}\sum_{G_{n, k, \gamma}\ni s\preceq t} e_s^*$ and $$\theta^{k-1}\sum_{G_{n,k}\ni s\preceq t_\lambda} e_s^*\underset{w^*}{\to}\theta^{k-1}\sum_{G_{n, k, \gamma}\ni s\preceq t} e_s^*.$$  It follows that eventually $t\prec t_\lambda$, whence $t\in G_{n, k, \gamma}\cap T_n^{\xi^k\gamma + \zeta+1}$, as desired.   

$(iii)$ Fix any $n,j\in \nn$, $j< k\leqslant n$, and suppose that $\ee>2\theta^j$.  We claim that for any $\zeta$, \begin{equation} s_\ee^\zeta(L_{n, k, \gamma})\subset \{0\}\cup \Bigl\{\theta^{k-1}\sum_{G_{n, k, \gamma}\ni s\preceq t} e_s^*: t\in G_{n, k, \gamma}\cap T_n^{\xi^k \gamma+\xi^{k-j}\zeta}\Bigr\}=:A_\zeta.\tag{1}\end{equation} Note that a representation of a functional as $\theta^{k-1}\sum_{G_{n,k, \gamma}\ni s\preceq t} e_s^*$ for $t\in G_{n, k, \gamma}\cap T_n^{\xi^k\gamma+\xi^{k-j}\zeta}$ is unique, and the set $G_{n, k, \gamma}\cap T_n^{\xi^k\gamma+\xi^{k-j}\zeta}$ is segment complete.    The base and limit ordinal cases are clear.  To obtain a contradiction, suppose $s_\ee^\zeta(L_{n, k, \gamma})\subset A_\zeta$ while $x^*\in s_\ee^{\zeta+1}(L_{n, k, \gamma})\setminus A_{\zeta+1}$.  By the inductive hypothesis, there exists $t\in G_{n, k, \gamma}\cap T_n^{\xi^k\gamma+\xi^{k-j}\zeta}$ such that $x^*=\theta^{k-1}\sum_{G_{n, k, \gamma}\ni s\preceq t}e_s^*$. Since $x^*\in s_\ee^{\zeta+1}(L_{n, k, \gamma})$, there exists a net $(x^*_\lambda)\subset s_\ee^\zeta(L_{n, k, \gamma})$ converging $w^*$ to $x^*$ and such that $\|x^*_\lambda-x^*\|>\ee/2>\theta^j$ for all $\lambda$.  By the inductive hypothesis, for each $\lambda$, there exists a unique $t_\lambda\in G_{n, k, \gamma}\cap T_n^{\xi^k\gamma + \xi^{k-j}\zeta}$ such that $x^*_\lambda=\theta^{k-1}\sum_{G_{n, k, \gamma}\ni s\preceq t_\lambda} e_s^*$.  It follows that $t\prec t_\lambda$ eventually, and we may assume $t\prec t_\lambda$ for all $\lambda$.   Since we have assumed $x^*\notin A_{\zeta+1}$, $t\notin T_n^{\xi^k\gamma+\xi^{k-j}(\zeta+1)}$.  Since $t\prec t_\lambda$, $t_\lambda\notin T_n^{\xi^k\gamma+\xi^{k-j}(\zeta+1)}$.  However, since $x^*, x^*_\lambda\in A_\zeta$, $t, t_\lambda\in T_n^{\omega^k \gamma+\xi^{k-j}\zeta}$, and therefore $t, t_\lambda\in T_n^{\xi^k\gamma+\xi^{k-j}\zeta}\setminus T_n^{\xi^k\gamma+\xi^{k-j}(\zeta+1)}$.  Since $$\xi^k\gamma + \xi^{k-j}\zeta= \xi^{k-j}\xi^j \gamma+\xi^{k-j}\zeta= \xi^{k-j}(\xi^j\gamma+\zeta)$$ and $$\xi^k\gamma+\xi^{k-j}(\zeta+1) = \xi^{k-j}(\xi^j\gamma+\zeta+1),$$ it follows that $$t, t_\lambda\in T_n^{\xi^k\gamma+\xi^{k-j}\zeta}\setminus T_n^{\xi^k\gamma+\xi^{k-j}(\zeta+1)} = T_n^{\xi^{k-j}(\xi^j\gamma+\zeta)}\setminus T_n^{\xi^{k-j}(\xi^j\gamma+\zeta+1)}= G_{n,\hspace{1mm} k-j, \hspace{1mm}\xi^j\gamma+\zeta}.$$  Fix any $\lambda$.   The set $\sigma=\{s: t\prec s\preceq t_\lambda\}$ is a subset of $\sigma_1:=\{s\in G_{n, k-j, \xi^j \gamma+\zeta}: s\preceq t_\lambda\}$, and $$x^*_\lambda-x^*= \theta^{k-1}\sum_{s\in \sigma}e_s^* = \theta^j\theta^{k-j-1}\sum_{s\in \sigma}e_s^*.$$  Since $\theta^{k-j-1}\sum_{s\in \sigma}e_s^*$ is a suppression projection of $\theta^{k-j-1}\sum_{s\in \sigma_1}e_s^*\in L_{n, k-j, \xi^j\gamma+\zeta}\subset B_{\mathfrak{G}^*}$, it follows that $$\theta^j<\|x^*_\lambda-x^*\| = \theta^j \|\theta^{k-j-1}\sum_{s\in \sigma} e_s^*\|\leqslant \theta^j \|\theta^{k-j-1}\sum_{s\in \sigma_1} e_s^*\|\leqslant\theta^j.$$   This contradiction yields successor case, and therefore completes the proof of $(1)$.  This claim easily yields $(iii)$.

$(iv)$ Fix any $n\in \nn$, $1\leqslant k\leqslant n$, and $\gamma<\xi^{n-k}$. Suppose that $\ee>2\theta^j$.  Then the result follows by $(ii)$ if $k\leqslant j$ and by $(iii)$ if $j<k$.   

$(v)$ Note that $$L_{n, n, 0}=\{0\}\cup \Bigl\{\theta^{n-1}\sum_{s\preceq t} e^*_s:  t\in B_n\Bigr\}.$$ Given $t\in T_n$, let $\varphi(t)=0$ if $t=\varnothing$ and $\varphi(t)=\theta^{n-1}\sum_{s\preceq t}e_s^*$ if $t\in B_n$.   As in $(i)$, this is a homeomorphism from $T_n$ to $L_{n,n,0}$ with its $w^*$-topology.     Since $T_n$ is full and has order $\xi^n+1$, and since $L_{n,n,0}$ is homeomorphic to $T_n$, the Cantor-Bendixson index of $L_{n,n,0}$ is $\xi^n+1$.  Thus for any $\ee>0$, $Sz_\ee(L_{n, n, 0})\leqslant \xi^n+1$, since $Sz_\ee(\cdot)$ can obviously not exceed the Cantor-Bendixson index of a set.  We claim that if $0<\ee<\theta^{n-1}$, $\varphi(T_n^\zeta)\subset s_\ee^\zeta(L_{n,n,0})$ for any $\zeta$.   Here, $T_n^\zeta$ can refer to either the $\zeta^{th}$ derived tree or the $\zeta^{th}$ Cantor-Bendixson derivative, since they coincide because $T_n$ is full.   The proof is by induction on $\zeta$, where the base and limit cases are trivial.  Suppose $t\in T_n^{\zeta+1}$.  Since $T_n$ is full, $t$ is the limit of some net $(t_\lambda)\subset T_n^\zeta$, where $t_\lambda^-=t$.  Then $\varphi(t_\lambda)\underset{w^*}{\to}\varphi(t)$, while $$\|\varphi(t_\lambda)-\varphi(t)\|= \theta^{n-1}\|e_{t_\lambda}^*\|=\theta^{n-1}>\ee.$$ Since $(\varphi(t_\lambda))\subset s_\ee^\zeta(L_{n,n,0})$, we deduce that $\varphi(t)\in s_\ee^{\zeta+1}(L_{n,n,0})$.   This gives the inclusion, and since $o(T_n)=\xi^n+1$, $Sz_\ee(L_{n,n,0})= \xi^n+1$.

\end{proof}

\begin{corollary} $Sz(\mathfrak{G})=I_1(\mathfrak{G})=I_\infty(\mathfrak{G}^*)=\omega^{\omega^{\alpha+1}}$.   

\label{ebola corollary}
\end{corollary}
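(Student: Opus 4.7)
The strategy is to first compute $Sz(\mathfrak{G})$, then extract $I_1(\mathfrak{G})$ from Proposition \ref{proximity}(iv), and finally sandwich $I_\infty(\mathfrak{G}^*)$ between the general bound of Proposition \ref{proximity}(i) and a direct construction of $c_0$-trees built from the biorthogonal functionals along chains in the $B_n$.

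For $Sz(\mathfrak{G})$, I will estimate $Sz(L)$ and then transfer to $Sz(\mathfrak{G})$ using Lemma \ref{unconditionalization} and Theorem \ref{theorem}. For the upper bound on $Sz(L)$, Lemma \ref{ebola}(i) verifies the hypotheses of Claim \ref{trivial claim} for the family $\{L_{n,k,\gamma}\}$, while Lemma \ref{ebola}(iv) gives $s^{\xi^j+1}_\ee(L_{n,k,\gamma}) = \varnothing$ whenever $\ee > 2\theta^j$; Claim \ref{trivial claim}(ii) then yields $Sz_\ee(L) \leq \xi^j + 2$ for such $\ee$, and letting $j\to\infty$ produces $Sz(L) \leq \sup_j \xi^j = \omega^{\omega^{\alpha+1}}$. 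For the matching lower bound, Lemma \ref{ebola}(v) combined with Proposition \ref{szlenk facts}(iii) gives $Sz(L) \geq Sz(L_{n,n,0}) \geq \xi^n + 1$ for every $n$, hence $Sz(L) \geq \omega^{\omega^{\alpha+1}}$. Since $\omega^{\omega^{\alpha+1}}$ is itself a gamma number, Theorem \ref{theorem} applied to $B_{\mathfrak{G}^*} = \overline{\text{co}}^{w^*}(K)$ delivers $Sz(\mathfrak{G}) = \Gamma(\omega^{\omega^{\alpha+1}}) = \omega^{\omega^{\alpha+1}}$.

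Because $\mathfrak{G}$ has a $1$-unconditional basis and $Sz(\mathfrak{G}) \geq \omega^\omega$, Proposition \ref{proximity}(iv) immediately gives $I_1(\mathfrak{G}) = Sz(\mathfrak{G}) = \omega^{\omega^{\alpha+1}}$, and Proposition \ref{proximity}(i) then hands over the upper bound $I_\infty(\mathfrak{G}^*) \leq I_1(\mathfrak{G}) = \omega^{\omega^{\alpha+1}}$. The only remaining step is the matching lower bound on $I_\infty(\mathfrak{G}^*)$, which I will prove by exhibiting, for each $n$, a monotone, length-preserving embedding $\Phi_n : T_n \to T_\infty(\mathfrak{G}^*, \theta^{-(n-1)})$ given by $\Phi_n(t) = (e^*_{t|_1}, \ldots, e^*_{t|_{|t|}})$. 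Granting this, Lemma \ref{monotone lemma} gives $o(T_\infty(\mathfrak{G}^*, \theta^{-(n-1)})) \geq o(T_n) = \xi^n + 1$, and the supremum over $n$ yields $I_\infty(\mathfrak{G}^*) \geq \omega^{\omega^{\alpha+1}}$.

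The real content of the last step, and the main technical obstacle, is verifying the $c_0$-inequality with constant $\theta^{-(n-1)}$ for each chain $t_1 \prec \cdots \prec t_m$ in $B_n$. The lower inequality $\max_i |a_i| \leq \|\sum_i a_i e^*_{t_i}\|$ is immediate from the $1$-unconditionality of $(e^*_t)$ in $\mathfrak{G}^*$. For the upper inequality, I will use that $\theta^{n-1}\sum_{s \preceq t_m} e^*_s \in K_{n,n,0} \subset B_{\mathfrak{G}^*}$, so $\|\sum_{s \preceq t_m} e^*_s\| \leq \theta^{-(n-1)}$; $1$-unconditionality then dominates $\|\sum_i a_i e^*_{t_i}\|$ by $(\max_i |a_i|)\|\sum_{s \preceq t_m} e^*_s\| \leq \theta^{-(n-1)} \max_i |a_i|$. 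Once this constant is correctly tracked, every remaining ingredient is already packaged in the preceding lemmas, and assembling them in the order above finishes the corollary.
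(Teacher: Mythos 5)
Your proposal is correct and follows essentially the same route as the paper: upper bound on $Sz(L)$ via Lemma \ref{ebola}(i),(iv) and Claim \ref{trivial claim}, lower bound via Lemma \ref{ebola}(v), transfer through Lemma \ref{unconditionalization} and Theorem \ref{theorem}, then Proposition \ref{proximity}(iv) and (i) for $I_1$ and the upper bound on $I_\infty$, and finally the chains $(e^*_{t|_i})_{i=1}^{|t|}$ placed in $T_\infty(\mathfrak{G}^*,\theta^{1-n})$ by the suppression/unconditionality estimate for the lower bound on $I_\infty$. The only cosmetic difference is that you route the last step through Lemma \ref{monotone lemma}, while the paper observes directly that the map $t\mapsto (e^*_{t|_i})_{i=1}^{|t|}$ exhibits $o(T_\infty(\mathfrak{G}^*,\theta^{1-n}))\geqslant o(T_n)$.
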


\begin{proof} We first recall that $B_{\mathfrak{G}^*}=\overline{\text{co}}^{w^*}(K)$. Therefore by Theorem \ref{theorem}, we only need to compute $Sz(K)$ in order to compute $Sz(\mathfrak{G})$.

It follows from Lemma \ref{ebola}$(v)$ that $Sz(\mathfrak{G})\geqslant \sup_n \xi^n= (\omega^{\omega^\alpha})^\omega= \omega^{\omega^{\alpha+1}}$.  It follows from Lemma \ref{ebola}$(i)$ and $(iv)$ as well as Claim \ref{trivial claim} that $Sz(L)\leqslant \omega^{\omega^{\alpha+1}}$, since for any $\ee>0$, $\sup_{n,k, \gamma} Sz_\ee(L_{n, k, \gamma})\leqslant \xi^j+1<\omega^{\omega^{\alpha+1}}$, where $j\in \nn$ is such that $\ee>2\theta^j$.

It follows from Theorem \ref{theorem} and Lemma \ref{unconditionalization} that $$Sz(\mathfrak{G})=\Gamma(Sz(K))=\Gamma(Sz(L))=\omega^{\omega^{\alpha+1}}.$$ Since $\omega^{\alpha+1}\geqslant \omega$, it follows from Proposition \ref{proximity} that $$Sz(\mathfrak{G}) = I_1(\mathfrak{G})\geqslant I_\infty(\mathfrak{G}^*).$$   We need only show that $I_\infty(\mathfrak{G}^*)\geqslant \omega^{\omega^{\alpha+1}}$.   To that end, we claim that for any $n\in \nn$ and any $t\in G_{n,  n, 0}$,  $(e^*_{t|_i})_{i=1}^{|t|}\in T_\infty(\mathfrak{G}^*, \theta^{1-n})$, which follows from the fact that $\theta^{n-1}\sum_{i=1}^{|t|} e_{t|_i}^*\in L\subset B_{\mathfrak{G}^*}$ and the fact that $(e^*_{t|_i})_{i=1}^{|t|}$ is normalized and $1$-unconditional.  Since $o(G_{n,n,0})=\xi^n+1$, we deduce that $\sup_n o(T_\infty(\mathfrak{G}^*, \theta^{1-n}))\geqslant \sup_n \xi^n+1=\omega^{\omega^{\alpha+1}}$, whence the result follows.

\end{proof}

Fix  ordinals $\alpha, \beta$ such that $\beta$ has countable cofinality and $\alpha+\beta>\max\{\alpha, \beta\}$. Note that $\omega^\beta$ is also of countable cofinality.   Fix a sequence $0<\beta_1<\beta_2<\ldots$ such that $\beta_n\uparrow \omega^\beta$.  Let $(B_n)$ be a sequence of disjoint $B$-trees such that $o(B_n)=\omega^\alpha \beta_n$ and $T_n:=\{\varnothing\}\cup B_n$ is full.  Let $B=\cup_{n=1}^\infty B_n$.  Fix a sequence $(\theta_n)\subset (0,1)$ tending to $0$.  For each $n\in \nn$ and each $\gamma\in [0, \beta_n)$, let $G_{n, \gamma}=B_n^{\omega^\alpha \gamma}\setminus B_n^{\omega^\alpha(\gamma+1)}$. Note that $(G_{n, \gamma})_{\gamma<\beta_n}$ partitions $B_n$ into segment complete subsets.   Let $c_{00}(B)$ denote the span of the canonical Hamel basis $(e_t: t\in B)$ and let $e_t^*$ denote the coordinate functional to $e_t$.    For each $n\in \nn$ and $\gamma\in [0, \beta_n)$, let $$K_n=\{0\}\cup \Bigl\{\theta_n\sum_{\varnothing \prec s \preceq t} \ee_s e_s^*: \ee_s \in S_\mathbb{F}, t\in B_n\Bigr\}, \hspace{5mm} L_n=|K_n|,$$ $$K_{n, \gamma}=\{0\}\cup \Bigl\{\sum_{G_{n,\gamma}\ni s\preceq t} \ee_s e_s^*: \ee_s\in S_\mathbb{F}, t\in G_{n,\gamma}\Bigr\}, \hspace{5mm} L_{n, \gamma}=|K_{n, \gamma}|,$$ $$K=\Bigl(\underset{n=1}{\overset{\infty}{\bigcup}}K_n\Bigr)\cup \Bigl(\underset{n=1}{\overset{\infty}{\bigcup}}\underset{\gamma<\beta_n}{\bigcup} K_{n, \gamma}\Bigr), \hspace{5mm} L=|K|.$$    Let $\mathfrak{S}$ denote the completion of $c_{00}(B)$ under the norm $\|x\|=\sup_{x^*\in K}|x^*(x)|$.  Then $(e_t)_{t\in B}$ becomes a normalized, $1$-unconditional basis of $\mathfrak{S}$.    Furthermore, $K\subset B_{\mathfrak{S}^*}$ and $\overline{\text{co}}^{w^*}(K)=B_{\mathfrak{S}^*}$ by Theorem \ref{hb}.

\begin{lemma}\begin{enumerate}[(i)]\item The collection $I=\{L_n, L_{n,\gamma}: n\in \nn, \gamma<\beta_n\}$ satisfies the hypotheses of Claim \ref{trivial claim}. \item If $\ee>2\theta_n$, $Sz_\ee(L_n)< \omega^\beta$. \item If $0<\ee<\theta_n$, $Sz_\ee(L_n)=\omega^\alpha \beta_n+1$. \item For any $\ee>0$, any $n\in \nn$, and any $\gamma<\beta_n$, $Sz_\ee(L_{n, \gamma})\leqslant \omega^\alpha+1$.  \end{enumerate}

\end{lemma}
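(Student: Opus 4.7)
The strategy for all four parts is to analyze $L_n$ and $L_{n,\gamma}$ via homeomorphisms onto the tree $T_n$ and a quotient thereof, each equipped with its coarse wedge topology, in close parallel with Lemma \ref{ebola}. For (i), I would verify the hypotheses of Claim \ref{trivial claim} directly. The map $\varphi_n:T_n\to L_n$ sending $\varnothing\mapsto 0$ and $t\mapsto \theta_n\sum_{\varnothing\prec s\preceq t}e_s^*$ is a bijective homeomorphism (verified as in Lemma \ref{ebola}(i) using that $T_n$ is compact in its coarse wedge topology), and $L_{n,\gamma}$ is likewise the homeomorphic image of the quotient of $T_n^{\omega^\alpha\gamma}$ obtained by collapsing $T_n^{\omega^\alpha(\gamma+1)}$ to a single point; this yields $w^*$-compactness. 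Distinct sets in $I$ meet only at $0$: different $n$ give disjoint supports, different $\gamma$ inside a fixed $B_n$ give supports in disjoint $G_{n,\gamma}$'s, and $L_n$ is distinguished from each $L_{n,\gamma}$ by its scalar coefficient ($\theta_n<1$ versus $1$). For the net condition, each $e_s$ pairs nontrivially only with functionals from $L_n$ and the unique $L_{n,\gamma_s}$ (where $s\in B_n\cap G_{n,\gamma_s}$), so any net eventually avoiding both those sets (modulo $\{0\}$) is eventually zero on each $e_s$, and boundedness in $B_{\mathfrak{S}^*}$ propagates this to $w^*$-nullity.

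Part (ii) is the main obstacle, and it rests on the following norm estimate, which I expect to be the most delicate step: for any $t\prec t'$ in $B_n$ and $C:=\{s:t\prec s\preceq t'\}$, $\|\sum_{s\in C}e_s^*\|_{\mathfrak{S}^*}\leqslant p$, where $p$ is the number of distinct values taken by $\rho(s)$ for $s\in C$, with $\rho(s)$ denoting the unique $\gamma$ such that $s\in G_{n,\gamma}$. Segment completeness of each $G_{n,\gamma}$ ensures that every slice $C\cap G_{n,\gamma}$ is a chain-segment whose indicator functional is a suppression projection of an element of $K_{n,\gamma}\subset B_{\mathfrak{S}^*}$; $1$-unconditionality of the basis then bounds each slice-norm by $1$, and the triangle inequality completes the estimate. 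Using Proposition \ref{szlenk facts}(vii), any point of $s_\ee^{\zeta+1}(L_n)$ admits an approaching net from $s_\ee^\zeta(L_n)$ with norm-difference exceeding $\ee/2>\theta_n$ (the latter since $\ee>2\theta_n$); this forces $p\geqslant 2$ along the associated chain, and since the $\rho$-values are non-increasing along a chain, one concludes $\rho(t)\geqslant \rho(t_\lambda)+1$. A transfinite induction then gives $s_\ee^\zeta(L_n)\subset \{0\}\cup\{\varphi_n(t):t\in B_n,\ \rho(t)\geqslant \zeta\}$, and at $\zeta=\beta_n$ the right side collapses to $\{0\}$ (since $\rho(t)<\beta_n$ for all $t\in B_n$). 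Hence $Sz_\ee(L_n)\leqslant \beta_n+1<\omega^\beta$, the strict inequality using that $\omega^\beta$ is a limit ordinal.

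For (iii), the upper bound $Sz_\ee(L_n)\leqslant i(L_n)=o(T_n)=\omega^\alpha\beta_n+1$ is the general inequality $Sz_\ee\leqslant i$ applied to the homeomorphic copy $T_n$ (fullness giving equality of tree and Cantor-Bendixson derivatives). For the matching lower bound when $0<\ee<\theta_n$, I would show $\varphi_n(T_n^\zeta)\subset s_\ee^\zeta(L_n)$ by induction: at a successor $\zeta+1$, fullness of $T_n$ supplies infinitely many distinct immediate successors $t_\lambda\in T_n^\zeta$ of any $t\in T_n^{\zeta+1}$, and these converge to $t$ in the coarse wedge topology with $\|\varphi_n(t_\lambda)-\varphi_n(t)\|_{\mathfrak{S}^*}=\theta_n\|e_{t_\lambda}^*\|_{\mathfrak{S}^*}=\theta_n>\ee$. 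Setting $\zeta=\omega^\alpha\beta_n$ then yields $0=\varphi_n(\varnothing)\in s_\ee^{\omega^\alpha\beta_n}(L_n)$. Finally, (iv) is handled by the analogous induction $s_\ee^\zeta(L_{n,\gamma})\subset \{0\}\cup\{\psi(t):t\in G_{n,\gamma}\cap T_n^{\omega^\alpha\gamma+\zeta}\}$ (with $\psi$ the quotient homeomorphism onto $L_{n,\gamma}$); the crucial observation for the successor step is that no proper extension of a $t\in G_{n,\gamma}$ within $T_n^{\omega^\alpha\gamma}$ can lie in $T_n^{\omega^\alpha(\gamma+1)}$ (since derived trees are closed under initial segments), so any approaching net with $\psi(t)\neq 0$ must, after passing to a subnet, arise from $t_\lambda\in G_{n,\gamma}$ with $t\prec t_\lambda$ in $T_n^{\omega^\alpha\gamma+\zeta}$, making $t$ a topological limit point, hence $t\in T_n^{\omega^\alpha\gamma+\zeta+1}$. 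At $\zeta=\omega^\alpha$, $G_{n,\gamma}\cap T_n^{\omega^\alpha(\gamma+1)}=\varnothing$, and so $Sz_\ee(L_{n,\gamma})\leqslant \omega^\alpha+1$.
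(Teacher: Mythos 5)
Your proposal is correct and follows essentially the same route as the paper: the homeomorphisms of $L_n$ and $L_{n,\gamma}$ with $T_n$ and its quotient in the coarse wedge topology, the suppression-projection bound $\|\sum_{s\in C\cap G_{n,\gamma'}}e_s^*\|\leqslant 1$ forcing the derivation to descend through the partition $(G_{n,\gamma})_\gamma$ in (ii) and (iv), and fullness of $T_n$ for the lower bound in (iii). Your condition $\rho(t)\geqslant\zeta$ is exactly the paper's condition $t\in T_n^{\omega^\alpha\zeta}$, so the induction in (ii) coincides with the paper's claim $s_\ee^\gamma(L_n)\subset\varphi(T_n^{\omega^\alpha\gamma})$.
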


\begin{proof}$(i)$ Since any member of $L_n$ or $L_{n, \gamma}$ must have its support in $B_n$, which is disjoint from $B_m$ when $m\neq n$, $L_n\cap L_m=\{0\}$ when $n\neq m$.  Similarly, for any $\gamma_1$, $\gamma_2$, $L_m\cap L_{n, \gamma_2}=\{0\}$ and $L_{m, \gamma_1}\cap L_{n, \gamma_2}=\{0\}$ when $m\neq n$.   When $\gamma_1\neq \gamma_2<\beta_n$, $L_{n, \gamma_1}\cap L_{n, \gamma_2}=\{0\}$, since for any $x^*\in L_{n, \gamma_1}$, $\supp(x^*)\in G_{n, \gamma_1}$ which is disjoint from $G_{n, \gamma_2}$.   For any $s\in B$, there exists a unique pair $(n, \gamma)$ such that $s\in G_{n, \gamma}$.  Therefore if $x^*\in L$ is such that $x^*(e_s)\neq 0$, $x^*\in L_n\cup L_{n, \gamma}$.  Thus we deduce as in Lemma \ref{ebola} that if $(x^*_\lambda)\subset L$ is any net such that for any $P\in I$, $(x^*_\lambda)$ is eventually not in $P$, $x^*_\lambda\underset{w^*}{\to}0$.  Showing that $L_n$, $L_{n, \gamma}$ are $w^*$-compact again follow as in Lemma \ref{ebola}.

$(ii)$ Let $\varphi:T_n\to L_n$ denote the function $\varphi(\varnothing)=0$ and $\varphi(t)=\sum_{\varnothing\prec s\preceq t}e_s^*$.  Arguing as in $(i)$, we note that this is a homeomorphism.  We claim that if $\ee>2\theta_n$, then for any $0\leqslant \gamma<\beta_n$, $$s_\ee^\gamma(L_n) \subset \varphi(T_n^{\omega^\alpha \gamma}).$$   Since $\varnothing=T_n^{\omega^\alpha \beta_n+1}$, this will yield that $Sz_\ee(L_n)\leqslant \beta_n+1<\omega^\beta$.    We prove the claim by induction, with the base case and limit ordinal case being trivial.  Assume that $s_\ee^\gamma(L_n)\subset\varphi(T_n^{\omega^\alpha \gamma})$ and $x^*\in s_\ee^{\gamma+1}(L_n)$.   Then there exists a net $(x^*_\lambda)\subset s_\ee^\gamma(L_n)$ such that $x^*_\lambda\underset{w^*}{\to} x^*$ and $\|x^*_\lambda-x^*\|>\ee/2>\theta_n$ for all $\lambda$.    Let $t=\varphi^{-1}(x^*)\in T_n^{\omega^\alpha \gamma}$ and let $t_\lambda=\varphi^{-1}(x^*_\lambda)\in T_n^{\omega^\alpha \gamma}$.  If $t\in T_n^{\omega^\alpha(\gamma+1)}$, then $x^*\in \varphi(T_n^{\omega^\alpha (\gamma+1)})$ as desired, so assume $t\in T_n^{\omega^\alpha \gamma}\setminus T_n^{\omega^{\alpha}(\gamma+1)}=G_{n, \gamma}$.  We may assume $t\preceq t_\lambda$ for all $\lambda$, whence $t_\lambda\notin T_n^{\omega^\alpha(\gamma+1)}$, and it follows that $(t_\lambda)\subset G_{n,\gamma}$.  Fix $\lambda$.  From this it follows that $\sigma:=\{s: t\prec s\preceq t_\lambda\}$ is a subset of $\sigma_1:=\{s: G_{n,\gamma}\ni s\preceq t\}$ and $x^*_\lambda-x^*=\theta_n\sum_{s\in \sigma} e^*_s$.   Since $\theta_n\sum_{s\in \sigma_1} e_s^*\in \theta_n L$, and $f_\lambda-f$ is a suppression projection of $\theta_n \sum_{s\in \sigma_1} e_s^*\in L\subset B_{\mathfrak{S}^*}$, $$\|x^*_\lambda-x^*\| \leqslant \theta_n\|\sum_{s\in \sigma_1} e_s^*\|\leqslant \theta_n<\ee/2,$$ and this contradiction yields the result.

$(iii)$ This is the same as Lemma \ref{ebola}$(iv)$.

$(iv)$ We claim that for any $n\in \nn$ and any $\gamma<\beta_n$, $$s^\zeta_\ee(L_{n, \gamma})\subset \{0\}\cup \Bigl\{\theta_n \sum_{G_{n, \gamma}\ni s\preceq t} e_s^*: t\in G_{n, \gamma}\cap T_n^{\omega^\alpha \gamma + \zeta}\Bigr\}.$$  Note that since $G_{n, \gamma}\cap T_n^{\omega^\alpha (\gamma+1)}=\varnothing$, this claim will show that $s^{\omega^\alpha}_\ee(L_{n, \gamma})\subset \{0\}$, yielding $(iv)$.   The base and limit ordinal cases are trivial. Assume the $\zeta$ case holds and for some $(t_\lambda), t\in G_{n, k}\cap T^{\omega^\alpha \gamma+\zeta}$, $\theta_n\sum_{G_{n, \gamma}\ni s\preceq t} e_s^*\neq \theta_n \sum_{G_{n, \gamma}\ni s\preceq t_\lambda} e_s^*$ and $$\theta_n \sum_{G_{n, \gamma}\ni s\preceq t_\lambda} e_s^*\underset{w^*}{\to} \theta_n\sum_{G_{n, \gamma}\ni s\preceq t} e_s^*.$$  From this it follows that eventually $t\prec t_\lambda$, whence $t\in T_n^{\omega^\alpha \gamma+\zeta+1}$, and the result follows.

\end{proof}

\begin{corollary} $I_1(\mathfrak{S})\geqslant I_\infty(\mathfrak{S}^*) \geqslant Sz(\mathfrak{S})=\omega^{\alpha+\beta}$.  If $\alpha+\beta\geqslant \omega$, these inequalities are equalities.  

\end{corollary}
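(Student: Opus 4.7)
The plan is to first compute $Sz(\mathfrak{S})=\omega^{\alpha+\beta}$ and then read off both inequalities. Since $\overline{\text{co}}^{w^*}(K)=B_{\mathfrak{S}^*}$, Theorem \ref{theorem} gives $Sz(\mathfrak{S})=\Gamma(Sz(K))$, and Lemma \ref{unconditionalization} gives $Sz(K)=Sz(L)$. Because $\omega^{\alpha+\beta}$ is itself a gamma number, it suffices to prove $Sz(L)=\omega^{\alpha+\beta}$.

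For the upper bound I will apply Claim \ref{trivial claim} to the collection $I=\{L_n,L_{n,\gamma}\}$, whose compatibility with the claim is exactly item (i) of the preceding lemma. Fix $\ee>0$ and choose $n_0$ so that $2\theta_n<\ee$ for all $n\geqslant n_0$. Item (ii) bounds $Sz_\ee(L_n)$ by an ordinal below $\omega^\beta$ when $n\geqslant n_0$; item (iii), together with monotonicity of $Sz_\ee$ in $\ee$, gives the crude estimate $Sz_\ee(L_n)\leqslant\omega^\alpha\beta_{n_0}+1$ for the finitely many $n<n_0$; and item (iv) bounds each $Sz_\ee(L_{n,\gamma})$ by $\omega^\alpha+1$. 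The hypothesis $\alpha+\beta>\max\{\alpha,\beta\}$ forces $\alpha$ and $\beta$ both to be positive, so each of the three bounds lies strictly below $\omega^{\alpha+\beta}=\omega^\alpha\omega^\beta$. Claim \ref{trivial claim}(ii) therefore yields $Sz_\ee(L)<\omega^{\alpha+\beta}$, and since $\omega^{\alpha+\beta}$ is a limit ordinal, taking the supremum over $\ee$ gives $Sz(L)\leqslant\omega^{\alpha+\beta}$. The matching lower bound is immediate from item (iii): $Sz(L)\geqslant Sz(L_n)\geqslant\omega^\alpha\beta_n+1$ for each $n$, and since $\beta_n\uparrow\omega^\beta$ the supremum over $n$ is $\omega^\alpha\omega^\beta=\omega^{\alpha+\beta}$.

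For the inequalities, Proposition \ref{proximity}(i) already yields $I_1(\mathfrak{S})\geqslant I_\infty(\mathfrak{S}^*)$, so only $I_\infty(\mathfrak{S}^*)\geqslant\omega^{\alpha+\beta}$ remains. Mimicking the end of the proof of Corollary \ref{ebola corollary}, for each $n$ I would exhibit a monotone, length-preserving map $B_n\to T_\infty(\mathfrak{S}^*,\theta_n^{-1})\setminus\{\varnothing\}$ by $t\mapsto(e^*_{t|_i})_{i=1}^{|t|}$; the upper $\ell_\infty$-estimate $\|\sum a_ie^*_{t|_i}\|\leqslant\theta_n^{-1}\max|a_i|$ follows from $1$-unconditionality together with the fact that $\theta_n\sum_{\varnothing\prec s\preceq t}e^*_s\in K\subset B_{\mathfrak{S}^*}$, while the lower estimate is immediate from $1$-unconditionality of $(e^*_s)_{s\in B}$. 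Lemma \ref{monotone lemma} then gives $o(T_\infty(\mathfrak{S}^*,\theta_n^{-1}))\geqslant\omega^\alpha\beta_n+1$, and taking the supremum over $n$ yields $I_\infty(\mathfrak{S}^*)\geqslant\omega^{\alpha+\beta}$. Finally, when $\alpha+\beta\geqslant\omega$ we have $Sz(\mathfrak{S})=\omega^{\alpha+\beta}\geqslant\omega^\omega$, so Proposition \ref{proximity}(iv) forces $I_1(\mathfrak{S})=Sz(\mathfrak{S})$, giving equality throughout. The main point requiring care is the bookkeeping in the upper bound for $Sz(L)$: one must confirm, using the hypothesis $\alpha+\beta>\max\{\alpha,\beta\}$, that every term in the supremum over $I$ stays strictly below $\omega^{\alpha+\beta}$ for each fixed $\ee$, and this is the only place where the assumption on $\alpha,\beta$ is actually used.
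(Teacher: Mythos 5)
Your proof is correct and is precisely the adaptation of the proof of Corollary \ref{ebola corollary} that the paper intends when it writes ``the proof is the same.'' One small caveat on the upper bound: positivity of $\alpha$ alone does not give $\omega^\beta<\omega^\alpha\omega^\beta$ (for instance $\omega\cdot\omega^\omega=\omega^{1+\omega}=\omega^\omega$), so for the terms $Sz_\ee(L_n)$, $n\geqslant n_0$, you should appeal directly to the hypothesis $\alpha+\beta>\beta$, which gives $\omega^\beta<\omega^{\alpha+\beta}$.
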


The proof is the same as the proof of Corollary \ref{ebola corollary}.   

\begin{corollary} Let $$\Gamma=\{\omega^\xi: \xi\in \textbf{\emph{Ord}}, \xi\text{\ has countable cofinality}\},$$ $$\Lambda=\{\omega^{\omega^\xi}: \xi\text{\ \emph{is a limit ordinal}}\}$$, $$\Omega=\Gamma\setminus \Lambda.$$   \begin{enumerate}[(i)]\item For any ordinal $\xi$, there exists a Banach space $X$ with $Sz(X)=\xi$ if and only if $\xi\in \Omega$.   \item For any ordinal $\xi$, there exists a Banach space $X$ with $I_1(X)=\xi$ if and only if $\xi\in \nn\cup \Omega$. \item For any ordinal $\xi$, there exists a Banach space $\xi$ with $I_\infty(X)=\xi$ if and only if $\xi\in \nn\cup \Omega$.   \end{enumerate}

\end{corollary}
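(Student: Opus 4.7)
The plan is to combine the necessary restrictions established earlier in the paper with the constructions $\mathfrak{G}$ and $\mathfrak{S}$ above to obtain both directions of each of (i), (ii), (iii).

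For necessity, fix a Banach space $X$ with $Sz(X)<\infty$. Proposition \ref{szlenk facts}(v) yields $Sz(X)=\omega^\eta$ for some ordinal $\eta$. The key observation for countable cofinality of $\eta$ is that, for any non-empty $w^*$-compact $K\subset X^*$, $Sz_\ee(K)$ is always a successor ordinal: each $s_\ee^\zeta(K)$ is a $w^*$-closed subset of $K$, hence $w^*$-compact, and if $\mu$ were a limit ordinal with $s_\ee^\zeta(K)\neq\varnothing$ for every $\zeta<\mu$, then the decreasing intersection $s_\ee^\mu(K)=\bigcap_{\zeta<\mu}s_\ee^\zeta(K)$ would be non-empty by $w^*$-compactness, contradicting the minimality in the definition of $Sz_\ee$. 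Since $Sz(X)=\sup_n Sz_{1/n}(X)$ is a countable supremum of successor ordinals and every gamma number strictly greater than $1$ is a limit ordinal, $Sz_{1/n}(X)<Sz(X)$ for every $n$ whenever $Sz(X)>1$, forcing $\text{cf}(Sz(X))\leqslant\omega$ and hence $\text{cf}(\eta)\leqslant\omega$. Together with Proposition \ref{proximity}(iii), this gives $Sz(X)\in\Omega$. For $I_1$ and $I_\infty$, Proposition \ref{proximity}(ii) handles the finite-dimensional case ($I_1(X)=1+\dim X\in\nn$); in the infinite-dimensional case the same cofinality argument goes through using the countable suprema $I_1(X)=\sup_n o(T_1(X,n))$ and its $c_0$ analogue, and combined with Proposition \ref{proximity}(iii) yields $I_1(X),I_\infty(X)\in\nn\cup\Omega$.

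For sufficiency, integer values of $\xi$ are realized by finite-dimensional spaces: $\mathbb{F}^{n-1}$ gives $I_1=I_\infty=n$ by Proposition \ref{proximity}(ii), and the zero space gives $\xi=1$ for all three indices. For infinite $\xi=\omega^\eta\in\Omega$, I write $\eta$ in Cantor normal form and split into two cases. If $\eta=\omega^{\alpha+1}$ is a gamma number with successor exponent (including $\eta=\omega$ when $\alpha=0$), then Corollary \ref{ebola corollary} applied to $\mathfrak{G}$ with this parameter yields $Sz(\mathfrak{G})=I_1(\mathfrak{G})=I_\infty(\mathfrak{G}^*)=\omega^{\omega^{\alpha+1}}=\xi$. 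Otherwise $\eta$ is not a gamma number, so its Cantor normal form $\omega^{\eta_1}n_1+\ldots+\omega^{\eta_k}n_k$ has either $k\geqslant 2$ or $n_1\geqslant 2$; setting $\beta=\omega^{\eta_k}$ and letting $\alpha$ be the remainder gives a decomposition $\eta=\alpha+\beta$ with $\alpha,\beta>0$ (so $\alpha+\beta>\max\{\alpha,\beta\}$) and $\text{cf}(\beta)=\text{cf}(\omega^{\eta_k})=\text{cf}(\eta)\leqslant\omega$, matching the hypotheses of the $\mathfrak{S}$ construction. Thus $Sz(\mathfrak{S})=\omega^{\alpha+\beta}=\xi$, and since $\eta$ is a limit ordinal in this case (so $\alpha+\beta\geqslant\omega$), also $I_1(\mathfrak{S})=I_\infty(\mathfrak{S}^*)=\xi$ by the corresponding corollary for $\mathfrak{S}$.

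The residual cases are $\xi=\omega^n$ for $1\leqslant n<\omega$. For $\xi=\omega$ the standard spaces $c_0$ and $\ell_2$ realize $Sz=I_1=\omega$ and $I_\infty=\omega$ respectively. For $\xi=\omega^n$ with $n\geqslant 2$, $\mathfrak{S}$ with $\alpha=n-1$, $\beta=1$ realizes $Sz=\omega^n$, while the corresponding $I_1$ and $I_\infty$ values are realized by classical Schreier or Tsirelson-Schreier spaces of order $n-1$. The main obstacle of the argument is the Cantor normal form bookkeeping: verifying that the exclusions defining $\Omega$ are exactly what is needed to produce a parameter $(\alpha,\beta)$ with $\beta$ of countable cofinality feeding into either the $\mathfrak{G}$ or $\mathfrak{S}$ construction, since the excluded ordinals $\omega^{\omega^\delta}$ for $\delta$ a limit are precisely those $\xi=\omega^\eta$ with $\eta$ a gamma number whose exponent is a limit, which would leave no valid decomposition available. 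Once this correspondence is pinned down, the corollary follows by assembling Corollary \ref{ebola corollary} and its $\mathfrak{S}$ analogue.
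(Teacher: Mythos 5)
Your proposal is correct and follows essentially the same route as the paper: the same successor-ordinal/countable-supremum cofinality argument combined with Proposition \ref{proximity}(iii) for necessity, and the same Cantor normal form case split feeding parameters into $\mathfrak{G}$ (gamma-number exponent with successor index) or $\mathfrak{S}$ (non-gamma-number exponent) for sufficiency, with the paper realizing the residual values $I_1=I_\infty=\omega^{n+1}$ via the spaces $X_n$ of \cite{BCFW} rather than your Schreier-type spaces. One small slip: in the non-gamma-number branch $\eta$ need not be a limit ordinal (e.g.\ $\eta=\omega+1$), but the hypothesis actually needed is $\alpha+\beta=\eta\geqslant\omega$, which you state parenthetically and which holds precisely outside the residual cases $\eta<\omega$ that you then treat separately, so the argument stands.
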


\begin{proof} Let $\mathfrak{Sz}$ denote the class of ordinals $\xi$ such that there exists a Banach space $X$ with $Sz(X)=\xi$.  Let $\mathfrak{I}_1$ denote the class of ordinals $\xi$ such that there exists a Banach space $X$ with $I_1(X)=\xi$, and let $\mathfrak{I}_\infty$ be defined similarly.  Note that for any Banach space $X$, either $Sz(X)=1$, $Sz(X)=\infty$, or $Sz(X)=\sup_n Sz(B_{X^*}, 1/n)$ and $Sz(B_{X^*}, 1/n)<Sz(X)$.  This is because $Sz(B_{X^*}, 1/n)$ is a successor, while $Sz(X)$ is a limit.  From this we deduce that $Sz(X)$ must have countable cofinality if it is an ordinal.  Similarly, $I_1(X)$ is finite, $I_1(X)=\infty$, or $I_1(X)=\sup_n o(T_1(X, n))$, and $I_1(X)$ has countable cofinality.  The same is true of $I_\infty(X)$.  This fact combined with Proposition \ref{szlenk facts}$(v)$ yields that $\mathfrak{Sz}\subset \Gamma$, and combining this fact with Proposition \ref{proximity}, $\mathfrak{I}_1, \mathfrak{I}_\infty\subset \Gamma\cup \nn$.  Again using Proposition \ref{proximity},  $\mathfrak{Sz}, \mathfrak{I}_1, \mathfrak{I}_\infty\subset \Omega$. Thus we deduce one containment of each of $(i)$-$(iii)$.

For any natural number $n$, $I_1(\ell_2^{n-1})=I_\infty(\ell_2^{n-1})=n$, where $\ell_2^0=\{0\}$.   Thus $\nn\subset \mathfrak{I}_1, \mathfrak{I}_\infty$. Note that $Sz(\mathbb{F})=1$, so it remains only to show that  $\Omega\setminus\{1\}\subset \mathfrak{Sz}, \mathfrak{I}_1, \mathfrak{I}_\infty$.   

Fix $\xi\in \Omega\setminus\{1\}$. Then $\xi=\omega^\zeta$ for some $\zeta>0$, and since $\xi$ has countable cofinality, so does $\zeta$.   Write the Cantor normal form of $\zeta$ as $\zeta=\omega^{\gamma_1}n_1+\ldots + \omega^{\gamma_k} n_k$.  If $k=n_k=1$, $\xi=\omega^{\omega^\gamma}$ for some $\gamma$.  Note that $\gamma=0$ or $\gamma$ is a successor, otherwise $\xi\in \Lambda$. If $\gamma=0$, $\xi=\omega$, and $I_1(\ell_2)=I_\infty(\ell_2)=Sz(\ell_2)=\omega$.    Assume $\gamma$ is a successor, say $\gamma=\alpha+1$.  Then with $\mathfrak{G}$ defined as above, $$Sz(\mathfrak{G})=I_1(\mathfrak{G})=I_\infty(\mathfrak{G}^*)=\omega^{\omega^{\alpha+1}}=\xi.$$  Next, suppose that either $k>1$ or $n_k>1$.  Then let $\alpha=\omega^{\gamma_1}n_1+\ldots + \omega^{\gamma_k}(n_k-1)$ and $\beta= \omega^{\gamma_k}$. Note that $\alpha+\beta=\zeta$, and since $\zeta$ has countable cofinality, so does $\beta$.   By our computations above, $Sz(\mathfrak{S})=\omega^{\alpha+\beta}=\omega^\zeta$.  If $\zeta\geqslant \omega$,  $$I_1(\mathfrak{S})=I_\infty(\mathfrak{S}^*)=\omega^{\alpha+\beta}=\omega^\zeta=\xi.$$  Thus it remains to show the existence of  Banach spaces $X,Y$ with $I_1(X)=I_\infty(Y)=\omega^{n+1}$ for any $n\in \nn$.  Let $X_1=(\oplus_k\ell_1^k)_{\ell_2}$.  Assuming $X_n$ has been defined, let $X_{n+1}=(\oplus_k \ell_1^k(X_n))_{\ell_2}$.    It was shown in \cite{BCFW} that for each $n\in\nn$, $I_1(X_n)=\omega^{n+1}$ and $I_\infty(X^*_n)=\omega^{n+1}$.

\end{proof}

\section{Combinatorial necessities}

The proof of Theorem \ref{theorem} is a modification of an argument due to Schlumprecht \cite{Schlumprecht}.   Schlumprecht used the Schreier families and repeated averages hierarchy (first given in \cite{AMT}) to show that in order to check that all convex combinations of certain vectors are uniformly large, it is sufficient to check only over certain convex combinations the coefficients of which come from the repeated averages hierarchy.  Our proof differs from the proof given in \cite{Schlumprecht} in the following ways.  First, that proof used a characterization of the Szlenk index of a separable Banach space due to Alspach, Judd, and Odell \cite{AJO}.  Since we want to compute the Szlenk index of arbitrary $w^*$-compact sets, we use a different characterization of the Szlenk index from \cite{Causey} which was inspired by the result from \cite{AJO}.  Second, since we want to consider possibly uncountable ordinals and sets in the duals of non-separable Banach spaces, we must find suitable replacements for both the Schreier families and the repeated averages hierarchy, which we do in this section.  Last, we must prove that in order to show that all convex combinations of certain vectors are uniformly large, it is sufficient to check only over the convex combinations the coefficients of which come from our analogue of the repeated averages hierarchy, we must prove a combinatorial result, Theorem \ref{ultimatelemma}.

Given a sequence $t=(\xi_i)_{i=1}^n$ of ordinals and an ordinal $\xi$, we let $\xi+t=(\xi+\xi_i)_{i=1}^n$.  Given a set $G\subset [0, \zeta)^{<\nn}$, we let $\xi+G=\{\xi+t: t\in G\}\subset [\xi, \xi+\zeta)^{<\nn}$.    We next define the $B$-trees $\Gamma_\xi$, which will serve as the skeletons of our analogues of the Schreier families. For each $\xi\in \ord$, $\gr_\xi$ will be a $B$-tree on the set $[0, \omega^\xi)$ having order $\omega^\xi$.  We let $\gr_0=\{(0)\}$.    Next, assuming $\gr_\xi\subset [0, \omega^\xi)^{<\nn}$ has been defined, we define the $B$-trees $\gr_{\xi, n}$ for $n\in \nn$.   We let $\gr_{\xi, 1}=\gr_\xi$.  If $\gr_{\xi, n}$ has been defined, we let $$\gr_{\xi, n+1}=\bigl\{\omega^\xi n+s: s\in \gr_\xi\bigr\}\cup \bigl\{(\omega^\xi n+t)\cat u: s\in \gr_\xi, t\in MAX(\gr_\xi), u\in \gr_{\xi, n}\bigr\}$$  and $\gr_{\xi+1}=\cup_{n=1}^\infty \gr_{\xi, n}$.   Last, if $\xi$ is a limit ordinal and if $\gr_\zeta$ has been defined for each $\zeta<\xi$, we let $$\gr_\xi = \bigcup_{\zeta<\xi}( \omega^\zeta + \gr_{\zeta+1}).$$   We collect the following easy facts.  The proofs are either easy induction arguments or use standard facts about ordinals which can be found in \cite{Monk}.  

\begin{proposition}Let $\xi$ be an ordinal.  \begin{enumerate}[(i)]\item $\gr_\xi$ is a $B$-tree with order $\omega^\xi$.   \item $\omega^\xi+\gr_{\xi+1}\subset [\omega^\xi, \omega^{\xi+1})^{<\nn}$. \item If $\xi$ is a limit ordinal, the union $\gr_\xi=\cup_{\zeta<\xi}(\omega^\zeta + \gr_{\zeta+1})$ is a totally incomparable union.  \item If $(\xi_i)_{i=1}^p\in \gr_{\xi, n}$ for some $n\in\nn$, $\xi_1\in [\omega^\xi(n-1), \omega^\xi n)$. \item The union $\gr_{\xi+1}=\cup_{n=1}^\infty \gr_{\xi, n}$ is a totally incomparable union. \item For a given sequence $t$,  $t\in \gr_{\xi, n}$ for some $n\in \nn$ if and only if there exist $1\leqslant m\leqslant n$, $t_1, \ldots, t_m\in \gr_\xi$ so that $t_i$ is maximal in $\Gamma_\xi$ for each $1\leqslant i<m$ and so that $$t=(\omega^\xi(n-1)+ t_1)\cat(\omega^\xi(n-2)+t_2)\cat \ldots \cat(\omega^\xi(n-m)+ t_m),$$ and in this case $m$ and the sequences $t_i$ are unique.   Moreover, for this $t$, $t\in MAX(\gr_{\xi+1})$ if and only if $m=n$ and $t_n\in MAX(\gr_\xi)$.   \end{enumerate}

\label{longprop}
\end{proposition}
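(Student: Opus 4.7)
The plan is a transfinite induction on $\xi$ establishing all six items simultaneously, with an inner induction on $n$ handling the trees $\gr_{\xi,n}$ inside the successor step. The base case $\xi=0$ is immediate: $\gr_0=\{(0)\}$ is a one-element B-tree of order $1=\omega^0$, and items (ii)--(vi) are vacuous.

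For the limit case at $\xi$, assume all items hold for each $\zeta<\xi$. By the inductive form of (ii) applied to each $\zeta+1<\xi$, the set $\omega^\zeta+\gr_{\zeta+1}$ lies in $[\omega^\zeta,\omega^{\zeta+1})^{<\nn}$, using that $\omega^\zeta+\omega^{\zeta+1}=\omega^{\zeta+1}$. Thus for $\zeta_1<\zeta_2<\xi$, first coordinates of sequences from the two shifted pieces lie in disjoint intervals $[\omega^{\zeta_1},\omega^{\zeta_1+1})$ and $[\omega^{\zeta_2},\omega^{\zeta_2+1})$, establishing (iii). A straightforward induction on $\gamma$ shows that for any totally incomparable union of B-trees one has $(\bigcup_\alpha S_\alpha)^\gamma=\bigcup_\alpha S_\alpha^\gamma$, so $o(\bigcup_\alpha S_\alpha)=\sup_\alpha o(S_\alpha)$. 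Applying this yields $o(\gr_\xi)=\sup_{\zeta<\xi}\omega^{\zeta+1}=\omega^\xi$ for limit $\xi$, proving (i); item (ii) is immediate from the construction of $\gr_{\xi+1}$.

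For the successor step at $\xi$, I would run an inner induction on $n$ showing that $\gr_{\xi,n}$ is a B-tree of order $\omega^\xi n$ satisfying (iv) and the structural description (vi). The base $n=1$ is the outer hypothesis. For the step $n\to n+1$, note that $\gr_{\xi,n+1}$ is built from the shifted tree $A:=\omega^\xi n+\gr_\xi$ (itself a B-tree of order $\omega^\xi$) by attaching a copy of $\gr_{\xi,n}$ below each maximal element of $A$. The first-coordinate bound (iv) follows by casing on the two clauses of the definition together with the bound for $\gr_\xi$, and (vi) follows by invoking the inductive (vi) on the tail $u\in\gr_{\xi,n}$. For the order, define $T(S):=A\cup\{(\omega^\xi n+t)\cat u:t\in MAX(\gr_\xi),\,u\in S\}$, so that $\gr_{\xi,n+1}=T(\gr_{\xi,n})$. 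A direct check shows $(T(S))'=T(S')$ whenever $S\neq\varnothing$, while $T$ commutes with intersections along nested chains; by transfinite induction on $\gamma$, $T(S)^\gamma=T(S^\gamma)$ for all $\gamma\leqslant o(S)$. At $\gamma=\omega^\xi n=o(\gr_{\xi,n})$ the attached copies vanish, leaving $T(\varnothing)=A$; a further $\omega^\xi$ derivations finish $A$, so $o(\gr_{\xi,n+1})=\omega^\xi n+\omega^\xi=\omega^\xi(n+1)$.

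Finally, $\gr_{\xi+1}=\bigcup_n\gr_{\xi,n}$: by (iv) for each $\gr_{\xi,n}$, elements of distinct $\gr_{\xi,n}$ have first coordinates in pairwise disjoint intervals $[\omega^\xi(n-1),\omega^\xi n)$, giving total incomparability (v). The order formula for totally incomparable unions then yields $o(\gr_{\xi+1})=\sup_n\omega^\xi n=\omega^{\xi+1}$, completing (i) at $\xi+1$; item (ii) at $\xi+1$ is immediate since $\omega^{\xi+1}+\omega^{\xi+2}=\omega^{\xi+2}$. I expect the main obstacle to be the order computation for the glue-at-leaves construction, which must carefully track derivation across the limit stage $\gamma=\omega^\xi n$ where the attached $\gr_{\xi,n}$ is exhausted and derivation ``resumes'' inside $A$; packaging this via the operator $T$ and its commutation with intersections along decreasing chains keeps the bookkeeping manageable.
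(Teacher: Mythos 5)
Your argument is correct. The paper offers no proof of this proposition, remarking only that the items follow from ``easy induction arguments'' and standard ordinal facts, and your transfinite induction is exactly the intended sort of argument: the disjoint-interval observations give (iii), (iv), (v) and hence additivity of order over the totally incomparable unions, while the glue-at-leaves operator $T$ with $(T(S))'=T(S')$ for $S\neq\varnothing$ and commutation with intersections along decreasing chains cleanly yields $o(\gr_{\xi,n+1})=\omega^\xi n+\omega^\xi$. The only points left implicit -- that $T(S)$ is again a $B$-tree, that the two clauses defining $\gr_{\xi,n+1}$ produce disjoint sets of sequences (so the decomposition in (vi) is unambiguous), and the identification of $MAX(\gr_{\xi,n})$ needed for the last assertion of (vi) -- all follow from the coordinate bounds you already established in (iv).
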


If $t\in \gr_{\xi, n}$, and if $t$ can be written as $(\omega^\xi(n-1)+t_1)\cat\ldots (\omega^\xi(n-m)+t_m)$ as in Proposition \ref{longprop}$(vi)$, we let $\iota_{\xi,n}(t)=t_m$,  $\lambda_{\xi,n}(t)=m$.  We say $t$ is on \emph{level} $m$.

Our next task is to define the analogue of the repeated averages hierarchy. Given a $B$-tree $T$, we let $\Pi T=\{(s,t)\in T\times MAX(T): s\preceq t\}$.    For each $\xi\in \ord$, we define $p_\xi:\Pi \Gamma_\xi\to [0,1]$ such that $\sum_{\varnothing \prec t'\preceq t}p_\xi(t',t)=1$ for each $t\in MAX(\gr_\xi)$.   We let $p_0((0), (0))=1$.  Next, assume $p_\xi$ has been defined.   For $t\in MAX(\gr_{\xi+1})$, $t\in \gr_{\xi,n}$ for some $n\in \nn$ and $t=(\omega^\xi(n-1)+t_1)\cat ( \omega^\xi(n-2)+t_2)\cat \ldots \cat t_n$ for some $t_1, \ldots, t_n\in MAX(\gr_\xi)$.    If $s\preceq t$, then for some $1 \leqslant m\leqslant  n$ and $s'\preceq t_m$, $$s=(\omega^\xi(n-1)+t_1)\cat \ldots \cat (\omega^\xi(n-m+1)+t_{m-1})\cat (\omega^\xi(n-m+1)+s'),$$ where the first $m-1$ sequences in the concatenation are understood to be omitted if $m=1$.   In this case, we define $$p_{\xi+1}(s,t)=\frac{1}{n}p_\xi(s', t_m).$$ With this definition, if $t=(\omega^\xi(n-1)+t_1)\cat \ldots \cat t_n\in MAX(\Gamma_{\xi, n})$ and if $s_i=(\omega^\xi(n-1)+t_1)\cat \ldots \cat (\omega^\xi (n-i)+t_i)$ and $s_0=\varnothing$, $$\sum_{\varnothing \prec s\preceq t} p_{\xi+1}(s, t)= \sum_{i=1}^n \sum_{s_{i-1} \prec s\preceq s_i} p_{\xi+1}(s, t) = \frac{1}{n}\sum_{i=1}^n \sum_{\varnothing \prec t'\preceq t_i} p_\xi(t', t_i)=1.$$    Last, assume $p_\zeta$ has been defined for  every $\zeta<\xi$, where $\xi$ is a limit ordinal.  If $(s,t)\in \Pi \Gamma_\xi$, then there exists a unique $\zeta<\xi$ and a unique $(s',t')\in \Pi\Gamma_{\zeta+1}$ such that $s=\omega^\zeta+s'$ and $t=\omega^\zeta+t'$.   Then we let $p_\xi(s,t)=p_{\zeta+1}(s',t')$. It follows by a trivial proof by induction that for any $\xi\in \ord$, if $s\in \gr_\xi$ and if $t, t'\in MAX(\gr_\xi)$ are any two maximal extensions of $s$, $p_\xi(s,t)=p_\xi(s, t')$.   We may therefore unambiguously define $\pr_\xi:\gr_\xi\to  [0,1]$ by $\pr_\xi(s)=p_\xi(s,t)$, where $t$ is a maximal extension of $s$ in $\Gamma_\xi$.  Note that since $\Gamma_\xi$ is well-founded, every $s\in \Gamma_\xi$ admits a maximal extension.  Note that for any $\xi\in \ord$ and any $n\in \nn$, if $t\in \gr_{\xi,n}$, $n \pr_{\xi+1}(t)=\pr_\xi(\iota_{\xi, n}(t))$.

In the remainder of this section,  $D$ will be a directed set. For any $B$-tree $T$ on a set $\Lambda$, we let $TD=\{(t, \sigma)\in (\Lambda\times D)^{<\nn}: t\in T\}$. That is, $TD=\{(t, (U_1, \ldots, U_n))\in (\Lambda\times D)^{<\nn}: U_i\in D, t\in T\}$.   Of course this is a $B$-tree, and one easily verifies by induction that for any ordinal $\xi$, $(TD)^\xi=T^\xi D$, so that $TD$ and $T$ have the same order. We let $\Omega_\xi=\Gamma_\xi D$ and $\Omega_{\xi, n}=\Gamma_{\xi, n}D$.   We define $\pr_\xi$ on $\Omega_\xi D$ by letting $\pr_\xi(t, \sigma)=\pr_\xi(t)$.  For $\xi\in \ord$ and $n\in \nn$, we define the function $\lambda_{\xi,n}:\Omega_{\xi, n}\to \{1, \ldots n\}$ by $\lambda_{\xi, n}(t, \sigma)=\lambda_{\xi, n}(t)$, where $\lambda_{\xi, n}$ defined on $\Gamma_{\xi, n}$ is as given following Proposition \ref{longprop}.  We let $\Lambda_{\xi,n,m}=\{\tau\in \gr_{\xi,n}: \lambda_{\xi, n}(\tau)=m\}$, $\Sigma_{\xi, n, m}=MAX(\Lambda_{\xi, n, m})$, and $\Sigma_{\xi, n}=\cup_{m=1}^n\Sigma_{\xi, n, m}$. Note that $\tau\in \Sigma_{\xi,n}$ if and only if $\iota_{\xi, n}(\tau)\in MAX(\Omega_\xi)$.    

If $(t, \sigma)\in \Omega_{\xi, n}$, and if $t=(\omega^\xi(n-1)+t_1)\cat\ldots \cat (\omega^\xi (n-m)+t_m)$ is the representation given in Proposition \ref{longprop}$(vi)$, we define $\iota_{\xi, n}(t, \sigma)=(t_m, \sigma')$, where $\sigma'$ is the tail of $\sigma$ which has the same length as $t_m$. We also define $\pi_{\xi, n}(t, \sigma)= ((\omega^\xi(n-1)+t_1)\cat\ldots \cat (\omega^\xi(m-2)+t_{m-1}), \sigma'')$, where $\sigma''$ is the initial segment of $\sigma$ having the same length as $t_1\cat\ldots \cat t_{m-1}$. We agree that in the case $m=1$, $\pi_{\xi,n}(t, \sigma)=\varnothing$.  The purpose of $\pi_{\xi,n}$ is to take $t$ on a certain level $m$ of $\Omega_{\xi,n}$ and return the maximal sequence on the $m-1^{st}$ level which $t$ extends (or if $t$ is on the first level, to return the empty sequence).  We let $U_\tau=\{\tau'\in \Omega_{\xi, n}: \pi_{\xi, n}(\tau)=\pi_{\xi, n}(\tau')\}$, and refer to this set as the \emph{unit containing} $\tau$. A set $U\subset \Omega_{\xi, n}$ will be called a \emph{unit} if $U=U_\tau$ for some $\tau$. It is clear that any unit $U$ of $\Omega_{\xi, n}$ is $U$ canonically identified with $\Omega_\xi$ via the map $\iota_{\xi, n}:U\to \Omega_\xi$.  This canonical identification is monotone, preserves immediate predecessors, and $\mathbb{P}_\xi(\iota_{\xi,n}(\tau))=n\mathbb{P}_{\xi+1}(\tau)$ for any $\tau\in U$.   We will implicitly make use of these facts throughout.  In particular, if $g:U\to \rr$ is a function and if $g':\Omega_\xi\to \rr$ is given by $g'(\iota_{\xi,n}(\tau))=g(\tau)$, $$\frac{1}{n}\sum_{\varnothing \prec \tau' \preceq \iota_{\xi,n}(\tau)} \mathbb{P}_\xi(\tau')g'(\tau')= \sum_{\pi_{\xi,n}(\tau)\prec \tau'\preceq \tau} \mathbb{P}_{\xi+1}(\tau')g(\tau')$$ for any $\tau\in U$.  This relationship between  averages is vital and implicitly used in the final section.

 If $T_1, T_2$ are any $B$-trees, and $S_1\subset T_1D$, $S_2\subset T_2D$, we say a function $\theta:S_1 \to S_2$ is a \emph{pruning} provided it is monotone and for every $\tau_1=(t_1, (U_1, \ldots, U_n))\in S_1$, $ U_n\leqslant_D V_m$, where  $\theta(\tau_1)=(t_1, (V_1, \ldots, V_m))$.     We say a pair $(\theta, e)$ is an \emph{extended pruning} if $\theta:S_1\to S_2$ is a pruning and if $e:MAX(S_1)\to MAX(S_2)$ is such that for any $\tau\in MAX(S_1)$, $\theta(\tau)\preceq e(\tau)$. We abuse notation and write $(\theta, e):S_1\to S_2$ rather than $(\theta, e):S_1\times MAX(S_1)\to S_2\times MAX(S_2)$.  For $\xi\in \ord$ and $m,n\in \nn$, we say a pair $(\theta, f)$ is a \emph{level pruning} provided that $\theta:\Omega_{\xi, m}\to \Omega_{\xi, n}$ is a pruning and $f:\Sigma_{\xi, m}\to \Sigma_{\xi, n}$ is such that \begin{enumerate}[(i)]\item for any $\tau\in \Omega_{\xi, m}$, $\theta(U_\tau)\subset U_{\theta(\tau)}$, \item  if $\tau'\in \Omega_{\xi, m}$ is such that $\tau\preceq \tau'\in MAX(U_\tau)$, $f(\tau')\in MAX(U_{\theta(\tau)})$ and $\theta(\tau)\preceq f(\tau')$, \item if $\tau'\in \Sigma_{\xi, m, i}$ for some $i<m$ and if $\tau$ is an extension of $\tau'$ with $\lambda_{\xi, m}(\tau)>i$, $f(\tau')\prec \theta(\tau)$. \end{enumerate}  This implies that for any unit $U$ of $\Omega_{\xi, m}$, $(\theta|_U, f|_{MAX(U)})$ is an extended pruning into a single unit of $\Omega_{\xi,n}$.  Note that if $m=n$ and if $(\theta, f):\Omega_{\xi, m}\to \Omega_{\xi, n}$ is any level pruning, $f$ maps $MAX(\Omega_{\xi, n})$ into itself.      Note that the composition of prunings is a pruning, and the analogous statement holds for  extended prunings and level prunings. 
 
 Note that if $T_1, T_2$ are $B$-trees and $\phi:T_1\to T_2$ is any monotone function such that $|\phi(t)|=|t|$ for all $t\in T_1$, then $\phi':T_1D\to T_2D$ given by $\phi'(t, \sigma)=(\phi(t), \sigma)$ defines a pruning.

 For any $\xi\in \ord$ and $m\leqslant n$, $m,n\in \nn$, $\Omega_{\xi, m}$ is naturally identifiable with several subsets of $\Omega_{\xi, n}$. We discuss these identifications here, since they will be used many times throughout.   The most obvious is the identification of $\Omega_{\xi, m}$ with the first $m$ levels of $\Omega_{\xi, n}$,  $\cup_{i=1}^m \Lambda_{\xi, n, i}$, given by $$\bigl((\omega^\xi(m-1)+t_1)\cat\ldots \cat (\omega^\xi(m-i)+t_i), \sigma\bigr)\mapsto \bigl((\omega^\xi(n-1)+t_1)\cat \ldots \cat (\omega^\xi (n-i)+t_i), \sigma\bigr).$$  Note that this function is a pruning, the pair consisting of this function and its restriction to $MAX(\Omega_{\xi, n})$ is an extended pruning, and the pair consisting of this function and its restriction to $\Sigma_{\xi, m}$ is a level pruning. In the sequel, we will refer to this identification as the natural identification of $\Omega_{\xi, m}$ with the first $m$ levels of $\Omega_{\xi, n}$.    Moreover, if $\tau\in \Sigma_{\xi, n, n- m}$, $\Omega_{\xi, m}$ is naturally identifiable with $A_\tau=\{\tau'\in \Omega_{\xi, n}: \tau\prec \tau'\}$ via the function $\Omega_{\xi, m}\to A_\tau$ given by $\tau'\mapsto \tau\cat \tau'$.    Note that this function is a pruning, the pair consisting of this function and its restriction to $MAX(\Omega_{\xi, m})$ is an extended pruning, and the pair consisting of this function and its restriction to $\Sigma_{\xi, m}$ is a level pruning.  We will refer to this as the natural identification of $\Omega_{\xi, m}$ with $A_\tau$.   Moreover, for any other $\tau_1\in \Sigma_{\xi, n,n-m}$, $A_\tau$ and $A_{\tau_1}$ are naturally identifiable, since each is naturally identifiable with $\Omega_{\xi, m}$.   We also note that $o(\Gamma_{\xi, m})=\omega^\xi m\leqslant o(\Gamma_{\zeta, n})$ if either $\xi<\zeta$ or $\xi=\zeta$ and $m\leqslant n$, and in this case there exists a monotone function $\phi:\Gamma_{\xi, m}\to \Gamma_{\zeta, n}$ such that $|\phi(t)|=|t|$ for all $t\in \Gamma_{\xi, m}$.  Then $\theta(t, \sigma)=(\phi(t), \sigma)$ is a pruning.  Therefore if $\zeta<\xi$ or if $\zeta=\xi$ and $m\leqslant n$, then there exists a pruning $\theta:\Omega_{\zeta, m}\to \Omega_{\xi, n}$.   Since $\Omega_{\xi, n}$ is well-founded, there exists $e:MAX(\Omega_{\zeta, m})\to MAX(\Omega_{\xi, n})$ such that $(\theta, e):\Omega_{\zeta, m}\to \Omega_{\xi, n}$ is an extended pruning. Indeed, for any $\tau\in MAX(\Omega_{\zeta, m})$, we may let $e(\tau)$ be any maximal extension of $\theta(\tau)$, at least one of which exists by well-foundedness.

For an ordinal $\xi$, $\ee\in \rr$, and a function $g:\Pi\Omega_\xi \to \rr$, we  say $g$ is \begin{enumerate}[(i)]\item $\ee$-\emph{small} if there exists an extended pruning $(\theta, e):\Omega_\xi\to \Omega_\xi$ such that for every $\tau\in MAX(\Omega_\xi)$, $\sum_{\varnothing\prec \tau'\preceq e(\tau)} \mathbb{P}_\xi(\tau')g(\tau', e(\tau))<\ee$, \item $\ee$-\emph{large} if for every $\delta>0$, there exists an extended pruning $(\theta, e):\Omega_\xi \to \Omega_\xi $ so that for every $(\tau', \tau)\in \Pi \Omega_\xi$, $g(\theta(\tau'), e(\tau))\geqslant \ee-\delta$.  \end{enumerate}

\begin{remark} Note that these alternatives are not exclusive.  Indeed, for any $\xi>0$, there exists a function $g:\Pi\Omega_\xi\to [-1,1]$ which is $1$-large and $\ee$-small for any $\ee>-1$.  One may define $g:\Pi\Omega_{\xi+1}\to [-1,1]$ by letting $f|_{\Pi \Omega_{\xi, n}}=(-1)^n$, which is both $1$-large and $\ee$-small for every $\ee>-1$.    If $\xi$ is a limit ordinal and there exists a function $f_\zeta:\Pi\Omega_{\zeta+1}\to [-1,1]$ which is $1$-large and $\ee$-small for every $\ee>-1$, we may define such a function $f:\Pi\Omega_\xi\to [-1,1]$ by identifying $(\omega^\zeta+\Gamma_{\zeta+1})D$ with $\Omega_{\zeta+1}$ and using this identification and $f_\zeta$ to define $f$ on $\Pi (\omega^\zeta+\Gamma_{\zeta+1})D$.

\end{remark}

\begin{theorem} For any ordinal $\xi$, any function $g:\Pi \Omega_\xi \to [-1,1]$,  and any $\ee\in \rr$, $g$ is either $\ee$-large or $\ee$-small.

\label{ultimatelemma}

\end{theorem}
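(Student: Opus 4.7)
The plan is to proceed by transfinite induction on $\xi$, with the successor step carrying all the combinatorial weight.

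For the base case $\xi=0$, $\Omega_0$ is a single-level tree indexed by $D$, and $\Pi\Omega_0$ consists of pairs $(\tau,\tau)$ with $\tau$ a length-one sequence. Both horns of the dichotomy reduce to statements about the cofinal behavior of $g$ on $D$: if no cofinal subset of $D$ has $g<\ee$, then on a cofinal tail $g\geqslant\ee$, and $\ee$-largeness follows; otherwise $\ee$-smallness holds directly. For the limit case, Proposition \ref{longprop}$(iii)$ decomposes $\Omega_\xi$ as the totally incomparable union of the pieces $(\omega^\zeta+\Gamma_{\zeta+1})D$, each canonically identifiable with $\Omega_{\zeta+1}$. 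Every maximal branch of $\Omega_\xi$ lives in a single piece, so both horns localize; the inductive hypothesis gives a dichotomy on each piece, and piecewise prunings glue across incomparable pieces to yield a dichotomy on $\Omega_\xi$.

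For the successor step $\xi\to\xi+1$, decompose $\Omega_{\xi+1}=\bigcup_{n\in\nn}\Omega_{\xi,n}$. A maximal element $\tau$ of $\Omega_{\xi,n}$ is the concatenation of $n$ maximal branches $\sigma_1,\ldots,\sigma_n$ of $n$ consecutive units, each unit canonically identified with $\Omega_\xi$, and the averaging identity recorded after Proposition \ref{longprop} writes the $\mathbb{P}_{\xi+1}$-average along $\tau$ as the arithmetic mean of the $n$ $\mathbb{P}_\xi$-averages along the $\sigma_i$:
\[ \sum_{\varnothing\prec\tau'\preceq\tau}\mathbb{P}_{\xi+1}(\tau')g(\tau',\tau)=\frac{1}{n}\sum_{i=1}^{n}\sum_{\varnothing\prec\sigma\preceq\sigma_i}\mathbb{P}_\xi(\sigma)g_i(\sigma,\sigma_i). \]
For each unit (indexed by its ancestor path) the inductive hypothesis dichotomizes the pullback of $g$. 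One then uniformizes this choice across the infinitely many units in $\Omega_{\xi,n}$ by constructing a single level pruning: process units top-down, applying the IH witness on each unit and cofinally selecting in $D$ to reconcile those witnesses with the ones chosen at higher levels. This yields, for each $n$, a level pruning making either the weighted sum strictly less than $\ee$ on every maximal branch of $\Omega_{\xi,n}$ or the pointwise bound $g\geqslant\ee-\delta$ on $\Omega_{\xi,n}$. The outcomes across $n$ are finally combined into a global witness on $\Omega_{\xi+1}$, using the total incomparability of the $\Omega_{\xi,n}$ from Proposition \ref{longprop}$(v)$: if the small alternative holds uniformly in $n$ the glued pruning is a global $\ee$-small witness, while if the large alternative holds on arbitrarily large $n$ one extracts a global $\ee$-large witness by diagonalization.

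The chief obstacle is the uniformization across the infinitely many units inside a single $\Omega_{\xi,n}$: each unit supplies its own inductive witness, and these must be merged into a coherent level pruning. This is where the $D$-factor in the definition of $\Omega_\xi$ earns its keep---at each stage the witnesses are reconciled by passing to a common upper bound in $D$, which serves as the non-separable analogue of the ``pass to a further subsequence'' step used in the Schreier-family arguments of \cite{Schlumprecht} and \cite{AJO}. The cleanest implementation is to prove a strengthened form of Theorem \ref{ultimatelemma} in which the extended pruning on each horn is required to be a level pruning, so that the coordination at each successor stage is already built into the inductive statement and the induction closes without auxiliary maneuvering.
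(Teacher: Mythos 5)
Your base and limit cases match the paper's proof. The gap is in the successor step, at the point where you claim that the unit-by-unit dichotomy inside a fixed $\Omega_{\xi,n}$ can be ``uniformized'' into a single level pruning on which \emph{either} every maximal branch of $\Omega_{\xi,n}$ has weighted sum $<\ee$ \emph{or} $g\geqslant \ee-\delta$ pointwise on all of $\Omega_{\xi,n}$. That uniformization is not available: which horn of the dichotomy a given unit satisfies is determined by the restriction of $g$ to that unit, and no amount of cofinal selection in $D$ or further pruning converts a unit on which $g$ is $\ee$-small into one on which $g$ is $\ee$-large. In the mixed case --- say the unit at level $1$ is large-type and the units at level $2$ are small-type, with $n=2$ --- the branch average $\frac{1}{n}\sum_{j}(\text{unit averages})$ can land on either side of $\ee$, so neither of your two per-$n$ alternatives follows, and your subsequent case split over $n$ never gets off the ground.

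What the paper does instead is accept that the units split into large-type and small-type, and make this split quantitative. Lemma \ref{easy2} produces a level pruning under which each unit satisfies one of the two alternatives; the large/small classification of units is a \emph{unital} partition of $\Sigma_{\xi,n}$, and Lemma \ref{easy3} extracts from it two level prunings $\Omega_{\xi,p}\to\Omega_{\xi,n}$ and $\Omega_{\xi,q}\to\Omega_{\xi,n}$ with $p+q=n$ landing in the large-type and small-type units respectively (this is Lemma \ref{jobdone}). The successor step then runs a density argument: fix $\rho\in(0,1)$ with $1-\rho+\rho(\ee-\delta/2)<\ee$; if $q_n>\rho n$ for infinitely many $n$, then along the pruned branches the $q_n$ small units each contribute $<\ee-\delta/2$ and the remaining units contribute at most $1$, so the branch average is $<\rho(\ee-\delta/2)+(1-\rho)<\ee$ and $g$ is $\ee$-small; otherwise $p_n\geqslant(1-\rho)n$ eventually, and one diagonalizes extended prunings $\Omega_{\xi,i}\to\Omega_{\xi,p_{n_i}}$ into the large parts to witness $\ee$-largeness. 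This convexity/counting step is the essential idea missing from your outline, and your proposed strengthening of the inductive statement (requiring level prunings on each horn) does not supply it. (Your instinct to stabilize finite data along maximal branches by a Ramsey-type argument does appear in the paper, as Lemma \ref{easylemma}, but it is used inside Lemma \ref{easy2} to stabilize the values $g(\tau_i,\cdot)$ up to $\delta/2$, not to force all units onto the same horn.)
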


The assumption that $g$ maps into $[-1,1]$ in Theorem \ref{ultimatelemma} is purely a matter of convenience.  It is clear how to deduce from Theorem \ref{ultimatelemma} the following generalization.    

\begin{corollary} For any ordinal $\xi$, if $g:\Pi\Omega_\xi\to \rr$ is any bounded function and if $\ee\in \rr$, then $g$ is either $\ee$-large or $\ee$-small.   
\label{conv}
\end{corollary}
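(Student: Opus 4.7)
The plan is to reduce Corollary \ref{conv} to Theorem \ref{ultimatelemma} by a straightforward rescaling argument, exploiting the fact that the defining conditions of $\ee$-large and $\ee$-small behave linearly in both $g$ and $\ee$. The only mild care needed is in the $\ee$-large case, where the universal quantification over $\delta>0$ must be handled.

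First I would dispose of the trivial case $g\equiv 0$ directly: if $\ee>0$ then the identity extended pruning witnesses that $g$ is $\ee$-small, while if $\ee\leqslant 0$ it is $\ee$-large. Otherwise, fix any $M>0$ with $|g(\tau',\tau)|\leqslant M$ for every $(\tau',\tau)\in \Pi\Omega_\xi$, and set $h=g/M:\Pi\Omega_\xi\to [-1,1]$. Apply Theorem \ref{ultimatelemma} to the function $h$ with threshold $\ee/M$; then $h$ is either $(\ee/M)$-large or $(\ee/M)$-small.

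In the small case, there exists an extended pruning $(\theta,e):\Omega_\xi\to \Omega_\xi$ such that for every $\tau\in MAX(\Omega_\xi)$,
\[ \sum_{\varnothing\prec \tau'\preceq e(\tau)}\mathbb{P}_\xi(\tau')\, h(\tau',e(\tau))<\ee/M. \]
Multiplying through by $M$ (which is positive, so the inequality is preserved) yields exactly the condition that $g$ is $\ee$-small, using the same extended pruning $(\theta,e)$. In the large case, given any $\delta>0$, apply the $(\ee/M)$-largeness of $h$ to the auxiliary parameter $\delta/M>0$ to obtain an extended pruning $(\theta,e)$ with $h(\theta(\tau'),e(\tau))\geqslant \ee/M-\delta/M$ for every $(\tau',\tau)\in \Pi\Omega_\xi$; multiplying by $M$ gives $g(\theta(\tau'),e(\tau))\geqslant \ee-\delta$, as required. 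Since $\delta>0$ was arbitrary, $g$ is $\ee$-large.

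There is essentially no obstacle here: the verification reduces to observing that both definitions — the inequality $\sum \mathbb{P}_\xi(\tau')g(\tau',e(\tau))<\ee$ and the inequality $g(\theta(\tau'),e(\tau))\geqslant \ee-\delta$ — are homogeneous of degree one in the pair $(g,\ee)$, and that the $\delta>0$ in the definition of $\ee$-large can be freely rescaled by any positive factor without changing the quantifier's content.
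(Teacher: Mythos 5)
Your rescaling argument is correct and is exactly the deduction the paper has in mind (the paper omits the proof entirely, stating only that it is ``clear how to deduce'' the corollary from Theorem \ref{ultimatelemma}); the key observation that both defining inequalities are homogeneous of degree one in $(g,\ee)$, and that the $\delta$ in the definition of $\ee$-large rescales harmlessly, is precisely right. The separate treatment of $g\equiv 0$ is unnecessary, since any $M>0$ bounding $|g|$ works there as well, but it does no harm.
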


\begin{corollary} For any ordinal $\xi$, if $g:\Pi\Omega_\xi \to \rr$ is a bounded function and $\ee\in \rr$ is such that for every $\tau\in MAX(\Omega_\xi)$, $\sum_{\varnothing\prec \tau'\preceq \tau} \mathbb{P}_\xi(\tau')g(\tau', \tau)\geqslant \ee$, then $g$ is $\ee$-large.  

\label{maincorollary}

\end{corollary}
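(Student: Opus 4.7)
The plan is to deduce this as an immediate consequence of the dichotomy in Corollary \ref{conv} by ruling out the $\ee$-small alternative.

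First I would invoke Corollary \ref{conv}: since $g:\Pi\Omega_\xi\to\rr$ is bounded and $\ee\in\rr$, either $g$ is $\ee$-large or $g$ is $\ee$-small. Thus it suffices to show that under the hypothesis, $g$ cannot be $\ee$-small.

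Suppose for contradiction that $g$ is $\ee$-small. By definition, there exists an extended pruning $(\theta,e):\Omega_\xi\to\Omega_\xi$ with the property that for every $\tau\in MAX(\Omega_\xi)$,
\[
\sum_{\varnothing\prec\tau'\preceq e(\tau)}\mathbb{P}_\xi(\tau')\,g(\tau',e(\tau))<\ee.
\]
The key observation is that $e(\tau)\in MAX(\Omega_\xi)$ itself, so the hypothesis of the corollary applies with $e(\tau)$ playing the role of $\tau$, yielding
\[
\sum_{\varnothing\prec\tau'\preceq e(\tau)}\mathbb{P}_\xi(\tau')\,g(\tau',e(\tau))\geqslant\ee.
\]
These two inequalities are incompatible, giving the required contradiction. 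Hence $g$ is $\ee$-large, as claimed.

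There is no real obstacle here: the argument is essentially a one-line deduction from the dichotomy, relying only on the trivial fact that $e$ takes values in $MAX(\Omega_\xi)$, which is built into the definition of an extended pruning. All the substantive combinatorial work has already been absorbed into Theorem \ref{ultimatelemma} and Corollary \ref{conv}.
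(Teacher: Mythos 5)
Your proof is correct and is exactly the paper's argument: the paper disposes of this corollary in one line by noting that the hypothesis forbids $g$ from being $\ee$-small, and then applies the dichotomy. Your write-up merely makes explicit the (correct) observation that $e(\tau)\in MAX(\Omega_\xi)$, so nothing further is needed.
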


Corollary \ref{maincorollary} follows from the fact that no such function $g$ could be $\ee$-small.

The proof of Theorem \ref{ultimatelemma}, of course, will be by induction.  The only case which will require some work is the successor case.  For that, we need the following technical piece.  

\begin{lemma} Suppose that $\xi\in \emph{\ord}$ is an ordinal such that for every $g:\Pi \Omega_\xi \to [-1,1]$ and any $\ee\in \rr$, $g$ is either $\ee$-large or $\ee$-small.  For any $n\in \nn$, $\ee\in \rr$, $\delta>0$, and $g:\Pi \Omega_{\xi, n}\to [-1,1]$,  there exist $p,q\in \nn_0$ such that $p+q=n$,  an extended pruning $(\theta, e):\Omega_{\xi, p}\to \Omega_{\xi, n}$, a level pruning $(\phi, f):\Omega_{\xi, q}\to \Omega_{\xi, n}$ and a function $e':MAX(\Omega_{\xi, q})\to MAX(\Omega_{\xi, n})$ such that \begin{enumerate}[(i)] \item for every $(\tau', \tau)\in \Pi \Omega_{\xi, p}$, $g(\theta(\tau'), e(\tau))\geqslant \ee-\delta$, \item for every $\tau\in MAX(\Omega_{\xi, q})$, $f(\tau)\preceq e'(\tau)$,  \item for every $\tau\in \Sigma_{\xi, q}$, $$\sum_{\pi_{\xi, n}(f(\tau))\prec \tau'\preceq  f(\tau)} \mathbb{P}_\xi(\iota_{\xi, n}(\tau'))g(\tau', e'(\tau''))< \ee$$ whenever $\tau''\in MAX(\Omega_{\xi, q})$ is a  maximal extension of $\tau$. \end{enumerate} Here, if $p=0$ (resp.  $q=0$), we take $\Omega_{\xi, 0}=\varnothing$ and agree that the conclusion on $(\theta, e)$ (resp. $(\phi, f)$) is vacuously satisfied.

\label{jobdone}
\end{lemma}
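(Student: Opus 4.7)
I prove the lemma by induction on $n \in \nn$. The base case $n = 1$ is immediate from the hypothesis on $\xi$: since $\Omega_{\xi, 1} = \Omega_\xi$, the function $g$ is either $\ee$-large, in which case take $(p, q) = (1, 0)$, or $\ee$-small, in which case take $(p, q) = (0, 1)$. In the second alternative, an extended pruning on the single unit $\Omega_{\xi, 1}$ automatically serves as a level pruning, and setting $e' = e$ yields conditions (ii) and (iii).

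For the inductive step $n \to n+1$, I peel off the first unit. The first unit $U_1$ of $\Omega_{\xi, n+1}$ (the elements on level $1$) is canonically identified with $\Omega_\xi$. For each $\sigma \in \Sigma_{\xi, n+1, 1}$, the set $A_\sigma = \{\tau' \in \Omega_{\xi, n+1} : \sigma \prec \tau'\}$ is naturally identified with $\Omega_{\xi, n}$. For each $\sigma$ the restriction of $g$ to pairs inside $A_\sigma$ defines $g_\sigma \colon \Pi A_\sigma \to [-1, 1]$; applying the inductive hypothesis to $g_\sigma$ with threshold $\ee$ and slack $\delta/2$ yields a splitting $n = p_\sigma + q_\sigma$ with witnessing prunings $(\theta_\sigma, e_\sigma)$, $(\phi_\sigma, f_\sigma)$, and $e'_\sigma$.

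The crux is that $(p_\sigma, q_\sigma)$ may depend on $\sigma$, and I must force uniformity. For each $k \in \{0, 1, \ldots, n\}$, define $h^{(k)} \colon \Pi\Omega_\xi \to [-1, 1]$ (via the identification $U_1 \cong \Omega_\xi$) by $h^{(k)}(\tau', \sigma) = 1$ if $p_\sigma = k$ and $0$ otherwise. Apply the $\xi$-hypothesis to $h^{(0)}, h^{(1)}, \ldots, h^{(n)}$ successively: at each step either extract an extended pruning making the current $h^{(k)}$ pointwise close to $1$ on its image (and stop with $k_0 = k$), or extract one making its weighted averages strictly less than $\tfrac{1}{n+2}$ (and pass to the next $k$ on the narrower pruning). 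The iteration must terminate with some $k_0$, because on any pruned sub-copy one has $\sum_k \sum_{\tau' \preceq \tau} \pr_\xi(\tau') h^{(k)}(\tau', \tau) = 1$ for every maximal $\tau$, so not all $n+1$ of the $h^{(k)}$ can have average below $\tfrac{1}{n+2}$ simultaneously. The outcome is an extended pruning $(\mu, \nu) \colon \Omega_\xi \to U_1$ on which $p_{\nu(\sigma)} \equiv k_0$.

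With the splitting uniformized, apply the $\xi$-hypothesis one last time to the function on $\Pi\Omega_\xi$ obtained by restricting $g$ along $\mu$ and along canonical maximal continuations inside the $A_{\nu(\sigma)}$'s (chosen from the extensions $e'_{\nu(\sigma)}$ supplied by the inductive step). If this restriction is $\ee$-large, set $(p, q) = (k_0 + 1, n - k_0)$ and prepend the $U_1$-extended-pruning to each $(\theta_{\nu(\sigma)}, e_{\nu(\sigma)})$ to obtain the final $(\theta, e)$, while the final $(\phi, f, e')$ is inherited from the $(\phi_{\nu(\sigma)}, f_{\nu(\sigma)}, e'_{\nu(\sigma)})$. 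If $\ee$-small, set $(p, q) = (k_0, n + 1 - k_0)$ and prepend the $U_1$-witness to each $(\phi_{\nu(\sigma)}, f_{\nu(\sigma)})$ as a new top layer of the level pruning, while $(\theta, e)$ is inherited from $(\theta_{\nu(\sigma)}, e_{\nu(\sigma)})$. The main obstacle, which absorbs most of the bookkeeping, is verifying that the composed maps actually meet the precise definitions of extended pruning and level pruning, in particular verifying condition (iii) on the new top-level unit in the $\ee$-small subcase, and that the extension $e'$ is threaded consistently through the large and small layers so that $f \preceq e'$ and condition (iii) holds for every $\tau'' \in MAX(\Omega_{\xi, q})$. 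The slacks $\delta/2$ at each of the two levels combine to the required total $\delta$.
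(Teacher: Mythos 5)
Your overall architecture — induct on $n$, peel off the first unit, apply the $\xi$-dichotomy to the first unit, and combine with the inductive data on the subtrees $A_\sigma$ — is close in spirit to the paper's, which factors the same content through three auxiliary lemmas (a stabilization lemma, a unit-by-unit large/small dichotomy, and a combinatorial splitting of a unital partition into $p$ and $q$ levels). Your uniformization of $(p_\sigma,q_\sigma)$ via the dichotomy applied to the indicator functions $h^{(k)}$ is workable (since $h^{(k)}(\tau',\sigma)$ depends only on $\sigma$, the averages are $0$ or $1$ and the pigeonhole closes), though the paper avoids it entirely by taking $p=1+\min_\sigma p_\sigma$ and embedding $\Omega_{\xi,p-1}$ into the first $p-1$ levels of each $\Omega_{\xi,p_\sigma}$.

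However, there is a genuine gap at the final step, and it is not bookkeeping. You define the auxiliary function on $\Pi\Omega_\xi$ by evaluating $g(\mu(\tau'),\,\cdot\,)$ at one ``canonical maximal continuation'' per first-level branch $\nu(\sigma)$. But the conclusions you need concern \emph{all} the maximal nodes that the final maps actually produce: in the large case, condition (i) requires $g(\theta(\tau'),e(\tau))\geqslant\ee-\delta$ where, for a first-level $\tau'$, $e(\tau)$ ranges over the many maximal extensions of $\nu(\sigma)$ reached through $e_{\nu(\sigma)}$; in the small case, condition (iii) requires $\sum_{\tau'\preceq f(\tau)}\mathbb{P}_\xi(\iota_{\xi,n+1}(\tau'))g(\tau',e'(\tau''))<\ee$ for \emph{every} maximal extension $\tau''$. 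A single canonical choice controls neither, since $g(\tau_0,T)$ can vary by the full diameter $2$ as $T$ ranges over maximal extensions, and no choice of $\inf$ or $\sup$ in the definition of the auxiliary function serves both alternatives of the dichotomy simultaneously. The paper's resolution is a separate stabilization step: partition $[-1,1]^r$ (where $r$ is the number of initial segments of the first-level node) into sets of diameter $\leqslant\delta/2$ and use the stabilization lemma on each subtree $B_\tau$ so that the tuple $(g(\tau_i,\cdot))_{i=1}^r$ is $\delta/2$-constant over the surviving maximal extensions; only then is the auxiliary function $G(\tau_0,\tau)=\inf\{g(\tau_0,\cdot)\}$ defined and the dichotomy applied, with the $\delta/2$ reabsorbed. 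Without this stabilization idea your argument does not close, and it is precisely the main content of the paper's Lemma \ref{easy2}.
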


We will relegate the somewhat technical proof of Lemma \ref{jobdone} to the final section.

\begin{proof}[Proof of Theorem \ref{ultimatelemma}]The proof is by induction.   The base case follows from the fact that the function $g$ can be identified with the net $(g((0), (U)))_{U\in D}$.  Then there exists a subnet of this net which is either always in $[\ee, \infty)$ or always in $(-\infty, \ee)$.  This means that for every $U\in D$, we may select $V_U\in D$ with $U\leqslant_D V_U$ such that the pair $(\theta, e):\Omega_0\to \Omega_0$ given by $\theta((0), (U))=e((0), (U))=((0), (V_U))$ is an extended pruning which witnesses that $g$ is $\ee$-large provided that the subnet is always in $[\ee, \infty)$ and which witnesses that $g$ is $\ee$-small if the subnet is always in $(-\infty, \ee)$.   

Next, assume $\xi$ is a limit ordinal and the result holds for every $\zeta<\xi$. Fix a function $g:\Pi\Omega_\xi\to [-1,1]$.  Let $\Theta_{\zeta+1}=(\omega^\zeta+\Gamma_{\zeta+1})D$ for each $\zeta<\xi$.  Recall that $\Gamma_\xi=\cup_{\zeta<\xi}(\omega^\zeta + \Gamma_{\zeta+1})$, and therefore $\Omega_\xi=\cup_{\zeta<\xi} \Theta_{\zeta+1}$.  Recall that $\Theta_{\zeta+1}$ can be naturally identified with $\Omega_{\zeta+1}$ by $j_\zeta:\Omega_{\zeta+1}\to \Theta_{\zeta+1}$ given by $j_\zeta(t, \sigma)=(\omega^\zeta+t, \sigma)$. Moreover, with this identification, $\mathbb{P}_{\zeta+1}(\tau)=\mathbb{P}_\xi(j_\zeta(\tau))$ for every $\tau\in \Omega_{\zeta+1}$.   Using this identification, we may treat $g|_{\Pi\Theta_{\zeta+1}}$ as a function on $\Pi\Omega_{\zeta+1}$. Using the inductive hypothesis, this function is either $\ee$-large or $\ee$-small.  Let $\Sigma$ denote the set of those $\zeta<\xi$ such that the function $g|_{\Pi\Theta_{\zeta+1}}$ is $\ee$-large.  Then either $\sup \Sigma=\xi$ or $\sup [0, \xi)\setminus \Sigma=\xi$.  We will show that in the first case, $g$ is $\ee$-large, and otherwise $g$ is $\ee$-small.  Suppose $\sup \Sigma=\xi$ and fix $\delta>0$.   For each $\zeta\in \Sigma$, we find an extended pruning $(\theta_\zeta, e_\zeta):\Theta_{\zeta+1}\to \Theta_{\zeta+1}$ such that for every $(\tau', \tau)\in \Pi\Theta_{\zeta+1}$, $g(\theta_\zeta(\tau'), e_\zeta(\tau))\geqslant \ee-\delta$.   For every $\zeta<\xi$, fix $\eta_\zeta\in \Sigma$ such that $\zeta<\eta_\zeta$ and fix an extended pruning $(\theta'_\zeta, e'_\zeta):\Theta_{\zeta+1}\to \Theta_{\eta_\zeta+1}$.    Let $\theta|_{\Theta_{\zeta+1}}=\theta_{\eta_\zeta}\circ \theta'_\zeta$ and $e|_{MAX(\Theta_{\zeta+1})}= e_{\eta_\zeta}\circ e'_\zeta$.    One easily checks that $g(\theta(\tau'), e(\tau))\geqslant \ee-\delta$ for every $(\tau', \tau)\in \Pi\Omega_\xi$, and $g$ is $\ee$-large.  Next, suppose $\sup \Sigma<\xi$.   For every $\zeta\in [0,\xi)\setminus \Sigma$, since $f|_{\Theta_{\zeta+1}}$ is $\ee$-small by the inductive hypothesis, we may find an extended pruning $(\theta_\zeta, e_\zeta):\Theta_{\zeta+1}\to \Theta_{\zeta+1}$ such that for every $\tau\in MAX(\Theta_{\zeta+1})$, $\sum_{\varnothing \prec \tau'\preceq \tau} \mathbb{P}_\xi(\tau')g(\tau', e(\tau))<\ee$.   For every $\zeta<\xi$, fix $\eta_\zeta\in [0,\xi)\setminus \Sigma$ such that $\zeta<\eta_\zeta$ and fix an extended pruning $(\theta'_\zeta, e'_\zeta):\Theta_{\zeta+1}\to \Theta_{\eta_\zeta+1}$.  Let $\theta|_{\Theta_{\zeta+1}}=\theta_{\eta_\zeta}\circ \theta'_\zeta$ and $e|_{MAX(\Theta_{\zeta+1})}=e_{\eta_\zeta}\circ e_\zeta'$.  One easily checks that $(\theta, e):\Omega_\xi\to \Omega_\xi$ witnesses that $g$ is $\ee$-small.

Last, assume the result holds for some $\xi$.  Fix $\ee\in\rr$ and $g:\Pi\Omega_{\xi+1}\to [-1,1]$. Assume that $g$ is not $\ee$-small.    Of course, if $\ee>1$, $g$ is trivially $\ee$-small, so it must be that $\ee\leqslant 1$.      Fix $\delta>0$ and choose $\rho\in (0,1)$ such that $1-\rho + \rho(\ee-\delta/2)<\ee$.   For each $n\in \nn$, apply Lemma \ref{jobdone} to $g|_{\Pi\Omega_{\xi, n}}$ to obtain $p_n, q_n$ with $p_n+q_n=n$, extended prunings $(\theta_n, e_n):\Omega_{\xi, p_n}\to \Omega_{\xi, n}$, level prunings $(\phi_n, f_n):\Omega_{\xi, q_n}\to \Omega_{\xi, n}$ and functions $e'_n:MAX(\Omega_{\xi, q_n})\to MAX(\Omega_{\xi, n})$  satisfying the conclusions of Lemma \ref{jobdone} with $\ee$ replaced by $\ee-\delta/2$ and $\delta$ replaced by $\delta/2$.   We claim that if $q_n> \rho n$ for infinitely many $n\in \nn$, then $g$ is $\ee$-small.  Suppose that $n_1<n_2<\ldots$, $n_i\in \nn$,  are such that for all $i\in \nn$, $i\leqslant \rho n_i < q_{n_i}$.   For each $i\in \nn$, fix some extended pruning $(\phi''_i, e_i''):\Omega_{\xi, i}\to \Omega_{\xi, q_{n_i}}$ and define $\phi:\Omega_{\xi+1}\to \Omega_{\xi+1}$ by letting $\phi|_{\Omega_{\xi, i}}=\phi_{n_i}\circ\phi''_i$. Define $e:MAX(\Omega_{\xi+1})\to MAX(\Omega_{\xi+1})$ by letting $e=e'_{n_i}\circ e''_i$. Fix any $\tau'\in MAX(\Omega_{\xi, i})$ and let $\tau=e(\tau')$.   Let $\tau_1, \ldots, \tau_{n_i}$ be the initial segments of $\tau_{n_i}$ such that $\tau_j\in \Sigma_{\xi, n_i, j}$ for $1\leqslant j\leqslant n_i$. Let $\tau_0=\varnothing$.  Note that for each $1\leqslant j\leqslant n_i$, $\pi_{\xi, n_i}(\tau_j)=\tau_{j-1}$.   By condition $(iii)$ of Lemma \ref{jobdone}, $$\sum_{\tau_{j-1}\prec \tau''\preceq \tau_j} \mathbb{P}_\xi(\iota_{\xi, n_i}(\tau''))g(\tau'', \tau)=\sum_{\tau_{j-1}\prec \tau''\preceq \tau_j} \mathbb{P}_\xi(\iota_{\xi, n_i}(\tau''))g(\tau'', e'_{n_i}(e''_i(\tau')))<\ee-\delta/2$$ for all $j$ such that $\tau_j\in f_{n_i}(\Sigma_{\xi, q_{n_i}})$.  Of course, there are $q_{n_i}$ such values of $j$.  Let $A\subset\{1, \ldots, n_i\}$ denote the set of $j$ such that $\tau_j\in f_{n_i}(\Sigma_{\xi, q_{n_i}})$.     Then \begin{align*} \sum_{\tau''\preceq e(\tau')} \mathbb{P}_{\xi+1}(\tau'')g(\tau'', e(\tau')) &  = \frac{1}{n_i}\sum_{j=1}^{n_i}\sum_{\tau_{j-1}\prec \tau''\preceq \tau_j} \mathbb{P}_\xi(\iota_{\xi, n_i}(\tau''))g(\tau'', e(\tau')) \\ & \leqslant \frac{|A|}{n_i}(\ee-\delta/2) + \frac{n_i-|A|}{n_i} < \rho(\ee-\delta/2) + 1-\rho< \ee.  \end{align*} Since $\tau'\in MAX(\Omega_{\xi+1})$ was arbitrary, the extended pruning $(\phi, e)$ implies that $g$ is $\ee$-small.  This contradiction shows that $q_n\leqslant  n \rho$ for all but finitely many, and therefore all sufficiently large, $n\in \nn$.  This means that $p_n\geqslant (1-\rho) n$ for all sufficiently large $n\in \nn$, and we may fix $n_1<n_2<\ldots$ such that $i< (1-\rho) n_i\leqslant p_{n_i}$ for all $i\in \nn$.  We then fix any extended prunings $(\theta''_i, e''_i):\Omega_{\xi, i}\to \Omega_{\xi,p_{n_i}}$ and define $(\theta, e):\Omega_{\xi+1}\to \Omega_{\xi+1}$ by letting $\theta|_{\Omega_{\xi, i}}= \theta_{n_i}\circ\theta''_i$ and by letting $e|_{MAX(\Omega_{\xi, i})}= e_{n_i}\circ e''_i$. It follows from the construction that for all $(\tau', \tau)\in \Pi\Omega_{\xi+1}$, $g(\theta(\tau'), e(\tau))\geqslant \ee-\delta/2-\delta/2=\ee-\delta$.  Since $\delta>0$ was arbitrary, $g$ is $\ee$-large.

\end{proof}

\section{Proof of Theorem \ref{theorem}}

Let $X$ be a Banach space.  In this section, $D$ will be a weak neighborhood basis at $0$ in $X$ directed by reverse inclusion. Let $\Omega_\xi=\Gamma_\xi D$ as defined in the previous section. Given $\hhh\subset B_X^{<\nn}$, let $(\hhh)'_w$ denote those $t\in \hhh$ such that for any $U\in D$, there exists $x\in U$ such that $t\cat (x)\in \hhh$.   We define $(\hhh)_w^0=\hhh$, $(\hhh)_w^{\xi+1}=((\hhh)_w^\xi)_w'$, and if $\xi$ is a limit ordinal, let $(\hhh)_w^\xi=\cap_{\zeta<\xi} (\hhh)_w^\zeta$. Let $o_w(\hhh)$ denote the minimum ordinal $\xi$ such that $(\hhh)_w^\xi=\varnothing$ if such an ordinal exists, and otherwise write $o_w(\hhh)=\infty$.     For $\ee>0$, let $$\hhh^K_\ee=\{\varnothing\}\cup \bigl\{(x_i)_{i=1}^n\in B_X^{<\nn}: (\exists x^*\in K)(\forall 1\leqslant i\leqslant n)(\text{Re\ }x^*(x_i)\geqslant \ee)\bigr\}.$$   We will use the following characterization of Szlenk index.  

\begin{theorem}\cite{Causey} Let $K\subset X^*$ be $w^*$-compact and non-empty.  Let $\xi$ be an ordinal.  \begin{enumerate}\item $Sz(K)>\xi$ if and only if there exists $\ee>0$ such that $o_w(\hhh^K_\ee)>\xi$.  \item For any $\ee>0$, $o_w(\hhh^K_\ee)>\xi$ if and only if for any $B$-tree $T$ with $o(T)=\xi$, there exists a collection $(x_t)_{t\in TD}\subset B_X$ such that \begin{enumerate}[(i)]\item for any $\tau=\tau_1\cat (t,U)\in TD$, $x_\tau\in U$, \item for every $\tau\in TD$, $(x_{\tau|_i})_{i=1}^{|\tau|}\in \hhh_\ee^K$. \end{enumerate} \end{enumerate}

\label{characterization theorem}

\end{theorem}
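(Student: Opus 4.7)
The plan is to establish part $(2)$ first by transfinite induction on $\xi$, and then derive part $(1)$ from it by translating between the vector families of $(2)$ and weak*-limits of witnesses in $K$.

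For the $(\Leftarrow)$ direction of $(2)$, given a family $(x_t)_{t \in TD}$ satisfying (i) and (ii), I would prove by transfinite induction on $\zeta$ the strengthened claim: for every $t \in T \cup \{\varnothing\}$ that survives $\zeta$ derivations in $T \cup \{\varnothing\}$, and every $\sigma$ with $\tau := (t, \sigma) \in TD$ (taking $\tau = \varnothing$ if $t = \varnothing$), the associated vector prefix $(x_{\tau|_i})_{i=1}^{|t|}$ lies in $(\hhh_\ee^K)_w^\zeta$. The base case $\zeta = 0$ is exactly (ii). For the successor step: if $t$ survives $\zeta+1$ derivations, then for every $U \in D$, one can pick an immediate extension $t \cat (a)$ that survives $\zeta$ derivations; by property (i) the vector $x_{\tau \cat ((a,U))}$ lies in $U \cap B_X$, while the inductive hypothesis places the extended prefix in $(\hhh_\ee^K)_w^\zeta$, so the original prefix lies in $(\hhh_\ee^K)_w^{\zeta+1}$. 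Limit ordinals follow by intersection. Taking $t = \varnothing$ at $\zeta = \xi$ (using that $\varnothing$ survives $\xi$ derivations in $T \cup \{\varnothing\}$ when $o(T) = \xi$) yields $\varnothing \in (\hhh_\ee^K)_w^\xi$, hence $o_w(\hhh_\ee^K) > \xi$.

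For the $(\Rightarrow)$ direction of $(2)$, given $o_w(\hhh_\ee^K) > \xi$ and a $B$-tree $T$ with $o(T) = \xi$, I would construct $(x_t)_{t \in TD}$ recursively on length, preserving the invariant that for each constructed $\tau \in TD$ whose $T$-component is $s$, the prefix $(x_{\tau|_i})_{i=1}^{|s|}$ lies in $(\hhh_\ee^K)_w^{r(s)}$, where $r(s)$ denotes the rank of $s$ in $T \cup \{\varnothing\}$. The empty prefix lies in $(\hhh_\ee^K)_w^\xi$ by hypothesis; at each extension step, since $r(s \cat (a)) < r(s)$, the definition of the derivation $(\cdot)'_w$ (iterated across any ordinal gap) produces, for the given $U \in D$, a vector in $U \cap B_X$ whose adjunction preserves the invariant, and this defines $x_{\tau \cat ((a,U))}$. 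Property (ii) then follows since $(\hhh_\ee^K)_w^0 = \hhh_\ee^K$.

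For part $(1)$, the $(\Rightarrow)$ direction begins with some $\ee > 0$ and $x^*_0 \in s_\ee^\xi(K)$ and produces a family as in part $(2)$ at level $\ee/3$ (say), yielding $o_w(\hhh_{\ee/3}^K) > \xi$. I would proceed by transfinite recursion, using Proposition~\ref{szlenk facts}(vii) to extract at each level a net in the preceding derivative converging weak* to the current point and norm-separated from it by more than $\ee/2$; because the difference of these functionals tends weak* to $0$ while its norm stays bounded below, the testing vectors may be chosen inside any prescribed weak neighborhood of $0$ in $X$, and weak*-compactness of $K$ then allows one to amalgamate witnesses for progressively longer prefixes into common elements of $K$. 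The $(\Leftarrow)$ direction reverses this: given a family from part $(2)$, the common witnesses along maximal chains, together with suitable weak*-subnet limits, populate the Szlenk derivatives $s_\ee^\zeta(K)$, because property (i) forces distinct branches to yield weak*-convergent nets separated in norm by at least $\ee$. The main obstacle is precisely this common-witness problem in the forward direction: $\hhh_\ee^K$ demands a single $x^* \in K$ testifying $\text{Re\ }x^*(x_i) \geqslant \ee$ for every vector in a long chain, whereas the Szlenk derivation delivers only \emph{pairs} of weak*-close, norm-separated functionals, and fusing them into one element of $K$ requires careful bookkeeping of witnesses and passage to weak*-convergent subnets that exploit the directional consistency enforced by property (i).
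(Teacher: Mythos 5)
Two remarks before the main point. First, the paper itself does not prove this theorem: it is quoted from \cite{Causey}, and the only argument the paper supplies is the remark following the statement, namely that a collection satisfying $(i)$ and $(ii)$ over one $B$-tree of order $\xi$ yields one over any $B$-tree of order $\xi$ via a monotone, length-preserving map (Lemma \ref{monotone lemma}). Your treatment of part $(2)$ is essentially correct, and since you prove both directions directly for an arbitrary $B$-tree, it subsumes that remark; the only detail you should add is the easy observation that each derived family $(\hhh^K_\ee)_w^\zeta$ is closed under taking subsequences (proved by induction on $\zeta$, using that a common witness for a sequence witnesses all of its subsequences), so that $(\hhh^K_\ee)_w^\xi\neq\varnothing$ really does give $\varnothing\in(\hhh^K_\ee)_w^\xi$, which is the starting point of your recursive construction.

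The genuine gap is part $(1)$, which is the substantive content of the cited theorem; what you have written for it is a plan, not a proof. In the forward direction you explicitly leave the ``common-witness problem'' as an acknowledged obstacle, and the one mechanism you invoke is insufficient as stated: if $(x^*_\lambda-y^*_\lambda)$ is a $w^*$-null net with norms bounded below, a norming vector constrained to lie in a prescribed weak neighborhood $U$ of $0$ (equivalently, essentially in a finite-codimensional subspace $Y$) can in general only be guaranteed to recover about half of the norm in the limit (this loss is harmless here, since $\ee$ is existentially quantified on both sides, but it must be tracked); much more importantly, largeness of $\mathrm{Re}\,(x^*_\lambda-y^*_\lambda)(x)$ does not by itself produce a single element of $K$ that $\ee$-norms $x$ together with all previously chosen entries of the branch. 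The missing ingredient is the inductive statement that resolves this: fix $\delta<\ee/4$ and prove by induction on $\zeta$ that whenever $x^*\in s^\zeta_\ee(K)$ and $(x_i)_{i=1}^n\subset B_X$ satisfies $\mathrm{Re}\,x^*(x_i)>\delta$ strictly for every $i$, then $(x_i)_{i=1}^n\in(\hhh^K_\delta)_w^\zeta$. In the successor step one uses the pair of nets of Proposition \ref{szlenk facts}$(vi)$ inside $s^\zeta_\ee(K)$, the lower bound $\limsup_\lambda\|(x^*_\lambda-y^*_\lambda)|_Y\|\geqslant\ee/2$ for the finite-codimensional subspace $Y$ determined by $U$, and the splitting $\mathrm{Re}\,(x^*_\lambda-y^*_\lambda)(x)=\mathrm{Re}\,x^*_\lambda(x)+\mathrm{Re}\,y^*_\lambda(-x)$ to conclude that one of the two functionals of the pair $\delta$-norms the new vector or its negative; being $w^*$-close to $x^*$, that functional still strictly $\delta$-norms the finitely many old entries, so it becomes the new center, and the strict inequalities prevent any cumulative loss across transfinitely many successor steps. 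None of this appears in your sketch, so the forward implication is not established. The backward direction of $(1)$ is likewise only gestured at; the standard route is an induction showing that if $t\in(\hhh^K_\ee)_w^\zeta$ then the $w^*$-compact witness set $\{x^*\in K:\ \mathrm{Re}\,x^*(x)\geqslant\ee\ \text{for every entry}\ x\ \text{of}\ t\}$ meets $s^\zeta_{\ee'}(K)$ for each $\ee'<\ee$, obtained by taking $w^*$-cluster points of witnesses along a weakly null net of one-step extensions and invoking Proposition \ref{szlenk facts}$(viii)$; your description names the ingredients but does not carry out this induction.
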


Item $(2)$ of Theorem \ref{characterization theorem} was not stated in this way in \cite{Causey}.  It was proved for a particular choice of $B$-tree $T$ with $o(T)=\xi$.  However, it is easy to see that if there exists one $B$-tree $S$ with $o(S)=\xi$ and a collection $(y_\tau)_{\tau\in SD}$ satisfying items $(i)$ and $(ii)$ in $(2)$, then for any $B$-tree $T$ with $o(T)=\xi$, there exists such a collection $(x_\tau)_{\tau\in TD}$ satisfying items $(i)$ and $(ii)$.   Indeed, if we fix any monotone $\theta:T\to S$ which preserves lengths, recalling that at least one such function $\theta$ exists, then we may define $\phi:TD\to SD$ by $\phi((t,\sigma))=(\theta(t), \sigma)$.   Then we may define $(x_\tau)_{\tau\in TD}$ by $x_\tau=y_{\phi(\tau)}$, and observe that $(x_\tau)_{\tau\in TD}$ also satisfies items $(i)$ and $(ii)$ of $(2)$ from Theorem \ref{characterization theorem}.    

Since $\Gamma_\xi$ is a $B$-tree with $o(\Gamma_\xi)=\omega^\xi$, we obtain the following particular case of Theorem \ref{characterization theorem}.  We isolate it, since it is the particular form we will use.  We recall that $\Omega_\xi=\Gamma_\xi D$.

\begin{corollary} Let $K\subset X^*$ be a $w^*$-compact, non-empty set.  Then for any $\ee>0$, $o_w(\hhh^K_\ee)>\omega^\xi$ if and only if there exists $(x_\tau)_{\tau\in \Omega_\xi}\subset B_X$ such that \begin{enumerate}[(i)]\item for each $\tau=\tau_1\cat (t, U)\in \Omega_\xi$, $x_\tau\in U$, \item for each $\tau\in \Omega_\xi$, $(x_{\tau|_i})_{i=1}^{|\tau|}\in \hhh^K_\ee$.    \end{enumerate} In particular, $Sz(K)>\omega^\xi$ if and only if for some $\ee>0$, there exists a collection $(x_\tau)_{\tau\in \Omega_\xi}\subset B_X$ satisfying $(i)$ and $(ii)$.   

\label{my corollary}
\end{corollary}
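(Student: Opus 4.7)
The plan is to obtain the corollary as a direct specialization of Theorem \ref{characterization theorem}$(2)$. I would apply that result with the ordinal there taken to be $\omega^\xi$ and with the $B$-tree $T$ chosen to be $\Gamma_\xi$; Proposition \ref{longprop}$(i)$ guarantees that $o(\Gamma_\xi)=\omega^\xi$, so the choice is admissible. Under the convention $\Omega_\xi=\Gamma_\xi D$ fixed in the preceding section, the index set $TD$ appearing in Theorem \ref{characterization theorem}$(2)$ becomes exactly $\Omega_\xi$, and conditions $(i)$ and $(ii)$ of the corollary are then verbatim conditions $(i)$ and $(ii)$ of Theorem \ref{characterization theorem}$(2)$ under this identification. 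The ``only if'' direction is immediate: Theorem \ref{characterization theorem}$(2)$ produces such a collection for every admissible $T$, in particular for $T=\Gamma_\xi$. For the ``if'' direction I would invoke the transfer observation made in the paragraph preceding the corollary—itself an application of the monotone length-preserving maps supplied by Lemma \ref{monotone lemma}—that a collection satisfying $(i)$ and $(ii)$ indexed by one $B$-tree of order $\omega^\xi$ can be pulled back along such a map to produce one indexed by any other $B$-tree of that order. Thus existence of the family for $\Gamma_\xi$ yields existence for every $B$-tree of order $\omega^\xi$, which is precisely the condition that Theorem \ref{characterization theorem}$(2)$ equates with $o_w(\hhh^K_\ee)>\omega^\xi$.

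The ``in particular'' statement about $Sz(K)>\omega^\xi$ is then obtained simply by composing the first equivalence with Theorem \ref{characterization theorem}$(1)$, which asserts that $Sz(K)>\omega^\xi$ if and only if $o_w(\hhh^K_\ee)>\omega^\xi$ for some $\ee>0$; stringing the two equivalences together gives the desired form. There is no substantive obstacle: all the genuine work has been carried out in Theorem \ref{characterization theorem} and in the transfer argument immediately preceding the corollary. The only novelty here is notational—selecting the canonical $B$-tree $\Gamma_\xi$ and its associated uniform index set $\Omega_\xi$—and this choice is deliberate, because $\Omega_\xi$ carries the additional combinatorial structure (the probability weights $\mathbb{P}_\xi$, units, and level prunings developed in the previous section) that will be essential for the subsequent proof of Theorem \ref{theorem}.
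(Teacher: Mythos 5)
Your proposal is correct and follows the paper's own route exactly: the corollary is obtained by specializing Theorem \ref{characterization theorem}$(2)$ to the $B$-tree $T=\Gamma_\xi$ of order $\omega^\xi$, with the converse direction supplied by the transfer remark (via Lemma \ref{monotone lemma}) stated immediately before the corollary, and the final assertion by composing with Theorem \ref{characterization theorem}$(1)$. No gaps.
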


The next corollary indicates that if $K$ is balanced, in order to verify the properties above, up to a small change in $\ee$, we only need to check how well $K$ norms certain convex combinations of the branches of a tree as in Corollary \ref{my corollary}. 

\begin{corollary} Suppose $K\subset X^*$ is $w^*$-compact, balanced, non-empty.  Fix $0<\ee_0<\ee$. Let $(x_\tau)_{\tau\in \Omega_\xi}\subset B_X$ be such that for every $\tau=\tau_1\cat (t,U)\in \Omega_\xi$, $x_\tau\in U$.  For each $\tau\in MAX(\Omega_\xi)$, let $$y_\tau=\sum_{\varnothing\prec \tau'\preceq \tau} \mathbb{P}_\xi(\tau')x_{\tau'}\in \text{\emph{co}}(x_{\tau'}: \varnothing\prec \tau'\preceq \tau).$$ If for each $\tau\in MAX(\Omega_\xi)$, there exists $y^*_\tau\in K$ such that $\text{\emph{Re\ }}y^*_\tau(y_\tau)\geqslant \ee$, then $o_w(\hhh^K_{\ee_0})>\omega^\xi$.   
\label{our corollary}
\end{corollary}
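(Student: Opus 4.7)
The plan is to convert the averaged hypothesis into a pointwise statement by means of the $\ee$-large/$\ee$-small dichotomy (Corollary \ref{maincorollary}), and then feed the resulting collection of vectors into the Szlenk-index characterization in Corollary \ref{my corollary}.

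First I would define the bounded function $g:\Pi\Omega_\xi \to \rr$ by $g(\tau', \tau)=\text{Re\ }y^*_\tau(x_{\tau'})$; boundedness follows from the norm-boundedness of the $w^*$-compact set $K$ together with $x_{\tau'}\in B_X$. The identity $y_\tau=\sum_{\varnothing\prec \tau'\preceq \tau}\pr_\xi(\tau')x_{\tau'}$ combined with the hypothesis $\text{Re\ }y^*_\tau(y_\tau)\geqslant \ee$ translates immediately into
$$\sum_{\varnothing\prec \tau'\preceq \tau}\pr_\xi(\tau')g(\tau',\tau)\geqslant \ee$$
for every $\tau\in MAX(\Omega_\xi)$, so Corollary \ref{maincorollary} shows that $g$ is $\ee$-large.

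Next I would take $\delta=\ee-\ee_0>0$ and extract an extended pruning $(\theta,e):\Omega_\xi\to\Omega_\xi$ with $g(\theta(\tau'),e(\tau))\geqslant \ee_0$ for every $(\tau',\tau)\in\Pi\Omega_\xi$. Setting $z_\tau:=x_{\theta(\tau)}$, I would verify the two hypotheses of Corollary \ref{my corollary} at level $\ee_0$. For (i): since a pruning in the sense of Section 5 preserves the $\Gamma_\xi$-coordinate and replaces the terminal $D$-entry $U$ only by some $V$ with $U\leqslant_D V$ (that is, $V\subseteq U$), the given property $x_{\theta(\tau)}\in V$ yields $z_\tau\in V\subseteq U$. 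For (ii): given $\tau\in\Omega_\xi$, well-foundedness of $\Omega_\xi$ supplies a maximal extension $\sigma\in MAX(\Omega_\xi)$, and I take $x^*=y^*_{e(\sigma)}\in K$; monotonicity of $\theta$ together with $\theta(\sigma)\preceq e(\sigma)$ gives $\theta(\tau|_i)\preceq e(\sigma)$ for every $i\leqslant |\tau|$, whence
$$\text{Re\ }x^*(z_{\tau|_i})=g(\theta(\tau|_i),e(\sigma))\geqslant \ee_0,$$
so $(z_{\tau|_i})_{i=1}^{|\tau|}\in\hhh^K_{\ee_0}$. Corollary \ref{my corollary} then delivers $o_w(\hhh^K_{\ee_0})>\omega^\xi$.

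The only step requiring genuine care is the verification of (i), which relies critically on the rigid form of prunings defined in Section 5: they preserve the tree-coordinate of every sequence and only refine the weak-neighborhood entries, which is precisely what guarantees that each pulled-back vector $x_{\theta(\tau)}$ still lies in the prescribed $U$. Once that observation is made, the argument is a routine marriage of the combinatorial dichotomy with the Szlenk-index characterization.
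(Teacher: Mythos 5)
Your proposal is correct and follows essentially the same route as the paper: define $g(\tau',\tau)=\text{Re\ }y^*_\tau(x_{\tau'})$, invoke Corollary \ref{maincorollary} to get $\ee$-largeness, extract an extended pruning with $\delta=\ee-\ee_0$, and feed $z_\tau=x_{\theta(\tau)}$ into Corollary \ref{my corollary}. The verifications of conditions (i) and (ii) match the paper's as well.
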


\begin{proof} Since $K$ is bounded, the function $g:\Pi\Omega_\xi\to \rr$ given by $g(\tau', \tau)=\text{Re\ }y^*_\tau(x_{\tau'})$ is bounded.  Since for any $\tau\in MAX(\Omega_\xi)$, $\sum_{\varnothing\prec \tau'\preceq \tau}\mathbb{P}_\xi(\tau')g(\tau', \tau) = \text{Re\ }y^*_\tau(y_\tau)\geqslant \ee$, Corollary \ref{maincorollary} implies that $g$ must be $\ee$-large.   Let $\delta=\ee-\ee_0$ and fix an extended pruning $(\theta, e):\Omega_\xi\to \Omega_\xi$ such that for every $(\tau', \tau)\in \Pi\Omega_\xi$, $g(\theta(\tau'), e(\tau))\geqslant \ee-\delta=\ee_0$.   Then by Corollary \ref{my corollary}, $(z_{\tau'})_{\tau'\in \Omega_\xi}=(x_{\theta(\tau')})_{\tau'\in \Omega_\xi}\subset B_X$ witnesses that $o_w(\hhh^K_{\ee_0})>\omega^\xi$.   Indeed, for any $\tau'=\tau_1\cat (t, U)\in \Omega_\xi$, since $\theta$ is a pruning, $\theta(\tau')=\tau_2\cat(t_2, V)$ for some $V\in D$ such that $V\subset U$.  Then $$z_{\tau'}=x_{\theta(\tau')}\in V\subset U,$$ showing that $(z_{\tau'})_{\tau'\in \Omega_\xi}$ satisfies $(i)$ of Corollary \ref{my corollary}.   For any $\tau\in MAX(\Omega_\xi)$, $y^*_{e(\tau)}$ is such that for each $1\leqslant i\leqslant |\tau|$, $\text{Re\ }y^*_{e(\tau)}(z_{\tau|_i})= y^*_{e(\tau)}(x_{\theta(\tau|_i)})\geqslant \ee_0$, since $(\tau|_i, \tau)\in \Pi\Omega_\xi$.  This shows that $(z_{\tau'})_{\tau'\in\Omega_\xi}$ satisfies $(ii)$ of Corollary \ref{my corollary}.

\end{proof}

\begin{proof}[Proof of Theorem \ref{theorem}] If $K$ is empty, there is nothing to show.  Assume $K$ is non-empty and let $L=\overline{\text{co}}^{w^*}(K)$.   By Proposition \ref{szlenk facts}$(v)$, we know $Sz(K)\leqslant Sz(L)$ and, since $Sz(L)$ must be $\infty$ or a gamma number, $\Gamma(Sz(K))\leqslant Sz(L)$.    We must show the reverse inequality.  Note that if we can show that $Sz(\overline{\text{co}}^{w^*}(S_\mathbb{F}K))\leqslant \Gamma(Sz(S_\mathbb{F}K))$, then since $L\subset \overline{\text{co}}^{w^*}(S_\mathbb{F}K)$, Lemma \ref{lemma1} will yield that $$Sz(L)\leqslant Sz(\overline{\text{co}}^{w^*}(S_\mathbb{F} K))\leqslant \Gamma(Sz(S_\mathbb{F} K))=\Gamma(Sz(K)),$$ which is the desired inequality.  Therefore we may replace $K$ with $S_\mathbb{F} K$ and assume that $K$ is balanced.

We will show that if $o_w(\hhh^L_\ee)>\omega^\xi$ for some $\ee>0$, then $o_w(\hhh^K_{\ee_0})>\omega^\xi$ for any $0<\ee_0<\ee$.   By the last statement of Corollary \ref{my corollary}, this will imply that if $Sz(L)>\omega^\xi$, $Sz(K)>\omega^\xi$, which implies that $Sz(L)\leqslant \Gamma(Sz(K))$.   Suppose $o_w(\hhh^L_\ee)>\omega^\xi$.  Fix $(x_\tau)_{\tau\in \Omega_\xi}\subset B_X$ as in Corollary \ref{my corollary}.  Since $L$ is $w^*$-compact, convex, and balanced, a finite sequence $(x_i)_{i=1}^n\subset B_X$ lies in $\hhh^L_\ee$ if and only if for every convex combination $x$ of $(x_i)_{i=1}^n$, $|x|_L=\max_{x^*\in L}\text{Re\ }x^*(x)\geqslant \ee$.   This follows from using the Hahn-Banach theorem as explained in \cite{Causey}.    For each $\tau\in MAX(\Omega_\xi)$, set $y_\tau=\sum_{\varnothing\prec \tau'\preceq \tau}\mathbb{P}_\xi(\tau')x_{\tau'}$ as in Corollary \ref{our corollary}.  Then our previous remarks guarantee that  $|y_\tau|_L\geqslant \ee$ for each $\tau\in MAX(\Omega_\xi)$.  But since $L$ is the $w^*$-closed, convex hull of $K$, and since $K$ is balanced, this means there exists $y^*_\tau\in K$ such that $\text{Re\ }y^*_\tau(y_\tau)\geqslant \ee$.   Applying Corollary \ref{our corollary} yields that $o_w(\hhh^K_{\ee_0})>\omega^\xi$ for any $0<\ee_0<\ee$.

\end{proof}

\section{Technical lemmata and the proof of Lemma \ref{jobdone}}

\begin{lemma} For any $\xi\in \emph{\ord}$, any $n\in \nn$,  any finite set $A$, and any $h:MAX(\Omega_{\xi, n})\to A$, there exist $a\in A$ and a level pruning $(\theta, f):\Omega_{\xi, n}\to \Omega_{\xi, n}$ such that for any $\tau\in MAX(\Omega_{\xi, n})$, $h(f(\tau))=a$.  
\label{easylemma}
\end{lemma}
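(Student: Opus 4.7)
The plan is to prove the lemma by nested transfinite induction: an outer induction on $\xi$ and, inside each step, an inner induction on $n$. The outer hypothesis is that the conclusion holds for every $\zeta < \xi$ and every $m \in \nn$, and the inner hypothesis is that it holds for the current $\xi$ and every $m \leqslant n$.

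For the inner base case $n = 1$, where a level pruning on $\Omega_{\xi, 1} = \Omega_\xi$ is exactly an extended pruning on $\Omega_\xi$, I split on $\xi$. When $\xi = 0$, $\Omega_0$ is parameterized by $D$ alone and $h$ becomes a finite coloring of $D$; finiteness of $A$ combined with directedness of $D$ forces some color class to be cofinal in $D$ (otherwise pick $U_a$ beyond which $a$ never appears for each $a \in A$, take a common upper bound, and derive a contradiction), and this immediately yields the required extended pruning. When $\xi = \zeta + 1$, the totally incomparable decomposition $\Omega_{\zeta+1} = \bigcup_{m=1}^\infty \Omega_{\zeta, m}$ from Proposition \ref{longprop}(v) lets me apply the outer hypothesis on each $\Omega_{\zeta, m}$ to obtain $a_m \in A$ and a level pruning $(\theta_m, f_m)$; pigeonholing produces an $a \in A$ with $a_m = a$ for infinitely many $m$, and I assemble an extended pruning on $\Omega_{\zeta+1}$ by composing, for each $m$, a preliminary extended pruning $\Omega_{\zeta, m} \to \Omega_{\zeta, m'}$ (with $m' \geqslant m$ chosen so that $a_{m'} = a$, the existence of such prunings for $m \leqslant m'$ being noted earlier in the paper) with the given $(\theta_{m'}, f_{m'})$. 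When $\xi$ is a limit ordinal, the decomposition $\Omega_\xi = \bigcup_{\zeta<\xi}(\omega^\zeta + \Gamma_{\zeta+1})D$ from Proposition \ref{longprop}(iii) is handled analogously, with finiteness of $A$ guaranteeing a cofinal color class in $[0, \xi)$.

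For the inner inductive step $n \to n+1$ with $\xi$ fixed, I exploit the layered structure of $\Omega_{\xi, n+1}$: the first level $\Lambda_{\xi, n+1, 1}$ is canonically identifiable with $\Omega_\xi$, and for each $\tau_1 \in \Sigma_{\xi, n+1, 1}$ the wedge $A_{\tau_1} = \{\tau' \in \Omega_{\xi, n+1} : \tau_1 \prec \tau'\}$ is canonically identifiable with $\Omega_{\xi, n}$. The restriction of $h$ to $MAX(A_{\tau_1})$ induces $h_{\tau_1}\colon MAX(\Omega_{\xi, n}) \to A$, to which the inner inductive hypothesis supplies $a_{\tau_1} \in A$ and a level pruning $(\theta_{\tau_1}, f_{\tau_1})$ with $h_{\tau_1} \circ f_{\tau_1} \equiv a_{\tau_1}$. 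Now define $h'\colon MAX(\Omega_\xi) \to A$ by $h'(\tau_1) = a_{\tau_1}$ and apply the already-proved $n = 1$ case for this $\xi$ to obtain $a \in A$ and an extended pruning $(\theta_1, e_1)$ on $\Omega_\xi$ with $h' \circ e_1 \equiv a$. Assemble a single level pruning $(\theta, f)$ on $\Omega_{\xi, n+1}$ by using $(\theta_1, e_1)$ on the first level and, on each $A_{\tau_1}$, using the component $(\theta_{e_1(\tau_1)}, f_{e_1(\tau_1)})$ relocated so that its image lies in $A_{e_1(\tau_1)}$. A maximal element $\tau = \tau_1 \cat \tau''$ of $\Omega_{\xi, n+1}$ is then sent by $f$ to $e_1(\tau_1) \cat f_{e_1(\tau_1)}(\tau'')$, giving $h(f(\tau)) = a_{e_1(\tau_1)} = h'(e_1(\tau_1)) = a$, as required.

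The main obstacle is really just bookkeeping: verifying that the assembled $(\theta, f)$ satisfies the three defining conditions of a level pruning, most notably condition (iii) coupling $f$ on lower-level max elements to $\theta$ at higher levels. At the first level this reduces to the extension property $\theta_1(\tau_1) \preceq e_1(\tau_1)$, while at higher levels it follows directly from condition (iii) for each component level pruning $(\theta_{e_1(\tau_1)}, f_{e_1(\tau_1)})$; conditions (i) and (ii) are inherited similarly. No genuinely new combinatorial ingredient is required beyond the pigeonhole-plus-composition argument underlying the $n = 1$ case.
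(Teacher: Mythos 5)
Your proposal is correct and is essentially the paper's own argument: the nested induction (outer on $\xi$, inner on $n$) is just a reorganization of the paper's lexicographic induction on $\textbf{Ord}\times\nn$, and the three ingredients — the constant-subnet/cofinal-color argument at $(0,1)$, pigeonholing over the totally incomparable unions $\Gamma_{\xi+1}=\cup_n\Gamma_{\xi,n}$ and $\Gamma_\xi=\cup_{\zeta<\xi}(\omega^\zeta+\Gamma_{\zeta+1})$ followed by composition with preliminary extended prunings, and the step $n\to n+1$ via the induced coloring $H(\tau)=a_\tau$ on $\Sigma_{\xi,n+1,1}\cong MAX(\Omega_\xi)$ — all match. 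No further changes are needed.
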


\begin{proof} Recall that if $n=1$, a level pruning $(\theta, f):\Omega_{\xi,n}\to \Omega_{\xi,n}$ is simply an extended pruning. We recall also that any level pruning from $\Omega_{\xi,n}$ into itself must map $MAX(\Omega_{\xi,n})$ into itself.   We prove the result by induction on $\ord\times \nn$ ordered lexicographically.  The result for $\xi=0$ and $n=1$ follows from the fact that the net $(h((0), (U)))_{U\in D}$ in $A$ must have a constant subnet.  Assume $\xi$ is a limit ordinal and the result holds for every pair $(\zeta, 1)$ with $\zeta<\xi$.  Fix $h:MAX(\Omega_{\xi, 1})\to A$.  Since $\Omega_{\xi,1}=\cup_{\zeta<\xi} \Theta_{\zeta+1}$, where $\Theta_{\zeta+1}$ is as in the proof of Theorem \ref{jobdone} and can be naturally identified with $\Omega_{\zeta+1}=\Omega_{\zeta+1,1}$, we may apply the inductive hypothesis to obtain for each $\zeta<\xi$ some extended pruning $(\theta_\zeta, e_\zeta):\Theta_{\zeta+1}\to \Theta_{\zeta+1}$ and $a_\zeta\in A$ such that $h\circ e_\zeta(\tau)=a_\zeta$ for all $\tau\in MAX(\Theta_{\zeta+1})$.  Fix some $a\in A$ such that $\sup \{\zeta<\xi: a_\zeta=a\}=\xi$, noting that such an $a\in A$ must exist.  For each $\zeta<\xi$, we fix $\eta_\zeta>\zeta$ such that $a=a_{\eta_\zeta}$ and an extended pruning $(\theta'_\zeta, e'_\zeta):\Theta_{\zeta+1}\to \Theta_{\eta_\zeta+1}$. Let $(\theta,f):\Omega_\xi\to \Omega_\xi$ be defined by $\theta|_{\Theta_{\zeta+1}}= \theta_{\eta_\zeta}\circ \theta'_\zeta$ and $f|_{MAX(\Theta_{\zeta+1})}=e_{\eta_\zeta}\circ e'_\zeta$.    Then $a=h\circ f(\tau)$ for all $\tau\in MAX(\Omega_\xi)$.   

Assume that for some $\xi\in \ord$ and some $n\in \nn$,  the result holds for every pair $(\xi, k)$ with $1\leqslant k\leqslant n$.  Fix $h:MAX(\Omega_{\xi, n+1})
\to A$.   For each $\tau\in \Sigma_{\xi, n+1, 1}$, let $B_\tau=\{\tau'\in \Omega_{\xi, n+1}: \tau\prec \tau'\}$.  Then $B_\tau$ can be naturally identified with $\Omega_{\xi, n}$ in a way which also identifies $h|_{MAX(B_\tau)}$ with an $A$-valued function on $MAX(\Omega_{\xi, n})$.  Then there exists $a_\tau\in A$ and a level pruning $(\theta_\tau, f_\tau):B_\tau\to B_\tau$ so that for all $\tau'\in MAX(B_\tau)$, $h\circ f_\tau(\tau')=a_\tau$.  Of course, level prunings were defined as being between $\Omega_{\xi, p}$ and $\Omega_{\xi, q}$, but we agree to call $(\theta_\tau, f_\tau)$ a level pruning since it is identified with one via the identification of $B_\tau$ with $\Omega_{\xi, n}$.   We define $H:\Sigma_{\xi, n+1, 1}\to A$ by $H(\tau)=a_\tau$.   Note that since $\Lambda_{\xi, n+1, 1}$ is naturally identified with $\Omega_\xi$ and $\Sigma_{\xi, n+1, 1}=MAX(\Lambda_{\xi, n+1, 1})$  is naturally identified with $MAX(\Omega_\xi)$, we apply the inductive hypothesis again to obtain an extended pruning $(\theta', e'):\Lambda_{\xi, n+1, 1}\to \Lambda_{\xi, n+1, 1}$ and $a\in A$ so that $H\circ e'(\tau)=a$ for all $\tau\in \Sigma_{\xi, n+1, 1}$.   We then define $\theta$ on $\Lambda_{\xi, n+1, 1}$ by setting it equal to $\theta'$.  We define the level pruning $f$ on $\Sigma_{\xi, n+1, 1}$ by setting it equal to $e'$.   Given $\tau\in \Sigma_{\xi, n+1, 1}$, we first let $j_\tau:B_\tau\to B_{e'(\tau)}$ denote the natural identification of these sets and then define $\theta$ on $B_\tau$ by setting it equal to $\theta_{e'(\tau)}\circ j_\tau$.  In order to define $f$ on $B_\tau\cap \Sigma_{\xi, n+1}$, we note that $j_\tau$ identifies $B_\tau\cap \Sigma_{\xi, n+1}$ with $B_{e'(\tau)}\cap \Sigma_{\xi, n+1}$ and let $f|_{B_\tau}= f_{e'(\tau)}\circ j_\tau$.   Then $(\theta, f)$ is a level pruning and for any $\tau\in MAX(\Omega_{\xi, n+1})$, $a=h\circ f(\tau)$.

Last, assume that for some $\xi\in \ord$ and all $n\in \nn$, the result holds for every pair $(\xi, n)$.  Fix a function $h:MAX(\Omega_{\xi+1})\to A$.   For each $n\in\nn$, apply the inductive hypothesis to obtain a level pruning $(\theta_n, f_n):\Omega_{\xi, n}\to \Omega_{\xi, n}$ and $a_n\in A$ such that $a_n=h\circ f_n(\tau)$ for all $\tau\in MAX(\Omega_{\xi, n})$.   Fix $a\in A$ and natural numbers $n_1<n_2<\ldots$ such that for all $i\in \nn$, $a=a_{n_i}$.  Let $j_i:\Omega_{\xi, i}\to \Omega_{\xi, n_i}$ be the natural identification of $\Omega_{\xi, i}$ with the first $i$ levels of $\Omega_{\xi, n_i}$ and let $e'_i:MAX(\Omega_{\xi, i})\to MAX(\Omega_{\xi, n_i})$ be any function such that $(j_i, e'_i): \Omega_{\xi, i}\to \Omega_{\xi, n_i}$ is an extended pruning.    Let $\theta|_{\Omega_{\xi, i}}=\theta_{n_i}\circ j_i$ and $f|_{MAX(\Omega_{\xi, i})}= f_{n_i}\circ e'_i$.  

\end{proof}

\begin{lemma} Suppose that $\xi\in\emph{\ord}$ is such that for any $\ee\in \rr$ and any $g:\Pi\Omega_\xi \to [-1,1]$, $g$ is either $\ee$-large or $\ee$-small.  Then for any $\delta>0$, any $\ee\in \rr$, any $n\in \nn$, and any $g:\Pi \Omega_{\xi, n}\to [-1,1]$, there exists a level pruning $(\theta, f):\Omega_{\xi, n}\to \Omega_{\xi, n}$ such that for every unit $U$ of $\Omega_{\xi, n}$, either \begin{enumerate}[(i)]\item for every $\tau\in MAX(U)$ and every $\tau''\in MAX(\Omega_{\xi,n})$ such that $\tau\preceq \tau''$, $$\sum_{\pi_{\xi, n}(f(\tau)) \prec \tau'\preceq f(\tau)} \mathbb{P}_\xi(\iota_{\xi, n}(\tau'))g(\tau', f(\tau''))<\ee,$$ or \item  for every $\tau'\in U$ and every $\tau\in MAX(\Omega_{\xi, n})$ such that $\tau'\preceq \tau$, $$g(\theta(\tau'), f(\tau))\geqslant \ee-\delta.$$  \end{enumerate}   

\label{easy2}

\end{lemma}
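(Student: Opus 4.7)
The plan is to prove the lemma by induction on $n \in \nn$. For the base case $n = 1$, the entire tree $\Omega_{\xi,1}$ consists of the single unit $\Omega_\xi$ (since $\pi_{\xi,1}(\tau) = \varnothing$ for every $\tau$), so I would apply the hypothesized dichotomy directly to $g : \Pi\Omega_\xi \to [-1,1]$. If $g$ is $\ee$-large, the witnessing extended pruning, reinterpreted as a level pruning $(\theta, f)$, satisfies case (ii); if $g$ is $\ee$-small, it satisfies case (i), noting that $\pi_{\xi,1}(f(\tau)) = \varnothing$ and $\iota_{\xi,1}$ is the identity, so the two sums coincide.

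For the inductive step from $n$ to $n+1$, given $g : \Pi\Omega_{\xi,n+1} \to [-1,1]$, I would first apply the inductive hypothesis above each $\tau_0 \in \Sigma_{\xi,n+1,1}$. Identifying $A_{\tau_0} = \{\tau \in \Omega_{\xi,n+1} : \tau_0 \prec \tau\}$ with $\Omega_{\xi,n}$ in the natural way, the pullback $g_{\tau_0}(\tau', \tau) := g(\tau_0 \cat \tau', \tau_0 \cat \tau)$ is a function on $\Pi\Omega_{\xi,n}$ to which the inductive hypothesis supplies a level pruning $(\theta_{\tau_0}, f_{\tau_0})$, classifying each unit above $\tau_0$ as case (i) or (ii). The classification data, however, depends on $\tau_0$, so I would then invoke Lemma \ref{easylemma} (in an iterated fashion, treating the classifications unit-by-unit) to uniformize over $\tau_0 \in \Sigma_{\xi,n+1,1} \cong MAX(\Omega_\xi)$, obtaining an extended pruning on $\Omega_\xi$ after which the case (i)/(ii) labels of the level-$\geq 2$ units become $\tau_0$-independent.

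Finally, to classify the level-1 unit $U_0 \cong \Omega_\xi$ itself, I would define an auxiliary function $\tilde g : \Pi\Omega_\xi \to [-1,1]$ that summarizes the behavior of $g$ along the max extensions selected by the Step~1 and Step~2 prunings—concretely, $\tilde g(\tau', \tau)$ is $g(\tau', \tau \cat \rho_\tau)$ for a canonical max $\rho_\tau$ supplied by the earlier prunings. Applying the dichotomy hypothesis to $\tilde g$ yields either case (ii) for $U_0$ (if $\ee$-large) or case (i) for $U_0$ (if $\ee$-small), via a final extended pruning on $\Omega_\xi$. Composing this level-1 extended pruning with the prunings above each $\tau_0$ (relabeled through the level-1 map) produces the desired global level pruning $(\theta, f)$.

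The main obstacle is that the case (i) and case (ii) conditions involve quantification over \emph{every} $\tau'' \in MAX(\Omega_{\xi,n+1})$ extending the given $\tau$, while the dichotomy hypothesis and Lemma \ref{easylemma} only produce information at a single distinguished extension. Bridging this gap requires the Step 2 uniformization to render the behavior of $g$ at post-pruning max extensions essentially independent of the specific extension chosen, so that verifying the desired inequality at one canonical $\tau''$ implies it for all. Since $\Omega_{\xi,n}$ may contain infinitely many units, the uniformization must be iterated carefully (using the well-foundedness of $\Gamma_{\xi,n}$ and finite color sets at each step of the iteration) to stay within the scope of Lemma \ref{easylemma}, and the resulting nested prunings must be composed so as to respect the three defining conditions of a level pruning.
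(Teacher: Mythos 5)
Your overall architecture (induct on $n$; apply the inductive hypothesis above each $\tau_0\in\Sigma_{\xi,n+1,1}$ via the identification of $B_{\tau_0}$ with $\Omega_{\xi,n}$; then classify the first-level unit by applying the dichotomy hypothesis to an auxiliary function on $\Pi\Omega_\xi$; compose) matches the paper's. But there is a genuine gap at the step you yourself flag as ``the main obstacle,'' and the remedy you propose does not close it. Your Step 2 uniformizes the case (i)/(ii) \emph{labels} of the higher-level units across different $\tau_0$. That uniformization is unnecessary (the lemma allows each unit its own alternative, and each unit of level $\geqslant 2$ lies in a single $B_{\tau_0}$, so it inherits its label from that one application of the inductive hypothesis), and as described it is also not justified: the color set ``assignment of labels to all units of $\Omega_{\xi,n}$'' is infinite when there are infinitely many units, and Lemma \ref{easylemma} only handles finite color sets; an infinite iteration of prunings is not obviously composable, and your appeal to well-foundedness is too vague to carry it.

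More importantly, label-uniformization does not achieve what is actually needed, namely that $g(\tau',\cdot)$ be essentially constant, for $\tau'$ in the first level, over the post-pruning maximal extensions of a given $\tau\in\Sigma_{\xi,n+1,1}$ --- without this, your $\tilde g(\tau',\tau)=g(\tau',\tau\cat\rho_\tau)$ controls only the single extension $\rho_\tau$ and the quantifier over all $\tau''\in MAX(\Omega_{\xi,n+1})$ in alternatives (i) and (ii) is lost. The missing mechanism is a \emph{value}-stabilization: for each $\tau\in\Sigma_{\xi,n+1,1}$ with initial segments $(\tau_i)_{i=1}^r$, partition $[-1,1]^r$ into \emph{finitely} many sets of $\ell_\infty^r$-diameter at most $\delta/2$ and apply Lemma \ref{easylemma} inside $B_\tau$ to the coloring $\tau'\mapsto$ (the cell containing $(g(\tau_i,f_\tau(\tau')))_{i=1}^r$). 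This forces $|g(\tau_0,f_\tau\circ f_\tau'(\tau'))-g(\tau_0,f_\tau\circ f_\tau'(\tau''))|\leqslant\delta/2$ for all selected maximal extensions $\tau',\tau''$, after which one defines $G(\tau_0,\tau)$ as the \emph{infimum} of $g(\tau_0,\cdot)$ over those extensions and applies the dichotomy to $G$ at level $\ee-\delta/2$; the $\delta/2$ slack then transfers the conclusion at one maximal extension to all of them. Note that for the units of level $\geqslant 2$ no such stabilization is needed, since the inductive hypothesis already quantifies over all maximal extensions. Without this finite-partition argument (or an equivalent), your proof of the inductive step does not go through.
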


\begin{proof} We induct on $n\in \nn$.  The $n=1$ case is simply  the hypothesis that for every $\ee\in \rr$, every function $g:\Pi \Omega_\xi\to [-1,1]$ is either $\ee$-large or $\ee$-small.   Assume the result holds for every natural number $k\leqslant n$.  Fix $\ee\in \rr$, $\delta>0$, and $g:\Pi \Omega_{\xi, n+1}\to [-1,1]$.    For each $\tau\in \Sigma_{\xi, n+1, 1}$, let $B_\tau=\{\tau'\in \Omega_{\xi, n+1}: \tau\prec\tau'\}$.   Identifying $B_\tau$ with $\Omega_{\xi, n}$, we apply the inductive hypothesis to obtain $(\theta_\tau, f_\tau):B_\tau\to B_\tau$ such that for every unit $U$ of $B_\tau$, either $(i)$ or $(ii)$ of the lemma is satisfied.   Let $(\tau_i)_{i=1}^r$ be a list of the initial segments of $\tau$ and let $A$ be a partition of $[-1,1]^r$ into sets of diameter not exceeding $\delta/2$, where $[-1,1]^r$ is endowed with the $\ell_\infty^r$ metric.   Define $h_\tau:MAX(B_\tau)\to A$ by letting $h_\tau(\tau')$ denote the member $S$ of $A$ such that $(g(\tau_i,f_\tau(\tau')))_{i=1}^r \in S$.   Still identifying $B_\tau$ with $\Omega_{\xi, n}$, Lemma \ref{easylemma} implies the existence of a level pruning $(\theta'_\tau, f'_\tau):B_\tau\to B_\tau$ and $S_\tau\in A$ such that for all $\tau'\in MAX(B_\tau)$, $h_\tau(f'_\tau(\tau'))=S_\tau$.  This means that for any $\tau_0\in \Lambda_{\xi, n+1, 1}$, any extension $\tau\in \Sigma_{\xi, n+1, 1}$ of $\tau_0$, and any two maximal extensions $\tau', \tau''\in MAX(\Omega_{\xi, n+1})$ of $\tau$, $|g(\tau_0, f_\tau\circ f'_\tau(\tau'))- g(\tau_0, f_\tau\circ f'_\tau(\tau'))|\leqslant \delta/2$.    

Let $G:\Pi \Lambda_{\xi, n+1, 1}\to [-1,1]$ be given by $$G(\tau_0, \tau)=\inf\{g(\tau_0, f_\tau\circ f'_\tau(\tau')): \tau\preceq \tau'\in MAX(\Omega_{\xi, n+1})\}.$$ By the last sentence of the previous paragraph, for any $(\tau_0, \tau)\in \Pi \Lambda_{\xi, n+1, 1}$ and any maximal extension $\tau'\in MAX(\Omega_{\xi, n+1})$ of $\tau$, $g(\tau_0, f_\tau\circ f'_\tau(\tau'))\leqslant \delta/2+G(\tau_0, \tau')$.    Since $\Lambda_{\xi, n+1, 1}$ is identifiable with $\Omega_\xi$, we may deduce that $G$ is either $(\ee-\delta/2)$-small or $(\ee-\delta/2)$-large.   This means that we may find an extended pruning $(\theta', e'):\Lambda_{\xi, n+1, 1}\to \Lambda_{\xi, n+1, 1}$ so that either $\sum_{\tau'\preceq e'(\tau)} \mathbb{P}_\xi(\iota_{\xi, n+1}(\tau'))G(\tau', e'(\tau)) < \ee-\delta/2$ for every $\tau\in \Sigma_{\xi, n+1, 1}$ or so that $G(\theta'(\tau'), e'(\tau))\geqslant \ee-\delta/2-\delta/2=\ee-\delta$ for every $(\tau', \tau)\in \Pi \Lambda_{\xi, n+1, 1}$.   

We define $\theta$ on $\Lambda_{\xi, n+1, 1}$ by setting it equal to $\theta'$ and $f$ on $\Sigma_{\xi, n+1, 1}$ by setting it equal to $e'$.  For a fixed $\tau\in \Sigma_{\xi, n+1, 1}$, let $j_\tau:B_\tau\to B_{e'(\tau)}$ be the natural identification. Define $\theta$ on $B_\tau$ by $\theta=\theta_{e'(\tau)}\circ \theta'_{e'(\tau)}\circ j_\tau$ and define $f$ on $B_\tau \cap \Sigma_{\xi, n+1}$ by $f=f_{e'(\tau)}\circ f'_{e'(\tau)}\circ j_\tau$.    We show that $(\theta, f)$ satisfies the conclusion.  We first show that one of the two alternatives is satisfied for the unit $U=\Lambda_{\xi, n+1, 1}$.   Suppose that we are in the case that for every $\tau\in \Sigma_{\xi, n+1, 1}=MAX(U)$, $$\sum_{\tau'\preceq e'(\tau)} \mathbb{P}_\xi(\iota_{\xi, n+1}(\tau'))G(\tau', e'(\tau))<\ee-\delta/2.$$ Then for any maximal extension $\tau''\in MAX(\Omega_{\xi, n+1})$ of $\tau$, $j_\tau(\tau'')$ is a maximal extension of $e'(\tau)$.  As mentioned in the previous paragraph, for every $\tau'\preceq e'(\tau)$, $$g(\tau', f(\tau''))= g(\tau', f_{e'(\tau)}\circ f'_{e'(\tau)}(j_\tau(\tau''))) \leqslant \delta/2 + G(\tau', e'(\tau)).$$   Since $\sum_{\tau'\preceq e'(\tau)} \mathbb{P}_\xi(\iota_{\xi, n+1}(\tau'))=1$, \begin{align*} \sum_{\pi_{\xi, n+1}(f(\tau))\prec \tau'\preceq f(\tau)} \mathbb{P}_\xi(\iota_{\xi, n+1}(\tau'))g(\tau', f(\tau'')) & = \sum_{\tau'\preceq e'(\tau)} \mathbb{P}_\xi(\iota_{\xi, n+1}(\tau'))(\delta/2+ G(\tau', e'(\tau))) \\ & = \delta/2+\sum_{\tau'\preceq e'(\tau)}\mathbb{P}_\xi(\iota_{\xi, n+1}(\tau'))G(\tau', e'(\tau)) \\ & <\delta/2+\ee-\delta/2=\ee.\end{align*} This shows that in this case, the unit $\Lambda_{\xi, n+1, 1}$ satisfies $(ii)$ in the statement of the lemma.  Next, suppose we are in the case that for every $(\tau', \tau)\in \Pi \Lambda_{\xi, n+1, 1}$, $G(\theta'(\tau), e'(\tau))\geqslant \ee-\delta$.   Then for any $\tau'\in \Lambda_{\xi, n+1, 1}$ and any $\tau''\in MAX(\Omega_{\xi, n+1})$, if $\tau\in \Sigma_{\xi, n+1,1}$ is the extension of $\tau'$ which is an initial segment of $\tau''$, then since $j_\tau(\tau'')$ is a maximal extension of $e'(\tau)$, \begin{align*} g(\theta(\tau'), f(\tau'')) & = g\bigl(\theta'(\tau), f_{e'(\tau)}\circ f'_{e'(\tau)}(j_\tau(\tau''))\bigr) \\ & \geqslant \inf \bigl\{g\bigl(\theta'(\tau'), f_{e'(\tau)}\circ f'_{e'(\tau)}(\hat{\tau})\bigr): e'(\tau)\preceq \hat{\tau}\in MAX(\Omega_{\xi, n+1})\bigr\} \\ & =G(\theta'(\tau'), e'(\tau)) \geqslant \ee-\delta. \end{align*} In this case, item $(i)$ in the statement of the lemma is satisfied by the unit $\Lambda_{\xi, n+1, 1}$.  Therefore in either case, one of the two items $(i)$, $(ii)$ is satisfied by this unit.

We finally show that one of the two items $(i)$, $(ii)$ is satisfied by every unit $U$ of $\Omega_{\xi, n+1}$ other than $\Lambda_{\xi, n+1, 1}$.   To that end, fix such a unit and fix $\tau\in \Sigma_{\xi, n+1, 1}$ such that $U\subset B_\tau$.   Since $(\theta_{e'(\tau)}'\circ j_\tau,f_{e'(\tau)}'\circ j_\tau):B_\tau\to B_{e'(\tau)}$ is (identified with) a level pruning, there exists a unit $V\subset B_{e'(\tau)}$ such that $$\theta_{e'(\tau)}'\circ j_\tau(U)\subset V \text{\ \ and\ \ }f_{e'(\tau)}'\circ j_\tau(MAX(U))\subset MAX(V).$$  By our choice of $(\theta_{e'(\tau)}, f_{e'(\tau)})$, either for every $\tau_1\in MAX(V)$ and every maximal extension $\tau_2$ of $\tau_1$, $$\sum_{\pi_{\xi, n+1}(f_{e'(\tau)}(\tau_1))\prec \tau'\preceq f_{e'(\tau)}(\tau_1)}\mathbb{P}_\xi(\iota_{\xi, n+1}(\tau'))g(\tau', f_{e'(\tau)}(\tau_2))<\ee,$$ or for every $(\tau_1, \tau_2)\in \Pi B_{e'(\tau)}$, $$g(\theta_{e'(\tau)}(\tau_1), f'_{e'(\tau)}(\tau_2))\geqslant \ee-\delta.$$  In the first case, for any $\tau_0\in MAX(U)$ and every maximal extension $\tau''$ of $\tau_0$, $\tau_1:=f'_{e'(\tau)}\circ j_\tau(\tau_0)\in MAX(V)$ and $\tau_2:=f'_{e'(\tau)}\circ j_\tau(\tau'')$ is a maximal extension of $\tau_1$, whence \begin{align*} \sum_{\pi_{\xi, n+1}(f(\tau_0))\prec \tau'\preceq f(\tau_0)} \mathbb{P}_\xi(\iota_{\xi, n+1}(\tau')) g(\tau', f(\tau'')) & = \sum_{\pi_{\xi, n+1}(f_{e'(\tau)}(\tau_1))\prec \tau'\preceq f_{e'(\tau)}(\tau_1)} \mathbb{P}_\xi(\tau')g(\tau', f_{e'(\tau)}(\tau_2)) \\ &  <\ee,\end{align*} and $(i)$ is satisfied for the unit $U$.  

In the second case, for any $\tau'\in U$ and any maximal extension $\tau''$ of $\tau'$, $\tau_1:=f'_{e'(\tau)}\circ j_\tau(\tau')\in V$ and $\tau_2:=f'_{e'(\tau)}\circ j_\tau(\tau'')$ is a maximal extension of $\tau_1$, whence $$g(\theta(\tau'), f(\tau'')) \geqslant g(\theta'(\tau_1), f_{e'(\tau)}(\tau_2))\geqslant \ee-\delta,$$ and $(ii)$ is satisfied for the unit $U$.    

\end{proof}

For $\xi\in \ord$ and $n\in \nn$, let us say that $P\subset \Sigma_{\xi, n}$ is \emph{unital} provided that for any unit $U$ in $\Omega_{\xi, n}$, either $MAX(U)\subset P$ or $P\cap MAX(U)=\varnothing$.  Note that the complement of a unital set is also unital.

\begin{lemma} For any $\xi\in \emph{\ord}$ and $n\in \nn$, if $P,Q\subset \Sigma_{\xi, n}$ is a partition of $\Sigma_{\xi, n}$ into unital subsets, then there exist $p,q\in \nn_0$ with $p+q=n$ and level prunings $(\theta, f):\Omega_{\xi, p}\to \Omega_{\xi, n}$,  $(\phi, f'):\Omega_{\xi, q}\to \Omega_{\xi, n}$ such that $f(\Sigma_{\xi, p})\subset P$ and $f'(\Sigma_{\xi, q})\subset Q$. As in Lemma \ref{jobdone}, we agree that $\varnothing = \Omega_{\xi, 0}$. 

\label{easy3}

\end{lemma}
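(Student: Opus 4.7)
I plan to prove this by induction on $n\in\nn$, uniformly in $\xi\in\ord$. For the base case $n=1$, the entire tree $\Omega_{\xi,1}=\Omega_\xi$ consists of a single unit, so unitality forces either $P=\Sigma_{\xi,1}$ (in which case take $p=1$, $q=0$ with the identity level pruning on $\Omega_\xi$) or $Q=\Sigma_{\xi,1}$ (symmetric choice). The convention $\Omega_{\xi,0}=\varnothing$ renders the conclusion for the index $0$ vacuous.

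For the inductive step, assume the result for $n-1$ and let $P,Q$ be a unital partition of $\Sigma_{\xi,n}$. The first unit of $\Omega_{\xi,n}$ has maximal set $\Sigma_{\xi,n,1}$, which by unitality lies entirely in one of the two parts; by symmetry I may assume $\Sigma_{\xi,n,1}\subset P$. For each $\tau\in\Sigma_{\xi,n,1}$, the subtree $A_\tau=\{\sigma\in\Omega_{\xi,n}:\tau\prec\sigma\}$ is naturally identifiable with $\Omega_{\xi,n-1}$, and under this identification $P\cap A_\tau$ and $Q\cap A_\tau$ form a unital partition of $\Sigma_{\xi,n-1}$, since the units inside $A_\tau$ are precisely those units of $\Omega_{\xi,n}$ lying above $\tau$. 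The inductive hypothesis then supplies $p_\tau,q_\tau\in\nn_0$ with $p_\tau+q_\tau=n-1$ together with level prunings $(\theta_\tau,f_\tau)$, $(\phi_\tau,f'_\tau)$ witnessing the two colors inside $A_\tau$.

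To make the pair $(p_\tau,q_\tau)$ independent of $\tau$, I apply Lemma \ref{easylemma} to the finite coloring $h:MAX(\Omega_\xi)\to\{0,1,\ldots,n-1\}$ given by $h(\sigma)=p_\sigma$ (under the natural identification $\Sigma_{\xi,n,1}\cong MAX(\Omega_\xi)$). This yields an extended pruning $(\theta_0,e_0):\Omega_\xi\to\Omega_\xi$ and $p^*\in\{0,\ldots,n-1\}$ such that $p_{e_0(\sigma)}=p^*$ for every $\sigma\in MAX(\Omega_\xi)$; set $q^*=n-1-p^*$, $p=1+p^*$, $q=q^*$, so $p+q=n$. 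For the $P$-witnessing pruning $(\theta,f):\Omega_{\xi,1+p^*}\to\Omega_{\xi,n}$, I use $(\theta_0,e_0)$ on the first level $\Lambda_{\xi,1+p^*,1}$ (sending $\Sigma_{\xi,1+p^*,1}$ into $\Sigma_{\xi,n,1}\subset P$), and for each first-level maximal sequence $\tau'$, I compose with $(\theta_{e_0(\tau')},f_{e_0(\tau')})$ on the subtree above $\tau'$ to send $\Omega_{\xi,p^*}$ into $A_{e_0(\tau')}$ with leaves in $P$. For the $Q$-witnessing pruning (when $q^*>0$), I simply pick any single $\tau$ in the image of $e_0$ and compose $(\phi_\tau,f'_\tau)$ with the natural identification $\Omega_{\xi,n-1}\cong A_\tau\subset\Omega_{\xi,n}$, landing $\Omega_{\xi,q^*}$ inside $A_\tau$ with leaves in $Q$; if $q^*=0$ the conclusion is vacuous. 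The main hurdle here is purely bookkeeping: one must verify that each composition—first-level extended pruning glued to subtree-level prunings, and inductive level pruning composed with a natural identification into a wedge—actually satisfies the three axioms of a level pruning into $\Omega_{\xi,n}$. This follows from the paper's observation that natural identifications of $\Omega_{\xi,m}$ with wedges of $\Omega_{\xi,n}$ are themselves level prunings and that composition of level prunings remains a level pruning.
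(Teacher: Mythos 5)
Your proof is correct and follows the paper's induction on $n$ almost exactly: by unitality the first level $\Sigma_{\xi,n,1}$ is monochromatic, the inductive hypothesis is applied to the wedges $A_\tau$ above the first-level leaves, and the resulting prunings are glued. The one place you genuinely diverge is in reconciling the varying pairs $(p_\tau,q_\tau)$: you stabilize $p_\tau$ to a constant $p^*$ by applying Lemma \ref{easylemma} to the finite coloring $\tau\mapsto p_\tau$ of $\Sigma_{\xi,n,1}\cong MAX(\Omega_\xi)$, whereas the paper takes $p=1+\min_\tau p_\tau$ and $q=\max_\tau q_\tau$ and, for the $P$-pruning, identifies the wedge above each first-level leaf (a copy of $\Omega_{\xi,p-1}$) with the first $p-1$ levels of $\Omega_{\xi,p_{f(\tau)}}$, which is possible since $p-1\leqslant p_{f(\tau)}$. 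Both devices work and are non-circular (Lemma \ref{easylemma} is proved independently of this lemma); the paper's version avoids an extra appeal to Lemma \ref{easylemma} at the cost of the level-truncation identification, while yours gets an exact match of parameters on every wedge. Your treatment of the $Q$-pruning, using a single wedge in the image of $e_0$ where $q_\tau=q^*$, mirrors the paper's choice of a single $\tau$ attaining $\max_\tau q_\tau$, and the remaining verifications you defer to the paper's observations (natural identifications into wedges are level prunings, and compositions of level prunings are level prunings) are exactly the ones the paper itself relies on.
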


\begin{proof} We induct on $n\in\nn$. Since $\Omega_{\xi,1}$ has only one unit, for $n=1$, either $\Sigma_{\xi, 1}=P$ or $\Sigma_{\xi, 1}=Q$.   In the first case, we take $p=1$ and let $(\theta, f)$ be the identity.  Otherwise we let $q=1$ and let $(\phi, f')$ be the identity.  

Assume the result holds for a given $n$ and let $P, Q$ be a partition of $\Sigma_{\xi, n+1}$ into unital subsets.  For each $\tau\in \Sigma_{\xi, n+1, 1}$, identify $B_\tau=\{\tau'\in \Omega_{\xi, n+1}: \tau\prec \tau'\}$ with $\Omega_{\xi, n}$ and note that this identification also identifies $P\cap B_\tau$, $Q\cap B_\tau$ with a partition of $\Sigma_{\xi, n}$ into unital sets.  Using this identification and the inductive hypothesis, we find $p_\tau, q_\tau\in \nn_0$ with $p_\tau+q_\tau=n$ and level prunings $(\theta_\tau, f_\tau):\Omega_{\xi, p_\tau}\to B_\tau$, $(\phi_\tau, f'_\tau):\Omega_{\xi, q_\tau}\to B_\tau$ such that $f_\tau(\Sigma_{\xi, p_\tau})\subset B_\tau \cap P$ and $f'_\tau(\Sigma_{\xi, q_\tau})\subset B_\tau \cap Q$.   Note that either $\Sigma_{\xi, n+1, 1}\subset P$ or $\Sigma_{\xi, n+1, 1}\subset Q$, since $\Lambda_{\xi, n+1, 1}$ is a single unit.  We assume that $\Sigma_{\xi, n+1, 1}\subset P$, with the other case being identical.  Let $p=1+\min_{\tau\in \Sigma_{\xi, n+1,1}} p_\tau$ and $q=\max_{\tau\in \Sigma_{\xi, n+1, 1}} q_\tau=n+1-p$.   We define $(\phi, f'):\Omega_{\xi, q}\to \Omega_{\xi, n+1}$ by setting it equal to $(\phi_\tau, f'_\tau)$ for some $\tau\in \Sigma_{\xi, n+1, 1}$ such that $q_\tau=q$. Of course, by this construction, $f_\tau(\Sigma_{\xi, q})\subset Q$.  Next, we define $(\theta, f):\Omega_{\xi, p}\to \Omega_{\xi, n+1}$.   Let $\theta|_{\Lambda_{\xi, p, 1}}$ be the natural  identification of  $\Lambda_{\xi, p, 1}$ with $\Lambda_{\xi, n+1, 1}$ and let $f|_{\Sigma_{\xi, p, 1}}$ be the natural identification of $\Sigma_{\xi, p, 1}$ with $\Sigma_{\xi, n+1,1}$.  If $p=1$, this completes the definition of $(\theta, f)$, and $f(\Sigma_{\xi, p})=f(\Sigma_{\xi, 1,1})\subset P$.  Otherwise fix $\tau\in \Sigma_{\xi, p, 1}$, let $C_\tau=\{\tau'\in \Omega_{\xi, p}: \tau\prec \tau'\}$, and let $j_\tau:C_\tau\to \Omega_{\xi, p_{f(\tau)}}$ be the natural identification of $C_\tau$ (which can be naturally identified with $\Omega_{\xi, p-1}$) with the first $p-1$ levels of $\Omega_{\xi, p_{f(\tau)}}$.  This may be done, since $p-1\leqslant p_{f(\tau)}$.   Then let $\theta|_{C_\tau}= \theta_{f(\tau)}\circ j_\tau$ and $f|_{C_\tau\cap \Sigma_{\xi,p}}= f_{f(\tau)}\circ j_\tau$.   Suppose that $\tau'\in \Sigma_{\xi, p}$.  If $\tau'\in \Sigma_{\xi, p,1}$, $f(\tau')\in P$ since $f(\Sigma_{\xi, p,1})=\Sigma_{\xi, n+1,1}\subset P$.  Otherwise there exists $\tau\in \Sigma_{\xi, p, 1}$ such that $\tau\prec \tau'$, and $\tau'\in C_\tau$.  Then $j_\tau(\tau')\in C_{f(\tau)}\cap \Sigma_{\xi, n+1}$, and our choice of $f_{f(\tau)}$ gives that $f(\tau')\in f_{f(\tau)}(C_{f(\tau)}\cap \Sigma_{\xi, n+1})\subset P$.   

\end{proof}

\begin{proof}[Proof of Lemma \ref{jobdone}] Suppose $\xi\in \ord$ is such that for every $g:\Omega_\xi\to [-1,1]$ and $\ee\in \rr$, $g$ is either $\ee$-large or $\ee$-small.    Fix $\ee\in \rr$, $\delta>0$, $n\in \nn$, and $g:\Pi \Omega_{\xi, n}\to [-1,1]$.  Fix a level pruning $(\theta'', f'')$ as in Lemma \ref{easy2}.   We define $P\subset \Sigma_{\xi, n}$ as follows: For a unit $U$ in $\Omega_{\xi, n}$,  $MAX(U)\subset P$ if for every $\tau'\in U$ and every extension $\tau''\in MAX(\Omega_{\xi,n})$ of $\tau'$, $g(\theta''(\tau'), f''(\tau''))\geqslant \ee-\delta$, and otherwise $MAX(U)\cap P=\varnothing$.    Let $Q=\Sigma_{\xi, n}\setminus P$. Then $P,Q$ is a partition of $\Sigma_{\xi, n}$ into unital subsets.  Note that the conclusion of Lemma \ref{easy2} implies that for any unit $U$ such that $MAX(U)\cap P=\varnothing$, for any $\tau\in MAX(U)$ and any maximal extension $\tau''\in MAX(\Omega_{\xi, n})$ of $\tau$, $\sum_{\pi_{\xi, n}(f''(\tau))\prec \tau'\preceq f''(\tau)}\mathbb{P}_\xi(\iota_{\xi, n}(\tau'))g(\tau', f''(\tau''))<\ee$.   Let $p,q$, $(\phi_p, f_p)$, and $(\phi_q, f_q)$ be as in the conclusion of Lemma \ref{easy3}.  Let $e_p:MAX(\Omega_{\xi, p})\to MAX(\Omega_{\xi, n})$ be such that for any $\tau\in MAX(\Omega_{\xi, p})$, $f_p(\tau)\preceq e_p(\tau)$, where we omit this step if $p=0$.   Let $\theta:\Omega_{\xi, p}\to \Omega_{\xi, n}$ be given by $\theta''\circ \phi_p$ and let $e:MAX(\Omega_{\xi, p})\to MAX(\Omega_{\xi, n})$ be given by $e=f''\circ e_p$.  Fix any $e_q:MAX(\Omega_{\xi, q})\to MAX(\Omega_{\xi, q})$ such that for any $\tau\in MAX(\Omega_{\xi, q})$, $f_q(\tau)\preceq e_q(\tau)$.      Define $\phi=\theta''\circ \phi_q$, $f'=f''\circ f_q$, and $e'= f''\circ e_q$.

\end{proof}

\end{document}